\newtheorem{theorem}{Theorem}[section]
\newtheorem{lemma}[theorem]{Lemma}
\theoremstyle{definition}
\newtheorem{exm}[theorem]{Example}
\newtheorem{problem}[theorem]{Problem}
\newtheorem{proposition}[theorem]{Proposition}
\newtheorem{corollary}[theorem]{Corollary}% add
\newcommand{\Ric}{\operatorname{Ric}}
\newcommand{\R}{\mathbb{R}}
\theoremstyle{remark}
\newtheorem{remark}[theorem]{Remark}
\numberwithin{equation}{section}
\begin{document}

% \title[short text for running head]{full title}
\title[Llarull type theorems on complete manifolds with PSC]{Llarull type theorems on complete manifolds with positive scalar curvature}

%    Only \author and \address are required; other information is
%    optional.  Remove any unused author tags.

%    author one information
% \author[short version for running head]{name for top of paper}
\author{Tianze Hao}
\address{Key Laboratory of Pure and Applied Mathematics,
School of Mathematical Sciences, Peking University, Beijing, 100871, P. R. China
}
\email{haotz@pku.edu.cn}

\author{Yuguang Shi}
\address{Key Laboratory of Pure and Applied Mathematics,
School of Mathematical Sciences, Peking University, Beijing, 100871, P. R. China
}
\email{ygshi@math.pku.edu.cn}

\author{Yukai Sun}
\address{Key Laboratory of Pure and Applied Mathematics,
School of Mathematical Sciences, Peking University, Beijing, 100871, P. R. China
}
\email{sunyukai@math.pku.edu.cn}
\thanks{T. Hao, Y. Shi and Y. Sun are funded by the National Key R\&D Program of China Grant 2020YFA0712800.}
%\author{}
%\address{}
%\curraddr{}
%\email{}
%\thanks{}

%    author two information
%\author{}
%\address{}
%\curraddr{}
%\email{}
%\thanks{}

%    \subjclass is required.
\subjclass[2010]{Primary 53C21, secondary 53C24 }

\date{}

\dedicatory{Scalar curvature, $\mu$-bubble, Llarull's Theorem}

%    Abstract is required.
\begin{abstract}
In this paper, without   assuming that manifolds  are spin,  we prove that if a compact  orientable, and connected Riemannian  manifold $(M^{n},g)$ with scalar curvature $R_{g}\geq 6$ admits a non-zero degree and $1$-Lipschitz map to $(\mathbb{S}^{3}\times \mathbb{T}^{n-3},g_{\mathbb{S}^{3}}+g_{\mathbb{T}^{n-3}})$, for $4\leq n\leq 7$, then $(M^{n},g)$ is locally isometric to $\mathbb{S}^{3}\times\mathbb{T}^{n-3}$. Similar results are established for noncompact cases as $(\mathbb{S}^{3}\times \mathbb{R}^{n-3},g_{\mathbb{S}^{3}}+g_{\mathbb{R}^{n-3}})$ being model spaces (see Theorem \ref{noncompactrigidity1}, Theorem \ref{noncompactrigidity2}, Theorem \ref{noncompactrigidity3}, Theorem \ref{noncompactrigidity4}).
We observe that the results differ significantly when $n=4$ compared to $n\geq 5$.  Our results imply that the $\epsilon$-gap length extremality of the standard $\mathbb{S}^3$ is stable under the Riemannian product with $\mathbb{R}^m$, $1\leq m\leq 4$ (see $D_{3}$. Question in Gromov's paper \cite{Gromov2017}, p.153). \end{abstract}

\maketitle
%%%%%%%%%%%%%%%%%%
\tableofcontents
\section{Introduction}

 It is commonly believed that for some Riemannian manifolds $(M^n, g)$ with positive scalar curvature (PSC) metrics, it is not possible to increase the scalar curvature and while simultaneously enlarging the manifolds in all directions. The main results in work of Llarull \cite{LM1998} and Gromov and Zhu \cite{gromov2021area} align with this notion, more detailed discussions can be found in a survey paper of Gromov \cite{Gr2023} and references therein.  In this paper, we aim to investigate similar phenomena observed in some manifolds endowed with PSC metrics. For instance, if we consider the  manifold $(\mathbb{S}^3\times N^{n-3}, g)$ with a metric $g$ with its scalar curvature $R_g\geq 6$,  here $N^{n-3}$ is either $\mathbb{T}^{n-3}$ or $\mathbb{R}^{n-3}$ which is enlargeable in the sense of Gromov and Lawson \cite{GL1980}. Then according to the reasons described above, we know the factor $\mathbb{S}^3$ with the induced metric should not spread in all direction. Indeed, the following theorem can be regarded as an evidence to support this assertion,  namely, we want to show the following:

\begin{theorem}\label{comprigidity1}	
Let $(M^n, g)$ be a $n$-dimensional compact(without boundary) complete orientable connected Riemannian manifold with $R_g\geq 6$, $4\leq n\leq 7$,  we assume that  there is a non-zero degree and $1$-Lipschitz  map $f:M^n\to \mathbb{S}^3\times \mathbb{T}^{n-3}$,	then  $(M^n, g)$ is locally isometric to $\mathbb{S}^3\times \mathbb{T}^{n-3}$. Moreover,  for any $x\in\mathbb{T}^{n-3} $, $P \circ f(\cdot,x): \mathbb{S}^3 \to \mathbb{S}^3 $ is isometric, here $P$ denotes the standard projection of $P:\mathbb{S}^3\times \mathbb{T}^{n-3} \to \mathbb{S}^3 $.
\end{theorem}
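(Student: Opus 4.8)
The plan is to reduce the dimension by peeling off the $\mathbb{T}^{n-3}$ factor one circle at a time using stable minimal hypersurfaces (equivalently, $\mu$-bubbles), until we reach a closed orientable $3$-manifold equipped with a nonzero degree $1$-Lipschitz map to $\mathbb{S}^3$; there every orientable $3$-manifold is spin, so the Dirac operator twisted by the pullback of the spinor bundle of $\mathbb{S}^3$ is available and Llarull's rigidity theorem applies. The equality case must then be transported back up through the slices.

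\emph{Descent.} Write $f=(F,u)$ with $F\colon M\to\mathbb{S}^3$ and $u\colon M\to\mathbb{T}^{n-3}$. Since $\deg f\ne 0$, the de Rham class $F^{*}\omega_{\mathbb{S}^3}\smile u^{*}(\mathrm dt_{1})\smile\cdots\smile u^{*}(\mathrm dt_{n-3})$ is nonzero in $H^{n}(M;\mathbb R)$; in particular $u^{*}(\mathrm dt_{n-3})\ne 0$ in $H^{1}(M;\mathbb R)$. Let $\Sigma_{1}^{n-1}\subset M$ be a $\mu$-bubble (or area-minimizing hypersurface) dual to $u^{*}(\mathrm dt_{n-3})$; since $n\le 7$ it is smooth, closed and stable. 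The restriction of $f$ followed by the collapse of the $(n-3)$-rd circle is a $1$-Lipschitz map $\Sigma_{1}\to\mathbb{S}^3\times\mathbb{T}^{n-4}$ whose degree equals $\deg f$ (by the cup-product identity above), hence is nonzero. Iterating $n-3$ times produces a closed orientable $3$-manifold $N$ carrying a nonzero degree $1$-Lipschitz map $\bar F\colon N\to\mathbb{S}^3$; restriction to a submanifold with the induced metric preserves the $1$-Lipschitz property at every step.

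\emph{Carrying the scalar curvature bound.} For a stable minimal hypersurface $\Sigma$ in $(X,g_{X})$, the second variation together with the Gauss equation gives, for every $\psi\in C^{\infty}(\Sigma)$,
\begin{equation*}
\int_{\Sigma}\bigl(2|\nabla\psi|^{2}+R_{g_{\Sigma}}\psi^{2}\bigr)\ \ge\ \int_{\Sigma}R_{g_{X}}\,\psi^{2}.
\end{equation*}
So $R_{g}\ge 6$ on $M$ yields $\lambda_{1}(-2\Delta_{\Sigma_{1}}+R_{g_{\Sigma_{1}}})\ge 6$; carrying this \emph{spectral} bound, with the \emph{sharp} constant $6$, through the remaining reductions is the delicate point, handled by choosing the prescribed mean curvature of each successive $\mu$-bubble in terms of the first eigenfunction of the preceding operator (as in the spectral band-width estimates). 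One ends with $\lambda_{1}(-2\Delta_{N}+R_{g_{N}})\ge 6$ on the spin $3$-manifold $N$. Forming the Dirac operator on $N$ twisted by $\bar F^{*}(\text{spinor bundle of }\mathbb{S}^3)$ and weight-modified by the first eigenfunction, the Lichnerowicz--Weitzenb\"ock formula, the $1$-Lipschitz hypothesis (which bounds the twisting curvature term precisely by $6$), and the nonvanishing of the relevant index force a nontrivial twisted-parallel spinor and hence equality throughout: the eigenfunction is constant, $R_{g_{N}}\equiv 6$, and $(N,g_{N})\cong\mathbb{S}^3$ with $\bar F$ an isometry.

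\emph{Rigidity upward and the slice conclusion.} Equality at the bottom forces equality in every inequality used: each slice is totally geodesic with vanishing ambient normal Ricci curvature, the eigenfunctions are constant, and $R_{g}\equiv 6$ along the slices. The foliation argument of the equality case in Gromov's band inequalities then produces a local Riemannian splitting $M\cong\mathbb{S}^3\times\mathbb{T}^{n-3}$, which spreads over all of $M$ by the real-analyticity forced on $g$; and $P\circ f(\cdot,x)\colon\mathbb{S}^3\to\mathbb{S}^3$ is an isometry for every $x$ because, restricted to each slice, it is the isometry produced by the $3$-dimensional step. The principal difficulty is the middle step: keeping the optimal constant $6$ — not merely positivity — while descending in codimension one, and setting up the bottom $3$-dimensional estimate so that its rigidity case is strong enough to reconstruct the full geometry. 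This is also where the behaviour for $n=4$ (a single reduction, in which the sharp constant is inherited at once) differs from $n\ge 5$ (several reductions, where the spectral constant must be propagated repeatedly).
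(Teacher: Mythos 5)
Your broad strategy --- descend to a closed orientable $3$-manifold carrying a nonzero-degree $1$-Lipschitz map to $\mathbb{S}^3$ via stable hypersurfaces/$\mu$-bubbles, apply a Dirac-operator rigidity statement at the bottom, then propagate the equality case back up by a foliation argument --- is the same as the paper's. The bookkeeping in the descent differs: you propagate the scalar spectral bound $\lambda_1(-2\Delta+R)\geq 6$ from slice to slice, whereas the paper carries along warped $\mathbb{T}^k$ factors with eigenfunction warping functions (the classical Schoen--Yau / Gromov--Lawson torical symmetrization, as in Zhu's work). These are roughly interchangeable, and the integral inequality you quote from stability is correct, but the paper's version is what makes its single base lemma (Proposition~\ref{Prop-rigid}) directly applicable.

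The genuine gap is in your bottom step. You propose to ``form the Dirac operator on $N$ twisted by $\bar F^{*}(\text{spinor bundle of }\mathbb{S}^3)$'' and appeal to ``the nonvanishing of the relevant index.'' This cannot work as stated, for two independent reasons. First, $N$ is a closed $3$-manifold, hence odd-dimensional: the Fredholm index of any Dirac-type operator on a closed odd-dimensional manifold vanishes identically, so there is no index obstruction available. Second, even if one passes to even dimension by a product with $\mathbb{S}^1$, the natural target $\mathbb{S}^3\times\mathbb{S}^1$ has Euler characteristic zero, so the Chern-character computation $\{\mathrm{ch}(E^+)\hat A\}[\hat M]$ that powers Llarull's argument would again give zero. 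The paper's Proposition~\ref{Prop-rigid} addresses both issues at once: it multiplies $\Sigma^3\times\mathbb{T}^{n-3}$ by extra circles $\mathbb{S}^1_{r_0},\dots,\mathbb{S}^1_{r_{n-3}}$ to land in even dimension, and composes with a $1$-Lipschitz smash-product map onto $\mathbb{S}^4\times(\mathbb{S}^2)^{n-3}$, which \emph{does} have nonzero Euler characteristic; the $r_i\to\infty$ limit then kills the contribution of the auxiliary circle directions to the twisted-curvature term. Your proposal skips this entirely; to close it you would need to reconstruct (or cite) precisely this proposition. Note also that your ``weight-modified by the first eigenfunction'' remark is really the $\mathbb{S}^1$-warping in disguise --- the warped metric $g_\Sigma+u^2\,d\theta^2$ converts $\lambda_1(-2\Delta+R)\geq 6$ on $\Sigma^3$ into $R\geq 6$ on the $4$-manifold $\Sigma^3\times\mathbb{S}^1$ --- so the correct move is to embrace the dimension increase, not to try to stay in dimension $3$. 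Finally, the upward rigidity step is sketched too briefly: the paper's argument requires constructing a CMC foliation around the minimizer, ruling out nonzero mean curvature via a brane functional and a second application of Proposition~\ref{Prop-rigid}, and then deriving the local splitting from the resulting totally geodesic, Ricci-flat-normal slices; appealing to real analyticity of $g$ is not justified since $g$ is not assumed analytic.
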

\begin{remark}
    In fact, in Theorem \ref{comprigidity1}, we  are able to demonstrate  the standard unit sphere $\mathbb{S}^{3}$ can be isometrically embedding into $(M^{n},g)$, and there is a tubular neighborhood of this $\mathbb{S}^{3}$ in $M^{n}$ which is isometric to that of $\mathbb{S}^{3}$ in $\mathbb{S}^3\times \mathbb{T}^{n-3}$.
\end{remark}

 An affirmative answer to the following problem would be a direct generalization of Theorem \ref{comprigidity1} to  noncompact cases.

 \begin{problem}\label{noncompactprob1}
Let $(M^n, g)$ be a  noncompact orientable connected and complete Riemannian manifold with scalar curvature $R_g\geq 6 $, $f:M^n\to \mathbb{S}^3\times \mathbb{R}^{n-3}$ be a proper  and $1$-Lipschitz  map with non-zero degree.	 Is $(M^n, g)$ (locally) isometric to $\mathbb{S}^3\times \mathbb{R}^{n-3}$?
\end{problem}

However, the noncompact situations are much more complicated than those of compact cases. Indeed, the answer to Problem \ref{noncompactprob1} may be negative if $n\geq 5$. More specifically, we have the following example:

\begin{exm}\label{example1}
Let $\Sigma^{n}$, $n\geq 2$,  be a paraboloid of revolution in $\mathbb{R}^{n+1}$ which is also a graph of $\mathbb{R}^{n}\subset\mathbb{R}^{n+1}$, $P:\Sigma^{n}\to \mathbb{R}^{n}$ denotes the restriction of the standard projection in $\mathbb{R}^{n+1}$ to $\mathbb{R}^{n}$. Let $(M^{n+m},g)$ be the Riemannian product of $\Sigma^{n} $ and $\mathbb{S}^m$, $m\geq 2$, note that its scalar curvature  $R_g>m(m+1)$. Then $f:=(id, P):M^{n+m}\to \mathbb{S}^m\times \mathbb{R}^{n} $ is a 	proper  and $1$-Lipschitz  map with non-zero degree.\end{exm}

Example \ref{example1} illustrates  that, in $\times \mathbb{R}^m$-Stabilized Mapping Theorem in p.139 in Gromov's survey paper \cite{Gr2023},  there could be no $x$ in $M$ with
$$R_g(x)\leq \|\Lambda^2df(x)\|\cdot R_{\bar g}(f(x)),$$
while there holds (c.f.Non-compact Extremality Theorem in p.317 of \cite{Gr2023} )
$$\inf_{x\in M}(R_g(x)-\|\Lambda^2df(x)\|\cdot R_{\bar g}(f(x)))=0, $$
for definition of $\Lambda^2df(x)$, see p.126 in \cite{Gr2023}.

 While the answer to the Problem \ref{noncompactprob1} is affirmative under an extra condition when $n=4$. i.e., we are able to show that

\begin{theorem}\label{noncompactrigidity1}
Let $(M^4, g)$ be a  noncompact orientable connected and complete Riemannian manifold with scalar curvature $R_g\geq 6 $, $f:M^4\to \mathbb{S}^3\times\mathbb{R}$ be a proper  and $1$-Lipschitz  map with non-zero degree.	 Then $(M^4, g)$ is isometric to $\mathbb{S}^3\times\mathbb{R}$ provided $(M^4, g)$ is of bounded geometry, i.e. $\sup_{M}\|Rm\|<\infty$ and its injective radius $inj(M^n, g)>0$.
\end{theorem}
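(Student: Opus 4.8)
The plan is to reduce the problem, by means of a $\mu$-bubble, to a Llarull-type rigidity statement on a closed $3$-manifold, and then to use the bounded geometry hypothesis to spread the resulting rigid structure over all of $M^4$. First, write $f=(\bar f,u)$ where $\bar f=P\circ f:M^4\to\mathbb S^3$ and $u:M^4\to\mathbb R$ is $f$ followed by the projection onto the $\mathbb R$-factor; since $f$ is proper and $1$-Lipschitz, $u$ is proper and $1$-Lipschitz and $\bar f$ is $1$-Lipschitz. After a small perturbation we may take $\pm T$ to be regular values of $u$, and then by properness of $u$ the slab $N_T:=u^{-1}([-T,T])$ is a nonempty compact manifold with boundary $u^{-1}(-T)\sqcup u^{-1}(T)$ (note $u$ is nonconstant, else $u^{-1}(\mathrm{pt})=M^4$ would not be compact).

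Next, inside $N_T$ I would produce a $\mu$-bubble: minimize the functional $\mathcal A(\Omega)=\mathcal H^3(\partial^*\Omega)-\int_\Omega h\,d\mathcal H^4$ over Caccioppoli sets $\Omega$ with $\{u<-T\}\subset\Omega\subset\{u<T\}$, where $h=h(u)$ is a warping function chosen so that $u^{-1}(\pm T)$ act as barriers (for the model $\mathbb S^3\times\mathbb R$ the slices are minimal, so one takes $h$ small and lets it tend to $0$). Since $\dim\Sigma=3\le 7$, a minimizer $\Sigma^3$ exists with smooth closed reduced boundary, prescribed mean curvature $h$, and, by the second variation of $\mathcal A$ together with the Gauss equation, the stability inequality $\int_\Sigma\bigl(2|\nabla\phi|^2+R_\Sigma\phi^2\bigr)\ge\int_\Sigma(R_g+|A|^2)\phi^2+(\text{terms involving }h)$ for all $\phi\in C^\infty(\Sigma)$. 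Moreover $\Sigma$ is homologous in $N_T$ to a regular slice $u^{-1}(t)$, $t\in(-T,T)$, so $\deg(\bar f|_\Sigma:\Sigma^3\to\mathbb S^3)=\deg(\bar f|_{u^{-1}(t)})=\deg f\neq 0$.

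On $\Sigma^3$ I would then run a Llarull-type argument. Using $R_g\ge 6$ and $|A|^2\ge 0$ and letting $h\to 0$, the first eigenfunction $w>0$ of the stability operator satisfies $-2\Delta_\Sigma w+R_\Sigma w\ge 6w$ pointwise; since a closed orientable $3$-manifold is spin and $\bar f|_\Sigma$ is a $1$-Lipschitz map of nonzero degree onto $\mathbb S^3$, one invokes Llarull's rigidity \cite{LM1998} in the form adapted to this spectral datum, i.e. the twisted Dirac operator of $\bar f|_\Sigma$ on $(\Sigma,g_\Sigma)$ combined with the potential $w$ in a weighted twisted-Dirac (Weitzenböck) argument. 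Here a naive conformal change does not suffice, since in dimension $3$ it cannot simultaneously preserve $R\ge 6$ and the area-non-increasing property of $\bar f|_\Sigma$; the weight $w$ must be carried through the Bochner identity. The conclusion is that $\Sigma$ is isometric to the round $\mathbb S^3$, that $\bar f|_\Sigma$ is a Riemannian isometry, and that every inequality above is an equality; unwinding the equality cases forces $w$ constant, $h\equiv 0$, $A\equiv 0$ and $R_g\equiv 6$ along $\Sigma$, and the standard rigidity of a totally geodesic minimal slice then yields a tubular neighborhood of $\Sigma$ in $M^4$ isometric to $\mathbb S^3\times(-\delta,\delta)$.

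Finally, since the value $T$ and the level at which the $\mu$-bubble is produced are arbitrary, the set $G\subset M^4$ of points possessing a neighborhood isometric to a product $\mathbb S^3\times(\text{interval})$ of round unit spheres is nonempty and open. The bounded geometry hypothesis $\sup_M\|Rm\|<\infty$, $inj(M^4,g)>0$ provides uniform curvature and second-fundamental-form estimates for the $\mu$-bubbles, hence a uniform lower bound on the width $\delta$ of the product neighborhoods, which makes $G$ closed; connectedness of $M^4$ gives $G=M^4$. Thus $M^4$ is foliated by parallel totally geodesic round $\mathbb S^3$'s whose leaf space is a connected $1$-manifold, which is $\mathbb R$ since $M^4$ is noncompact and $u$ is proper; hence $(M^4,g)$ is isometric to $\mathbb S^3\times\mathbb R$. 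I expect this last globalization step to be the main obstacle: nothing intrinsic forces a single rigid slice to spread, and Example \ref{example1} shows the statement genuinely fails for $n\ge 5$ without control at infinity. The bounded geometry assumption is exactly what makes the local product structures obtained in the previous steps uniform, so that they glue and do not degenerate as one exhausts $M^4$.
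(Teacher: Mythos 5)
Your overall strategy — produce a $\mu$-bubble, invoke Llarull-type rigidity on the resulting closed $3$-manifold, and then globalize using bounded geometry — is the right one, but both the $\mu$-bubble construction and the globalization step differ from the paper's, and the first of these has a genuine gap.

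\textbf{The barrier mechanism is wrong.} You propose to work in a fixed slab $N_T=u^{-1}([-T,T])$ and choose $h=h(u)$ so that the slices $u^{-1}(\pm T)$ act as barriers, ``taking $h$ small and letting it tend to $0$.'' But in an arbitrary $(M^4,g)$ the mean curvatures of the level sets $u^{-1}(\pm T)$ are not controlled at all (they are only Lipschitz level sets of a proper $1$-Lipschitz function), so a small or vanishing $h$ gives no barrier whatsoever, and the minimizer can run into $\partial N_T$. The paper avoids this entirely by using Zhu's function $h_\epsilon$ (Lemma \ref{Lem-function-h}), which blows up to $\pm\infty$ at the ends of the interval $\left(-\frac{1}{4\epsilon},\frac{1}{4\epsilon}\right)$; this makes the barrier condition automatic regardless of the geometry of the ambient level sets. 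The $\mu$-bubbles $\Sigma_\epsilon$ then live in \emph{growing} slabs, and the price paid is that compactness of the limiting minimal hypersurface $\mathcal S$ (as $\epsilon\to 0$) must be proved. This is exactly where the paper uses bounded geometry: a uniform volume upper bound $\mathcal H^3(\Sigma_\epsilon)\le\Lambda$, combined with the monotonicity-type lower bound $\mathcal H^3(B_{r_0}(p)\cap\mathcal S)\ge C_{r_0}>0$ available under $\sup_M\|Rm\|<\infty$ and $inj>0$ (Meeks--Yau), forces $\mathcal S$ to be compact (Lemma \ref{Lem-compact-surface}).

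\textbf{The globalization step is different and, as you wrote it, sketchier than it needs to be.} You attempt an open-closed argument, using bounded geometry to get a uniform width $\delta$ of product neighborhoods. The paper instead re-uses the foliation argument of the compact case (Theorem \ref{comprigidity1}) to conclude that $(M^4,g)$ is locally isometric to $\mathbb S^3\times\mathbb R$, hence has nonnegative Ricci curvature, and then applies the Cheeger--Gromoll splitting theorem (noting that $M^4$ has at least two ends because $f$ is proper of nonzero degree onto $\mathbb S^3\times\mathbb R$). This is a cleaner route and sidesteps the delicate question you flagged yourself about whether the local product neighborhoods glue.

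\textbf{One smaller point.} Your Llarull step (``weighted twisted-Dirac argument on $\Sigma^3$ carrying the eigenfunction weight $w$ through Bochner'') is a reasonable description of what one needs, but the paper does not do this directly on $\Sigma^3$: it first forms the warped product $(\Sigma\times\mathbb S^1,\,g_\Sigma+u^2 dt^2)$, whose scalar curvature is $\ge 6$, and applies Proposition \ref{Prop-rigid} (a spin argument on the spin manifold $\Sigma\times\mathbb S^1$ twisted with pulled-back bundles over $\mathbb S^4\times\mathbb S^2$). Your inclination to absorb the first eigenfunction $w$ as a weight is fine in spirit — it is equivalent to the warped-product trick — but the paper packages it so that Proposition \ref{Prop-rigid} applies verbatim, which is what makes the equality case clean. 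In summary: the paper's route through Zhu's $h_\epsilon$, the compactness of the limiting minimal hypersurface via bounded geometry, and the splitting theorem are the three pieces you would need to replace in your sketch for it to close up.
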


 In contrast to Example \ref{example1}, the  theorem below gives a completely different pictures for a $4$-dimensional noncompact manifold with a PSC metric. To be precise:
\begin{theorem}\label{noncompactrigidity2}
Let $(M^4, g)$ be a  noncompact orientable connected and complete Riemannian manifold with scalar curvature $R_g> 6 $, then there is no proper  and $1$-Lipschitz  map  $f:M^4\to \mathbb{S}^3\times\mathbb{R}$ with non-zero degree.
\end{theorem}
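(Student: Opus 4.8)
The plan is to argue by contradiction, reducing the $4$-dimensional statement to Llarull's rigidity theorem \cite{LM1998} in dimension $3$ by means of a warped $\mu$-bubble. Working in dimension $4$ is essential here: a $\mu$-bubble is then a closed orientable $3$-manifold, hence spin, so Llarull's theorem applies to it unconditionally. So suppose $f=(f_1,h)\colon M^4\to\mathbb{S}^3\times\mathbb{R}$ is proper, $1$-Lipschitz and of non-zero degree, while $R_g>6$. Since $\pi_{\mathbb{R}}\colon\mathbb{S}^3\times\mathbb{R}\to\mathbb{R}$ is proper, $h:=\pi_{\mathbb{R}}\circ f$ is a proper $1$-Lipschitz function; in particular every band $N_L:=h^{-1}([-L,L])$ is compact, and for a regular value $c\in(-L,L)$ the degree formula gives $\deg\!\big(f_1|_{h^{-1}(c)}\big)=\deg f\neq0$, so at least one connected component of $h^{-1}(c)$ carries a degree-nonzero $1$-Lipschitz map to $\mathbb{S}^3$.

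First I would run the $\mu$-bubble construction inside $N_L$ for $L$ large. Choose $\phi_L\colon(-L,L)\to\mathbb{R}$ solving $\phi_L'=\tfrac12\phi_L^2+\epsilon_L$ with $\phi_L(\pm L^\mp)=\pm\infty$, where $\epsilon_L=\pi^2/(2L^2)$ (explicitly $\phi_L(t)=\tfrac{\pi}{L}\tan\tfrac{\pi t}{2L}$), so that $\epsilon_L\to0$ as $L\to\infty$. Minimize $\mathcal{A}(\Omega)=\mathcal{H}^3(\partial^*\Omega)-\int_{N_L}(\chi_\Omega-\chi_{\Omega_0})\,\phi_L(h)$ over Caccioppoli sets $\Omega\subset N_L^\circ$ agreeing with a fixed sublevel set $\Omega_0=h^{-1}([-L,c))$ outside a compact subset of $N_L^\circ$. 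The blow-up of $\phi_L\circ h$ along $\partial N_L$ confines minimizing sequences, and since $3\le 7$ the standard regularity theory produces a smooth minimizer whose boundary $\Sigma_L^3$ is a closed hypersurface homologous rel $\partial N_L$ to $h^{-1}(c)$; hence $\deg(f_1|_{\Sigma_L})\neq0$ and we may pass to a connected component $\Sigma\subset\Sigma_L$, a closed orientable $3$-manifold equipped with a degree-nonzero $1$-Lipschitz map $\Phi:=f_1|_\Sigma\colon\Sigma\to\mathbb{S}^3$.

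Next I would convert stability into scalar curvature. As a stable $\mu$-bubble with prescribed mean curvature $\psi:=\phi_L\circ h$, $\Sigma$ satisfies, for all $\varphi\in C^\infty(\Sigma)$,
\[
\int_\Sigma\Big(|\nabla\varphi|^2+\tfrac12R_\Sigma\varphi^2\Big)\ \ge\ \int_\Sigma\Big(\tfrac12|A|^2+\tfrac12R_g+\tfrac12\psi^2+\partial_\nu\psi\Big)\varphi^2,
\]
by the second variation formula together with the Gauss equation. Since $|\nabla h|\le1$ and $\tfrac12\phi_L^2-|\phi_L'|\equiv-\epsilon_L$ we get $\tfrac12\psi^2+\partial_\nu\psi\ge-\epsilon_L$, so $\int_\Sigma(8|\nabla\varphi|^2+R_\Sigma\varphi^2)\ge\int_\Sigma(R_g-2\epsilon_L)\varphi^2$, whence $\lambda_1(-8\Delta_\Sigma+R_\Sigma)\ge\min_\Sigma R_g-2\epsilon_L>6-2\epsilon_L$. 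Solving $(-8\Delta_\Sigma+R_\Sigma)u=\lambda_1u$ with $u>0$, $\max u=1$, and setting $\tilde g=u^4g|_\Sigma$, one obtains $\tilde g\le g|_\Sigma$ (so $\Phi$ remains $1$-Lipschitz for $\tilde g$) and $R_{\tilde g}=\lambda_1u^{-4}\ge\lambda_1$. If $\lambda_1>6$, then $(\Sigma,\tilde g)$ has $R_{\tilde g}>6$ and admits a degree-nonzero $1$-Lipschitz map to $\mathbb{S}^3$, which Llarull's theorem forbids (it would force $(\Sigma,\tilde g)$ to be the round $\mathbb{S}^3$ with $R_{\tilde g}\equiv6$). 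Thus everything reduces to arranging $\lambda_1>6$.

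The hard part will be precisely this last step, i.e.\ guaranteeing that for some sufficiently wide band the bubble $\Sigma_L$ sits where $R_g>6+2\epsilon_L$. When $\inf_M R_g>6$ it is immediate: take $L$ with $2\epsilon_L<\inf_M R_g-6$, so $\lambda_1\ge\min_{\Sigma_L}R_g-2\epsilon_L>6$. The genuinely delicate case is $\inf_M R_g=6$ (which can occur even though $R_g>6$ pointwise), where a priori the bubbles might escape to infinity along a direction in which $R_g\to6$ — indeed Llarull's inequality already forces $\min_{\Sigma_L}R_g\le6+2\epsilon_L$, which is consistent with escape. Here I would exploit the equality analysis of the estimate above: as $L\to\infty$ the $\mu$-bubbles become almost totally geodesic, the conformal factors $u_L$ tend to $1$, the metrics $g|_{\Sigma_L}$ become asymptotically round, and along $\Sigma_L$ the function $h$ becomes an almost-distance coordinate ($|\nabla h|\to1$, $\Sigma_L$ forced to be an almost-level set); a pointed limit of the bubbles with their collars and the maps $\Phi_L$ should then yield a limiting splitting isometric to $\mathbb{S}^3\times\mathbb{R}$ with scalar curvature identically $6$, contradicting $R_g>6$. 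Carrying out this limit with no bounded-geometry hypothesis on $M$ — using only the intrinsic control the bubbles inherit from being almost-round $3$-spheres — is the crux, and it is the reason the strict-inequality statement is genuinely stronger than, rather than a corollary of, Theorem \ref{noncompactrigidity1}.
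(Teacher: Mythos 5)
Your high-level plan — contradiction via a warped $\mu$-bubble whose boundary is a closed spin $3$-manifold, then a Llarull-type rigidity — matches the paper's strategy, and the observation that dimension $4$ is essential because the bubble is automatically spin is exactly the paper's point. However, there are two gaps, one of which is fatal.

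The fatal gap is the one you yourself flag as "the crux": the case $\inf_M R_g = 6$ while $R_g > 6$ pointwise. Your prescribed-mean-curvature function $\phi_L$ carries a \emph{uniform} deficit $\epsilon_L$ across the whole band, so the stability estimate only yields $\lambda_1 \ge \min_{\Sigma_L} R_g - 2\epsilon_L$; if the bubble escapes toward a region where $R_g \to 6$, this is useless, and you are left proposing a pointed-limit/splitting argument that is explicitly un-carried-out and, as you note, problematic without bounded geometry (indeed that hypothesis is precisely what Theorem~\ref{noncompactrigidity1} requires, and dropping it is what makes Theorem~\ref{noncompactrigidity2} a separate result). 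The paper avoids the escape problem entirely by a different choice of $\mu$, following Cecchini--R\"ade--Zeidler: first \emph{fix} a compact slab $K=\phi^{-1}([-T,T])$ with $R_g \ge 6+\delta_K$ on $K$ and pick $\kappa$ with $12\kappa < \delta_K/2$, then build $\mu_\kappa$ out of three pieces — two Riccati-critical profiles $\pm\tfrac{3}{2(\cdot)}$ for which $-\tfrac{4}{3}\mu^2 - 2\mu' \equiv 0$ on the unbounded sides, glued to a $\tan$-profile with deficit $\le 12\kappa$ only on the compact middle. Then at \emph{every} point of the bubble one has $R_g + \tfrac{4}{3}\mu_\kappa^2 + 2\mu_\kappa' > 6$: inside $K$ because $\delta_K > 12\kappa$, and outside $K$ because the $\mu$-deficit is nonpositive and $R_g > 6$ pointwise. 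No compactness or limiting argument is needed; the theorem then follows from the warped-product rigidity of Proposition~\ref{Prop-rigid}. This zero-deficit-outside-a-compactum construction is the missing idea in your sketch.

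The second, lesser issue is the conformal reduction to Llarull. You set $\tilde g = u^4 g|_\Sigma$ with $\max u = 1$ and claim "$\tilde g \le g|_\Sigma$, so $\Phi$ remains $1$-Lipschitz for $\tilde g$." That inference is backwards: shrinking the domain metric can only \emph{increase} the Lipschitz constant, since $1$-Lipschitz means $\Phi^* g_{\mathbb{S}^3} \le \tilde g$, which does not follow from $\Phi^* g_{\mathbb{S}^3} \le g$ and $\tilde g \le g$. Normalizing instead by $\min u = 1$ restores $1$-Lipschitzness but flips the scalar-curvature bound to $R_{\tilde g} = \lambda_1 u^{-4} \le \lambda_1$, which is the wrong direction. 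The standard way around this is not a conformal change but the warped product $\bar g = g|_\Sigma + u^2\,d\theta^2$ on $\Sigma\times\mathbb{S}^1$, which keeps the slice metric untouched (hence $1$-Lipschitzness is preserved) while converting the eigenvalue gap into a scalar-curvature gap $R_{\bar g} > 6$; one then needs a Llarull-type theorem for the product $\mathbb{S}^3\times\mathbb{S}^1$, which is exactly what Proposition~\ref{Prop-rigid} supplies. Alternatively, one can invoke the spectral form of the rigidity directly (Corollary~\ref{Prop-rigid1}: $\lambda_1(-\Delta+\tfrac12 R)\ge 3$ plus a degree-nonzero $1$-Lipschitz map to $\mathbb{S}^3$ forces the round sphere), which sidesteps the conformal normalization issue entirely. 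Either fix is routine; the substantive gap remains the first one.
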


  It turns out a similar result still holds when the scalar curvature $R_g\geq 6+\delta$ for $5\leq n\leq 7$, i.e.

\begin{theorem}\label{noncompactrigidity3}
For any $\delta>0$, let $(M^n, g)$, $5\leq n\leq 7$, be a  noncompact orientable connected and complete Riemannian manifold with scalar curvature $R_g\geq 6+\delta $.  Then there is no proper  and $1$-Lipschitz  map  $f:M^n\to \mathbb{S}^3\times\mathbb{R}^{n-3}$ with non-zero degree.
\end{theorem}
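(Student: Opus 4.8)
The plan is to argue by contradiction: assume such a proper $1$-Lipschitz map $f:M^n\to\mathbb{S}^3\times\mathbb{R}^{n-3}$ of non-zero degree exists, and use it to produce a region of $M$ on which a $\mu$-bubble (warped-product) argument forces a contradiction with the strict bound $R_g\ge 6+\delta$. Since the $\mathbb{R}^{n-3}$ factor is the ``enlargeable'' direction, the first step is to compose $f$ with a projection and a suitable degree-one map $\mathbb{R}^{n-3}\to\mathbb{R}^{n-3}$ (or pass to a bounded coordinate box) so that, for a large parameter $L$, we obtain a region $\Omega\subset M^n$ bounded by two disjoint hypersurfaces $\partial_\pm\Omega$ that map under $f$ to the two ``ends'' $\{t=\pm L\}\times(\text{rest})$; properness of $f$ guarantees $\Omega$ is compact. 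The key point is that the $1$-Lipschitz, non-zero-degree property survives on this truncation, so the relevant ``width'' between $\partial_-\Omega$ and $\partial_+\Omega$ in the $\mathbb{R}$-direction is at least (a fixed fraction of) $2L$.

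Next I would run the $\mu$-bubble / free-boundary-minimal-slicing machinery — exactly as in the proofs of the preceding theorems (this is the $5\le n\le 7$ range precisely so that regularity of minimizers holds). One chooses a warping function $h$ on $\Omega$ interpolating between $+\infty$ near $\partial_+\Omega$ and $-\infty$ near $\partial_-\Omega$ (built from a function of the $\mathbb{R}^{n-3}$-coordinate pulled back by $f$), and minimizes the prescribed-mean-curvature functional
\[
\mathcal{A}(\Sigma)=\mathcal{H}^{n-1}(\Sigma)-\int_\Omega (\chi_{\Omega_+}-\chi_{\Omega_-})\,h\,d\mathcal{H}^n
\]
over hypersurfaces $\Sigma$ separating the two boundary pieces. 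The resulting $\mu$-bubble $\Sigma$ is a smooth closed hypersurface; the stability inequality, combined with $R_g\ge 6+\delta$ and the pulled-back sphere-factor geometry from $f$ being $1$-Lipschitz, yields a metric $\hat g=\varphi^{2}g|_\Sigma$ on $\Sigma$ (conformal change by the first stability eigenfunction, or the Gromov--Zhu rearrangement) with scalar curvature bounded below by that of $\mathbb{S}^3$ times the appropriate area-contraction factor of $f|_\Sigma$, and with an induced non-zero-degree $1$-Lipschitz map $\Sigma\to\mathbb{S}^3\times\mathbb{T}^{n-4}$ (or $\mathbb{S}^3$ times a torus, obtained because the remaining $\mathbb{R}$-directions have been rolled up by periodicity). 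Iterating this descent $n-3$ times (or applying the compact case, Theorem \ref{comprigidity1}, in dimension $3$, i.e. Llarull's theorem itself) reduces to a closed $3$-manifold admitting a non-zero-degree $1$-Lipschitz map to $\mathbb{S}^3$ with $R\ge 6$, which by the rigidity case of Llarull forces an isometry to $\mathbb{S}^3$ — but then $R\equiv 6$, contradicting the strict inequality $R_g\ge 6+\delta$ that is inherited (with the same $\delta$, since all area factors are $\le 1$) through every step.

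The delicate point — and the reason the statement needs $R_g\ge 6+\delta$ rather than $R_g\ge 6$ — is controlling the warping term and the ``loss'' in the width estimate as $L\to\infty$. In the $\mu$-bubble inequality one picks up a term of the form $\int h^2 + |\nabla h|$ which is only small if $h$ can be chosen with small gradient, and that requires the separation between $\partial_\pm\Omega$ to be large; properness and the Lipschitz bound give separation $\gtrsim L$, so the error is $O(1/L)$ and can be absorbed into the fixed $\delta$. I expect the main obstacle to be making the geometric-measure-theoretic setup of the $\mu$-bubble fully rigorous on the noncompact $M$ — namely checking that the barrier configuration near $\partial_\pm\Omega$ is admissible, that the minimizer exists and stays in the interior, and that the induced map on $\Sigma$ genuinely has non-zero degree (this uses the product structure of the target and a Poincaré-dual/intersection-number argument). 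Once the descent is set up, closing the loop is immediate from Theorem \ref{comprigidity1} together with the strictness of the curvature bound.
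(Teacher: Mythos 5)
Your overall strategy — use the unbounded $\mathbb{R}^{n-3}$-directions together with properness to extract a compact band from $M^n$, run a $\mu$-bubble whose barrier profile has size $O(\sqrt{\delta})$, reduce to a compact warped-product problem, and let the strictness of $R_g\ge 6+\delta$ produce the contradiction — is the same as the paper's. However, your first step does not go through for $n\ge 5$. You claim that cutting along the two level sets $\{t=\pm L\}$ of a single $\mathbb{R}$-coordinate produces, by properness, a compact region $\Omega\subset M^n$ with two disjoint smooth boundary hypersurfaces. This fails: $\mathbb{S}^3\times\{\lvert t\rvert\le L\}\times\mathbb{R}^{n-4}$ is noncompact, so its $f$-preimage is noncompact as well, and properness is irrelevant. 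Your fallback, a bounded box $[-L,L]^{n-3}$, does give a compact preimage, but with corners and $2(n-3)$ boundary faces rather than a two-ended band. The paper resolves exactly this by replacing the slab with a torical band $\mathbb{T}\subset\mathbb{R}^{n-3}$ (a region diffeomorphic to $\mathbb{T}^{n-4}\times[-1,1]$, of width $>\pi\sqrt{2n(n-1)/\delta}$) that admits a proper $1$-Lipschitz degree-one map $\Phi:\mathbb{S}^3\times\mathbb{T}\to\mathbb{S}^3\times\mathbb{T}^{n-4}\times[-1,1]$; since $\mathbb{S}^3\times\mathbb{T}$ is compact, a connected component $\mathbb{L}$ of $f^{-1}(\mathbb{S}^3\times\mathbb{T})$ is a compact band with two smooth boundary parts $\partial_\pm\mathbb{L}$. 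You do gesture at ``rolling up the remaining $\mathbb{R}$-directions by periodicity,'' but you place this after the $\mu$-bubble; the compactification has to happen first, inside $\mathbb{R}^{n-3}$, or there is no compact region on which to minimize.

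Two smaller points. The paper does not iterate the $\mu$-bubble $n-3$ times; a single $\mu$-bubble $\bar\Sigma^{n-1}\subset\mathbb{L}$ built from $\mu(\rho)=(n-1)a\cot(a\rho)$, $a=\sqrt{\delta/(2n(n-1))}$, with $\rho$ a Lipschitz modification of the distance to $\partial_-\mathbb{L}$, suffices: warping $\bar\Sigma^{n-1}\times\mathbb{S}^1$ by the first stability eigenfunction produces a closed $n$-manifold with scalar curvature $>6$ and a non-zero degree map to $\mathbb{S}^3\times\mathbb{T}^{n-3}$ that is $1$-Lipschitz on the $\mathbb{S}^3$-factor, which contradicts Theorem~\ref{comprigidity1}; the descent to dimension $3$ is internal to that theorem via Proposition~\ref{Prop-rigid}. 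Also, the stability rearrangement is the warped product $g|_{\bar\Sigma}+u^2\,d\theta^2$ on $\bar\Sigma\times\mathbb{S}^1$, not a conformal change $\varphi^2 g|_\Sigma$; the latter does not produce the product-with-torus structure that Proposition~\ref{Prop-rigid} requires.
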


Theorem \ref{noncompactrigidity2} and Theorem \ref{noncompactrigidity3},    in conjunction with Llarull's result demonstrate that for any $\epsilon>0$,  $\epsilon$-gap length extremality of the standard $\mathbb{S}^3$ is stable under the Riemannian product with $\mathbb{R}^m$, $1\leq m\leq 4$ (see $D_{3}$. Question in a paper of Gromov \cite{Gromov2017}, p.153).  Owing to Theorem \ref{noncompactrigidity2}, it is our contention that the hypothesis of $ (M^4,g) $ exhibiting bounded geometry as stated in Theorem  \ref{noncompactrigidity1}  is unnecessary.  According to  Example \ref{example1}, we see that the condition $R_g\geq 6+\delta $ can not be removed. On the other hand, it is natural to study the rigidity for a noncompact manifold in Problem \ref{noncompactprob1}. Again, by Example \ref{example1}, it becomes clear that additional conditions are imperative. To elucidate further:
\begin{theorem}\label{noncompactrigidity4}
Let $(M^n, g)$,  $4\leq n\leq 7$,  be a  noncompact orientable connected and complete Riemannian manifold with scalar curvature $R_g\geq 6 $, $f:M^n\to \mathbb{S}^3\times\mathbb{R}^{n-3}$ be a proper  and $1$-Lipschitz  map with non-zero degree.	 Then $(M^n, g)$ is  isometric to $\mathbb{S}^3\times\mathbb{R}^{n-3}$ provided $f$ is isometric outside a compact domain of $M^n$.
\end{theorem}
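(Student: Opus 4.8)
The plan is to deduce the statement from the closed case, Theorem~\ref{comprigidity1}, by surgering the compact core of $M^n$ into a closed manifold and then promoting the resulting \emph{local} isometry to a global one. First, since $f$ is isometric off a compact domain and is proper of nonzero degree, $f$ carries the complement of a large compact domain $M_0$ isometrically onto the standard end $\mathbb{S}^3\times(\mathbb{R}^{n-3}\setminus\mathring{B}_R^{n-3})$ (the two cylinders $\mathbb{S}^3\times[R,\infty)$ when $n=4$): indeed, on the region where $f$ is isometric its image is open, and closed by properness, hence all of the end, which is connected for $n\geq 5$, while surjectivity of $f$ accounts for both components when $n=4$. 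So $M^n=M_0\cup E$ with $M_0$ compact connected, $\partial M_0\cong\mathbb{S}^3\times S^{n-4}$, and $f|_E$ an isometric diffeomorphism onto the standard end; restricting $f$ over a regular value in the end gives $\lvert\deg f\rvert=1$.

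Next I would close $M_0$ up. Fix a flat torus $\mathbb{T}^{n-3}$ of injectivity radius $>R$, delete a round $R$-ball, and form
\[
\hat M := M_0 \cup_{\mathbb{S}^3\times S^{n-4}} \bigl(\mathbb{S}^3\times(\mathbb{T}^{n-3}\setminus\mathring{B}_R^{n-3})\bigr),
\]
glued by the identity along $\mathbb{S}^3\times S^{n-4}$. Near the gluing both pieces carry the product metric $g_{\mathbb{S}^3}+dr^2+r^2g_{S^{n-4}}$ (a genuine cylinder when $n=4$), so $\hat M$ is a smooth closed, oriented, connected Riemannian $n$-manifold with $R_{\hat g}\geq 6$ — the glued-in piece is flat in the torus directions and round of scalar curvature $6$ in the $\mathbb{S}^3$ direction. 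Set $\hat f:=q\circ f$ on $M_0$, with $q:\mathbb{R}^{n-3}\to\mathbb{T}^{n-3}$ the quotient covering, and $\hat f:=$ the tautological inclusion on the glued-in piece; the two agree on $\partial M_0$ since $f|_E$ is the standard isometry and $q$ is injective on $\bar B_R^{n-3}$, so $\hat f:\hat M\to\mathbb{S}^3\times\mathbb{T}^{n-3}$ is well defined and $1$-Lipschitz ($q$ being a local isometry), and a regular value in $\mathbb{S}^3\times\mathring{B}_R^{n-3}$ has all its $\hat f$-preimages inside $M_0$, whence $\deg\hat f=\deg f\neq 0$. Theorem~\ref{comprigidity1} then gives that $\hat M$ is locally isometric to $\mathbb{S}^3\times\mathbb{T}^{n-3}$; since $M_0$ embeds isometrically in $\hat M$ and $E$ is literally an open subset of $\mathbb{S}^3\times\mathbb{R}^{n-3}$, all of $M^n$ is locally isometric to $\mathbb{S}^3\times\mathbb{R}^{n-3}$.

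It remains to upgrade this to a global isometry, which is standard. By the de Rham decomposition theorem and rigidity of $3$-dimensional space forms, the universal cover of $M^n$ is isometric to $\mathbb{S}^3\times\mathbb{R}^{n-3}$, so $M^n=(\mathbb{S}^3\times\mathbb{R}^{n-3})/\Gamma$ with $\Gamma$ acting freely, properly discontinuously and preserving the de Rham splitting. An isometric embedding of the standard end $\mathbb{S}^3\times(\mathbb{R}^{n-3}\setminus\mathring{B}_R^{n-3})$ into $\mathbb{S}^3\times\mathbb{R}^{n-3}$ splits, by de Rham, as an isometry of $\mathbb{S}^3$ times an isometric embedding of the flat punctured space $\mathbb{R}^{n-3}\setminus\mathring{B}_R^{n-3}$ into $\mathbb{R}^{n-3}$, and the latter is standard up to a Euclidean motion (its boundary sphere is umbilic with principal curvatures $1/R$); hence such an embedding has compact complement for $n\geq 5$ (a half-cylinder complement for $n=4$). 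Applying this to the lifts of $E$ to the universal cover forces $\Gamma$ to be trivial — otherwise $\mathbb{S}^3\times\mathbb{R}^{n-3}$ would contain more pairwise-disjoint copies of the standard end than it has ends, or $E$ would be a nontrivial Riemannian self-covering of the standard end, impossible (for $n=5$ because a flat cone of cone angle $>2\pi$ with its vertex removed does not embed isometrically in $\mathbb{R}^2$). Thus $M^n$ is isometric to $\mathbb{S}^3\times\mathbb{R}^{n-3}$. Conceptually the argument is short once Theorem~\ref{comprigidity1} is in hand; the point demanding real care is verifying that $\hat M$ satisfies \emph{every} hypothesis of Theorem~\ref{comprigidity1}, above all that $R_{\hat g}\geq 6$ persists through the gluing region — which is exactly where the hypothesis that $f$ be isometric, not merely area- or length-nonincreasing, off a compact set is used: it makes the gluing smooth with the unchanged model metric and no loss of scalar curvature. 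By Example~\ref{example1} this hypothesis cannot be dropped.
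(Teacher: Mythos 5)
Your proof is correct and rests on the same core strategy as the paper's: use the ``isometric at infinity'' hypothesis to compactify a large core of $M^n$ into a closed manifold mapping with nonzero degree to $\mathbb{S}^3\times\mathbb{T}^{n-3}$, invoke Theorem~\ref{comprigidity1} to get a local isometry, and then upgrade to a global one. The two routes differ in two places. First, the gluing: you cut $M^n$ along the round sphere $\mathbb{S}^3\times S^{n-4}$ of radius $R$ and fill in $\mathbb{S}^3\times(\mathbb{T}^{n-3}\setminus\mathring{B}_R)$, whereas the paper cuts along the cube $\mathbb{S}^3\times\partial[-a,a]^{n-3}$ and identifies opposite faces; both produce a smooth closed $\bar M$ with $R\geq 6$ because the metric near the cut is exactly the model product metric, and both yield the same topological result. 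Second, the local-to-global step: you pass to the universal cover, identify it with $\mathbb{S}^3\times\mathbb{R}^{n-3}$ via the developing map, and rule out a nontrivial deck group $\Gamma$ by a covering/embedding argument on the standard end (this is where the fussy case distinctions for $n=4,5$ enter); the paper instead observes that local isometry to $\mathbb{S}^3\times\mathbb{T}^{n-3}$ forces $\mathrm{Ric}_g\geq 0$ everywhere, produces $n-3$ independent lines from the isometric end, and applies the Cheeger–Gromoll splitting theorem. The paper's finish is more economical and avoids the separate low-dimensional analyses; yours is more hands-on but carries extra bookkeeping (the ``flat cone of angle $>2\pi$'' aside for $n=5$ and the two-ended $n=4$ case). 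Both are sound, and your observation that the hypothesis of \emph{isometry} --- not merely a nonincreasing map --- at infinity is exactly what makes the gluing metrically smooth and scalar-curvature-preserving is the right thing to highlight; it is also the reason Example~\ref{example1} does not contradict the theorem.
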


There is a relevant problem for $f:(M^n, g)\to \mathbb{S}^{n}$, where $(M^n, g)$ is a complete manifold with $R_g\geq n(n-1)$, and $f$ is a proper  and area decreasing map with non-zero degree      was raised  by M. Gromov (p.254 in \cite{Gr2023} ). When $M^n$ is noncompact and spin, an interesting result was obtained by Zhang \cite{Zhangw2020}.%to concern an analogical \textcolor{red}{result}

In order to substantiate our theorems, it is noted that, utilizing  dimension reduction arguments initiated  by Fischer-Colbrie and Schoen \cite{FS1980}, Schoen and Yau\cite{ScY1979}, for any Riemannian manifold  $(M^n, g)$  referenced in the aforementioned theorems, one can derive a $3-$dimensional compact and orientable  manifold $\Sigma^3$, a crucial step entails verifying that such a manifold $\Sigma^3$  equip with the induced metric is isometric to the standard round $\mathbb{S}^3$. In order to achieve this, we employ  torical symmetrization techniques pioneered by Schoen and  Yau \cite{SY1}, Gromov and Lawson \cite{GL1983}, among others, in conjunction with methods involving Dirac operator. It is important to observe that  $\Sigma^3$ is automatically spin and so is the resulting manifold from $\Sigma^3$  warped product with several circles $\mathbb{S}^1$.  Consequently, we can apply the methodologies detailed in \cite{LM1998} to get what we want,  for further information, refer to Proposition \ref{Prop-rigid} below. Once it is established that $\Sigma^3$, with the induced metric, is isometric to the standard round $\mathbb{S}^3$, foliation techniques can be utilized to deduce the definitive rigidity results. We emphasize that, contrary to the typical scenarios where the Dirac operator is utilized, there is no requirement in the aforementioned theorems for the manifolds to be spin, nor is there a necessity for any specialized structures at the infinity of manifolds in Theorem \ref{noncompactrigidity1}, Theorem \ref{noncompactrigidity2}, Theorem \ref{noncompactrigidity3}.%It may be worth

Following the idea in a work of Zhu \cite{ZJT2020}, we construct a  $3$-dimensional compact orientable hypersurface $\Sigma^3 $ referred to as  $\mu$-bubble in  $(M^4,g)$ in Theorem \ref{noncompactrigidity1}.  Specifically, we utilize function $h_\epsilon$  introduced in Lemma 2.3 in  \cite{ZJT2020}  to establish a $h_\epsilon$-bubble  in  $(M^4,g)$. By leveraging the bounded geometry assumption, we  establish the convergence of   those  $h_\epsilon$-bubbles to the aforementioned $\Sigma^3 $,  which is identified as  the standard sphere $\mathbb{S}^3$.

Suppose there exists a  proper  and $1$-Lipschitz  map  $f:M^4\to \mathbb{S}^3\times\mathbb{R}$ with non-zero degree in Theorem \ref{noncompactrigidity2}.  Drawing on the proof strategy of  Theorem 3.5 in work of Cecchini, R\"{a}de and Zeidler \cite{Ce_2023}, we  derive a $\mu-$bubble $\Sigma^3 $ in $(M^4, g)$ in Theorem \ref{noncompactrigidity2} for  a carefully chosen function $\mu$, and we  demonstrate that  the scalar curvature of the manifold $(\bar M^4, \bar g)$ defined by warped product of $\Sigma^3 $ with a circle $\mathbb{S}^1$ is strictly bigger than $6$.  Conversely, it is feasible to construct $1$-Lipschitz and non-zero  map $\bar f$: $\bar M^4\to \mathbb{S}^3 \times \mathbb{S}^1 $, while this contradicts to Theorem \ref{comprigidity1}.

To prove Theorem \ref{noncompactrigidity3}, we observe that there is a toric band $\mathbb{T}$ with arbitrarily large width in $\mathbb{R}^{n-3}$. Suppose there is a proper  and $1$-Lipschitz  map  $f:M^n\to \mathbb{S}^3\times\mathbb{R}^{n-3}$ with non-zero degree, one may define a band $\mathbb{L}$ in $M^n$ by choosing a suitable connected component of $f^{-1}(\mathbb{S}^3\times\mathbb{T})$. Note that $\mathbb{L}$ is also a domain in $M^n$, then as what we did in the proof of  Theorem \ref{noncompactrigidity2}, we may get a contradiction by the existence of some $\mu-$bubble in $(M^n, g)$ and Theorem \ref{comprigidity1}. By compactifying the non-compact end of  $M^n$  in a manner analogous to that employed in the proof of the Positive Mass Theorem,  Theorem \ref{noncompactrigidity4} can be reduced to  Theorem \ref{comprigidity1}.

After the completion of this paper, a result akin to Theorem \ref{comprigidity1} which did not necessitate the spin assumption, was identified in the work of  Cecchini, and Schick \cite{CS2021}. They also employed dimension reduction in conjunction with the Dirac operator. However, some of the estimates in their approaches were not sufficiently precise to yield a rigidity result, which is essential in the context of noncompact cases.

The remainder of the paper is outlined as follows. In Section \ref{Sec-minimal-surface}, we present some basic facts of $\mu-$bubble and give the proof Proposition \ref{Prop-rigid}. In Section \ref{Sec-main-result}, we prove our main results.

{\bf Acknowledgement}: The authors wish to express their gratitude to the anonymous referees for identifying numerous typographical errors and for providing suggestions and comments that enhanced the clarity of the paper.

%%%%%%%%%%%%%%%%%%%%%%%%%%%%%%%
%%%%%%%%%%%%%%%%%%%%%%%%%%%%%%%
\section{Minimal hypersurface, $\mu$-bubble and Dirac operator}\label{Sec-minimal-surface}
\subsection{Minimal hypersurface and $\mu$-bubble}
In this subsection, we revisit some key facts concerning minimal hypersurfaces and $\mu$-bubbles. For a comprehensive understanding of $\mu$-bubbles, readers are referred to Section 5 in Gromov's paper \cite{Gr2023}.

For a  compact, oriented Riemannian manifold $(M,g)$, if there exists a stable minimal hypersurface $\Sigma$, then the mean curvature of $H_{\Sigma}=0$ in $M$. By the second variation of minimal, oriented hypersurface, the associated Jacobi operator
\[J_{\Sigma}=-\Delta_{\Sigma}-(\Ric_{g}(\nu,\nu)+|A_{\Sigma}|^2)=-\Delta_{\Sigma}+\frac{1}{2}\left(R_{\Sigma}-R_{M}-|A_{\Sigma}|^{2}\right)\]
is nonnegative, where $A_{\Sigma}$ is the second fundamental form of $\Sigma$ and $\nu$ is the unit normal vector of $\Sigma$.

Let $(N^{n},g)$ be a compact Riemannian manifold with boundary. Assume that $\partial N=\partial_{-}N\cup \partial_{+}N$ where $\partial _{\pm}N$ are disjoint closed hypersurface. Fix a smooth function $\mu$ on the interior of $N$, denoted by $\overset{\circ}{N}$ or on $N$. Choose a Caccioppoli set $\Omega_{0}\subset N$ with smooth boundary $\partial \Omega_{0}\subset \overset{\circ}{N}$ and $\partial_{-}N\subset \Omega_{0}$. Consider the following functional
\begin{eqnarray}\label{eqn-functional-times-sphere}
% \nonumber % Remove numbering (before each equation)
  \mathcal{B}(\Omega)
  =\int_{\partial^{\ast}(\Omega)}d\mathcal{H}^{n-1}- \int_{N}\left(\chi_{\Omega}-\chi_{\Omega_{0}}\right)\mu d\mathcal{H}^{n},
\end{eqnarray}
for $\Omega\in \mathcal{C}$, where $\mathcal{C}$ is defined as
\[\mathcal{C}=\{\Omega|\mbox{all Caccioppoli sets } \Omega \subset N \mbox{ and }\Omega\triangle \Omega_{0}\Subset \overset{\circ}{N}\}, \]
here and in the sequel  $\mathcal{H}^{n}$ denotes $n$-dimensional Hausdorff measure. Then $\mu$-bubbles are critical points of the functional $\mathcal{B}(\Omega)$.

From Zhu's Proposition 2.1 in \cite{Zhu2021}, we have the following existence result of $\mu$-bubble.
\begin{lemma}[Existence of $\mu$-bubble]
For $(N^{n},g)$ as above with $n\leq 7$, if either $\mu\in C^{\infty}(\overset{\circ}{N})$ with $\mu\to \pm\infty$ on $\partial_{\mp}N$, or $\mu\in C^{\infty}(N)$ with
\[\mu|_{\partial_{-}N}>H_{\partial_{-}N},\quad \mu|_{\partial_{+}N}<H_{\partial_{+}N}\]
where $H_{\partial_{-}N}$ is the mean curvature of $\partial_{-}N$ with respect to the inward normal and $H_{\partial_{+}N}$
is the mean curvature of $\partial_{+}N$ with respect to the outward normal. Then there exists an $\Omega\in \mathcal{C}$ with smooth boundary such that
\[\mathcal{B}(\Omega)=\inf_{\Omega'\in\mathcal{C}}\mathcal{B}(\Omega').\]
\end{lemma}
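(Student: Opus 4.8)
The plan is to produce $\Omega$ as a minimizer of $\mathcal{B}$ over the class $\mathcal{C}$ by the direct method; the two delicate points are that a minimizing sequence cannot escape toward $\partial N$, so that its $L^{1}$-limit still lies in $\mathcal{C}$, and the interior regularity of the resulting hypersurface, which is where the restriction $n\le 7$ enters. Note first that replacing $\Omega_{0}$ by any other admissible set changes $\mathcal{B}$ by an additive constant and leaves $\mathcal{C}$ unchanged, so we may assume $\Omega_{0}$ is a thin collar of $\partial_{-}N$.

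I would begin by recording that $\inf_{\mathcal{C}}\mathcal{B}>-\infty$ and that every minimizing sequence $\{\Omega_{j}\}$ has uniformly bounded perimeter. When $\mu\in C^{\infty}(N)$ is bounded this is immediate, since $\bigl|\int_{N}(\chi_{\Omega}-\chi_{\Omega_{0}})\mu\,d\mathcal{H}^{n}\bigr|\le\|\mu\|_{L^{\infty}}\mathcal{H}^{n}(N)$ gives $\mathcal{H}^{n-1}(\partial^{\ast}\Omega_{j})\le\mathcal{B}(\Omega_{j})+\|\mu\|_{L^{\infty}}\mathcal{H}^{n}(N)$. When $\mu\to\pm\infty$ on $\partial_{\mp}N$ both bounds instead follow once one knows $\{\Omega_{j}\}$ is eventually contained in a fixed compact $K\Subset\overset{\circ}{N}$, on which $\mu$ is bounded and off which $\Omega_{j}=\Omega_{0}$; that containment, proved next, in fact handles both cases uniformly.

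The heart of the matter is a sweep-out argument at each boundary component. Near $\partial_{+}N$ write $t$ for the distance to $\partial_{+}N$ and, for small $\delta$, foliate the collar $U_{\delta}=\{t<\delta\}$ by the smooth hypersurfaces $\{t=s\}$, whose mean curvature with respect to the outward direction $-\nabla t$ tends to $H_{\partial_{+}N}$ as $s\to 0$; shrinking $\delta$ we obtain $\Delta t<-\mu$ throughout $U_{\delta}$ (automatic in the blow-up case once $\mu$ is very negative there) and $U_{\delta}\cap\Omega_{0}=\emptyset$. Then any $\Omega\in\mathcal{C}$ coincides with $\Omega_{0}$, hence is null, near $\partial_{+}N$, so $\widetilde\Omega:=\Omega\setminus U_{\delta}$ again belongs to $\mathcal{C}$; it changes the potential term of $\mathcal{B}$ by $\int_{\Omega\cap U_{\delta}}\mu\,d\mathcal{H}^{n}$, and applying the divergence theorem to $\nabla t$ on $\Omega\cap U_{\delta}$ shows it changes the perimeter by at most $\int_{\Omega\cap U_{\delta}}\Delta t\,d\mathcal{H}^{n}$; since $\Delta t<-\mu$ on $U_{\delta}$, the sum of the two is strictly negative whenever $\mathcal{H}^{n}(\Omega\cap U_{\delta})>0$. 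Hence along a minimizing sequence $\mathcal{H}^{n}(\Omega_{j}\cap U_{\delta})\to 0$, and performing the symmetric comparison with $\Omega\cup U'_{\delta'}$ across a collar of $\partial_{-}N$ (on which $\Omega_{0}$ is full and $\mu|_{\partial_{-}N}>H_{\partial_{-}N}$), we may assume $\partial^{\ast}\Omega_{j}\subset K:=N\setminus(U_{\delta}\cup U'_{\delta'})\Subset\overset{\circ}{N}$ for all $j$. I expect this containment step to be the main obstacle: the comparison inequalities have to be justified for general Caccioppoli sets, and one must check that each modified competitor still lies in $\mathcal{C}$.

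Granting the containment, the direct method finishes the proof. By $BV$-compactness a subsequence of $\{\Omega_{j}\}$ converges in $L^{1}_{\mathrm{loc}}(N)$ to some $\Omega$ with $\chi_{\Omega}=\chi_{\Omega_{0}}$ off $K$, so $\Omega\triangle\Omega_{0}\Subset\overset{\circ}{N}$ and $\Omega\in\mathcal{C}$; lower semicontinuity of perimeter and continuity of the potential term under $L^{1}$-convergence (dominated convergence, $\mu$ bounded on $K$) give $\mathcal{B}(\Omega)\le\liminf_{j}\mathcal{B}(\Omega_{j})=\inf_{\mathcal{C}}\mathcal{B}$, so $\Omega$ minimizes $\mathcal{B}$. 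Finally $\Omega$ is a $\Lambda$-minimizer of perimeter in $\overset{\circ}{N}$ — any competitor supported in a small ball $B_{r}$ costs at most $\|\mu\|_{L^{\infty}}\mathcal{H}^{n}(B_{r})\le Cr^{n}$ in extra area — so the interior regularity theory for such functionals (De Giorgi--Federer--Simons, in the prescribed-mean-curvature form of Tamanini and Massari; cf. Section~5 of \cite{Gr2023} and \cite{Zhu2021}) shows the singular set of $\partial\Omega$ has Hausdorff dimension at most $n-8$, hence is empty for $n\le 7$. Bootstrapping the prescribed-mean-curvature equation $H_{\partial\Omega}=\mu\in C^{\infty}$ by elliptic regularity then upgrades $\partial\Omega$ to a smooth embedded hypersurface, and $\Omega$ is the desired $\mu$-bubble.
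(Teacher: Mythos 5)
The paper does not supply its own proof of this lemma; it is stated as a recollection of Zhu's Proposition~2.1 in \cite{Zhu2021}, to which the reader is referred. Your argument — direct method, barrier/calibration comparison across collars of $\partial_{\pm}N$ to confine a minimizing sequence in a compact $K\Subset\overset{\circ}{N}$, $BV$-compactness and lower semicontinuity, then $\Lambda$-minimizer regularity with the $n\le 7$ dimension bound — is a correct reconstruction of essentially that same proof, and the comparison inequalities you flag as needing justification for Caccioppoli sets do go through via the generalized Gauss--Green formula exactly as you sketch.
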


We now discuss the first and second variation of the $\mu$-bubble.%(\ref{eqn-functional-times-sphere}).
\begin{lemma}[First variation]
  If $\Omega_{\rho}$ is a smooth $1$-parameter family of regions with $\Omega_{0}=\Omega$ and normal speed $\varphi$ at $\rho=0$, then
\begin{eqnarray}\label{eqn-first-variation}
  % \nonumber % Remove numbering (before each equation)
    \frac{d}{d\rho}\mathcal{B}(\Omega_{\rho})     =\int_{\partial \Omega_{\rho}}\left(H_{\partial \Omega}-\mu \right)\varphi d\mathcal{H}^{n-1}
  \end{eqnarray}
  where $H_{\partial \Omega}$ is the mean curvature of $\partial \Omega_{\rho}$ and $\nu$ is the outwards pointing unit normal vector. In particular, a $\mu$-bubble $\Omega$ satisfies
  \[H_{\partial \Omega}=\mu.\]
\end{lemma}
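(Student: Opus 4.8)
The plan is to compute the derivative $\frac{d}{d\rho}\mathcal{B}(\Omega_\rho)$ by differentiating the two terms in \eqref{eqn-functional-times-sphere} separately. The first term is the perimeter $\mathrm{Per}(\Omega_\rho)=\int_{\partial\Omega_\rho}d\mathcal{H}^{n-1}$, and the second term is $-\int_N(\chi_{\Omega_\rho}-\chi_{\Omega_0})\mu\,d\mathcal{H}^n$. For the perimeter term I would invoke the classical first variation of area formula: if $X$ is the variation vector field generating the family $\Omega_\rho$ and $\varphi=\langle X,\nu\rangle$ is its normal component on $\partial\Omega$ (with $\nu$ the outward unit normal), then
\[
\frac{d}{d\rho}\Big|_{\rho=0}\mathrm{Per}(\Omega_\rho)=\int_{\partial\Omega}\mathrm{div}_{\partial\Omega}X\,d\mathcal{H}^{n-1}=\int_{\partial\Omega}H_{\partial\Omega}\,\varphi\,d\mathcal{H}^{n-1},
\]
where the tangential divergence theorem on the closed hypersurface $\partial\Omega$ kills the tangential part of $X$ and leaves only the mean-curvature contribution from the normal part (here $H_{\partial\Omega}$ is the mean curvature, i.e. the trace of the second fundamental form, with the sign convention making it the scalar mean curvature with respect to $\nu$).

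For the bulk term, I would use the coarea/transport formula for the rate of change of the signed volume enclosed: since $\frac{d}{d\rho}\chi_{\Omega_\rho}$ is (distributionally) the boundary measure weighted by the normal speed, one gets
\[
\frac{d}{d\rho}\Big|_{\rho=0}\int_N\big(\chi_{\Omega_\rho}-\chi_{\Omega_0}\big)\mu\,d\mathcal{H}^n=\int_{\partial\Omega}\mu\,\varphi\,d\mathcal{H}^{n-1}.
\]
Concretely this follows by writing the flow $\Phi_\rho$ of $X$, changing variables $\int_{\Omega_\rho}\mu=\int_{\Omega}(\mu\circ\Phi_\rho)\,\mathrm{Jac}(\Phi_\rho)$, differentiating under the integral, and applying the divergence theorem to convert $\int_\Omega \mathrm{div}(\mu X)$ into the boundary integral $\int_{\partial\Omega}\mu\langle X,\nu\rangle$. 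Subtracting, $\frac{d}{d\rho}\mathcal{B}(\Omega_\rho)\big|_{\rho=0}=\int_{\partial\Omega}(H_{\partial\Omega}-\mu)\varphi\,d\mathcal{H}^{n-1}$; replacing $\Omega$ by $\Omega_\rho$ gives the stated formula for every $\rho$. Finally, a $\mu$-bubble is by definition a critical point of $\mathcal{B}$, so the right-hand side vanishes for every admissible $\varphi\in C^\infty_c(\partial\Omega)$ — admissibility being exactly that the variation stays in $\mathcal{C}$, which is automatic for compactly supported normal speeds since $\partial\Omega\Subset\overset{\circ}{N}$ — and the fundamental lemma of the calculus of variations forces $H_{\partial\Omega}=\mu$ pointwise on $\partial\Omega$.

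The computation itself is entirely routine; the only point requiring a little care is bookkeeping of sign conventions — making sure that "outward pointing $\nu$", the sign in the definition of $H_{\partial\Omega}$, and the sign of the $\mu$-term in $\mathcal{B}$ are mutually consistent so that the combination appears as $H_{\partial\Omega}-\mu$ rather than $H_{\partial\Omega}+\mu$ — and noting that regularity of the minimizer (so that $\partial\Omega$ is a smooth hypersurface on which these classical variation formulas apply) is supplied by the preceding existence lemma in dimensions $n\le 7$. No genuine obstacle arises here; this lemma is a standard preliminary, and the real work of the paper lies in the second variation and the curvature estimates that follow.
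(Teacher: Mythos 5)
Your computation is correct, and it is exactly the standard first-variation argument that the paper implicitly relies on: the paper states this lemma without proof as a routine fact, so there is no paper proof to diverge from. Your two ingredients — the tangential divergence theorem giving $\frac{d}{d\rho}\mathrm{Per}(\Omega_\rho)=\int_{\partial\Omega}H\varphi$, and the transport/divergence computation giving $\frac{d}{d\rho}\int_N(\chi_{\Omega_\rho}-\chi_{\Omega_0})\mu=\int_{\partial\Omega}\mu\varphi$ — combine to the stated formula, and your remark about sign bookkeeping and the smoothness of $\partial\Omega$ from the existence lemma in dimensions $n\le 7$ covers the only points where one could slip.
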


\begin{lemma}[Second variation]
  Consider a $\mu$-bubble $\Omega$ with $\partial\Omega=\mathcal{S}$ ( hence, $H_{\mathcal{S}}=\mu$). Assume that $\Omega_{\rho}$ is a smooth $1$-parameter family of regions with $\Omega_{0}=\Omega$ and normal speed $\varphi$ at $\rho=0$, then
  \begin{eqnarray}\label{eqn-second-variation}
  % \nonumber % Remove numbering (before each equation)
   && \frac{d^2}{d\rho^2}\mathcal{B}(\Omega_{\rho})
   \\ &=& \int_{\mathcal{S}}\left[|\nabla_{\mathcal{S}} \varphi|^2+\frac{1}{2}\left(R_{\mathcal{S}}-R_{N}-H_{\mathcal{S}}^2-|A_{\mathcal{S}}|^2\right)\varphi^2-2\langle\nabla_{N}\mu ,\nu\rangle\varphi^2\right]d\mathcal{H}^{n-1}\nonumber  \end{eqnarray}
\end{lemma}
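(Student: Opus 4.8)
The plan is to obtain \eqref{eqn-second-variation} by differentiating the first variation identity \eqref{eqn-first-variation} once more in $\rho$ and then exploiting the $\mu$-bubble equation $H_{\mathcal{S}}=\mu$ to discard all but one of the resulting terms. Let $\Phi_{\rho}$ be the flow generating $\Omega_{\rho}$, normalized so that its variation field along $\mathcal{S}$ equals $\varphi\nu$, and let $\psi_{\rho}$ be the pointwise normal speed of $\partial\Omega_{\rho}$, so $\psi_{0}=\varphi$. Since \eqref{eqn-first-variation} holds at every parameter value, $\frac{d}{d\rho}\mathcal{B}(\Omega_{\rho})=\int_{\partial\Omega_{\rho}}(H_{\partial\Omega_{\rho}}-\mu)\,\psi_{\rho}\,d\mathcal{H}^{n-1}$. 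Differentiating this again and evaluating at $\rho=0$ by the product rule, the two terms in which the $\rho$-derivative lands on $\psi_{\rho}$ or on the induced volume form $d\mathcal{H}^{n-1}$ both retain the factor $H_{\mathcal{S}}-\mu$, which vanishes on $\mathcal{S}$ because $\Omega$ is a $\mu$-bubble; hence only the term in which the derivative hits $H_{\partial\Omega_{\rho}}-\mu$ survives, and
\[
\frac{d^{2}}{d\rho^{2}}\mathcal{B}(\Omega_{\rho})\Big|_{\rho=0}
=\int_{\mathcal{S}}\Big(\frac{\partial}{\partial\rho}\Big|_{\rho=0}H_{\partial\Omega_{\rho}}-\frac{\partial}{\partial\rho}\Big|_{\rho=0}(\mu\circ\Phi_{\rho})\Big)\varphi\,d\mathcal{H}^{n-1}.
\]

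It then remains to evaluate the two pointwise $\rho$-derivatives. Since $\mu$ is a fixed function on $N$, the chain rule gives $\frac{\partial}{\partial\rho}\big|_{\rho=0}(\mu\circ\Phi_{\rho})=\langle\nabla_{N}\mu,\varphi\nu\rangle=\varphi\,\langle\nabla_{N}\mu,\nu\rangle$, which is the source of the $\langle\nabla_{N}\mu,\nu\rangle\varphi^{2}$ term in \eqref{eqn-second-variation}. For the mean curvature I would invoke the classical linearization of the mean curvature operator under a normal deformation of speed $\varphi$, i.e. $\frac{\partial}{\partial\rho}\big|_{\rho=0}H_{\partial\Omega_{\rho}}=-\Delta_{\mathcal{S}}\varphi-\big(|A_{\mathcal{S}}|^{2}+\Ric_{g}(\nu,\nu)\big)\varphi$, the same identity that underlies the Jacobi operator $J_{\Sigma}$ recalled at the beginning of this subsection. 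Substituting these, integrating the Laplacian term by parts over the closed hypersurface $\mathcal{S}$ to turn $\int_{\mathcal{S}}(-\Delta_{\mathcal{S}}\varphi)\varphi$ into $\int_{\mathcal{S}}|\nabla_{\mathcal{S}}\varphi|^{2}$, and eliminating $|A_{\mathcal{S}}|^{2}+\Ric_{g}(\nu,\nu)$ by the traced Gauss equation $R_{\mathcal{S}}=R_{N}-2\Ric_{g}(\nu,\nu)+H_{\mathcal{S}}^{2}-|A_{\mathcal{S}}|^{2}$ (equivalently $-\big(|A_{\mathcal{S}}|^{2}+\Ric_{g}(\nu,\nu)\big)=\tfrac{1}{2}(R_{\mathcal{S}}-R_{N}-H_{\mathcal{S}}^{2}-|A_{\mathcal{S}}|^{2})$) assembles precisely the right-hand side of \eqref{eqn-second-variation}.

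I do not expect a genuine obstacle here: this is a standard second variation computation, and all of its ingredients — the first variation of area, the transport theorem for the weighted volume $\int_{N}(\chi_{\Omega}-\chi_{\Omega_{0}})\mu$, the linearized mean curvature formula, and the Gauss equation — are classical. The only points that need care are (i) checking that the discarded terms really are multiples of $H_{\mathcal{S}}-\mu$, so that one never has to compute $\partial_{\rho}\psi_{\rho}$ or the second variation of $d\mathcal{H}^{n-1}$ on their own, and (ii) keeping the sign and normalization conventions for $\nu$, $A_{\mathcal{S}}$ and $H_{\mathcal{S}}$ consistent with those already fixed for \eqref{eqn-first-variation}. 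If one prefers to avoid citing the linearized mean curvature identity, an equivalent hands-on route is to foliate a tubular neighborhood of $\mathcal{S}$ by the level sets of the signed distance to $\mathcal{S}$, write $\partial\Omega_{\rho}$ as the graph of $\rho\varphi+O(\rho^{2})$ over $\mathcal{S}$ in this foliation, and expand $\mathcal{B}(\Omega_{\rho})$ directly to second order in $\rho$.
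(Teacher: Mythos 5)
Your derivation is the standard one and it is correct. Differentiating the first-variation identity \eqref{eqn-first-variation} a second time and using $H_{\mathcal{S}}=\mu$ to discard the two terms in which the $\rho$-derivative falls on the normal speed $\psi_\rho$ or on the induced volume form is exactly the efficient way to organize the computation, and fixing the variation to be purely normal along $\mathcal{S}$ is legitimate since at a critical point the second variation is insensitive to the tangential component. The linearized mean curvature identity $\partial_{\rho}\big|_{0}H_{\partial\Omega_\rho}=-\Delta_{\mathcal{S}}\varphi-\big(|A_{\mathcal{S}}|^{2}+\Ric_{g}(\nu,\nu)\big)\varphi$ and the traced Gauss equation $R_{\mathcal{S}}=R_{N}-2\Ric_{g}(\nu,\nu)+H_{\mathcal{S}}^{2}-|A_{\mathcal{S}}|^{2}$ are the right inputs, and combining them with an integration by parts gives the first two terms of the displayed formula. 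The paper states this lemma without proof, so there is nothing in the text to compare against; your argument is the expected one.

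There is, however, one point you should not gloss over: your own computation produces the coefficient $-1$, not $-2$, in front of $\langle\nabla_{N}\mu,\nu\rangle\varphi^{2}$. Indeed you correctly find $\partial_{\rho}\big|_{0}(\mu\circ\Phi_{\rho})=\varphi\langle\nabla_{N}\mu,\nu\rangle$, and after multiplying by $\varphi$ this contributes $-\langle\nabla_{N}\mu,\nu\rangle\varphi^{2}$ to the integrand; no factor of $2$ appears along this route, and none should. The extra $2$ in \eqref{eqn-second-variation} is a typographical slip in the paper. This is confirmed by the Jacobi operator the paper writes immediately afterward, namely $\mathcal{J}=-\Delta_{\mathcal{S}}+\tfrac{1}{2}\big(R_{\mathcal{S}}-R_{N}-H_{\mathcal{S}}^{2}-|A_{\mathcal{S}}|^{2}-2\langle\nabla_{N}\mu,\nu\rangle\big)$, in which the $2$ sits \emph{inside} the overall factor $\tfrac{1}{2}$, so the effective coefficient of $\langle\nabla_{N}\mu,\nu\rangle$ is $-1$, in agreement with your calculation; it is further confirmed by the way stability is used in the proof of Theorem \ref{noncompactrigidity2}, where the resulting inequality
\[
-\Delta_{\bar\Sigma}u+\tfrac{1}{2}R_{\bar\Sigma}u\geq \tfrac{1}{2}\big(R_{g}+\tfrac{4}{3}\mu_{\kappa}^{2}-2|\mu_{\kappa}'||\nabla\phi|\big)u
\]
corresponds precisely to the coefficient $-1$. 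So your last sentence, which claims your substitution ``assembles precisely the right-hand side of \eqref{eqn-second-variation},'' is slightly overstated: it assembles the corrected right-hand side, with $-\langle\nabla_{N}\mu,\nu\rangle\varphi^{2}$ in place of $-2\langle\nabla_{N}\mu,\nu\rangle\varphi^{2}$. It is worth making that correction explicit rather than letting it pass as a match.
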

The Jacobi operator of this second variation is
\begin{eqnarray*}
    \mathcal{J}=-\Delta_{\mathcal{S}}+\frac{1}{2}\left(R_{\mathcal{S}}-R_{N}-H^{2}_{\mathcal{S}}-|A_{\mathcal{S}}|^2-2\langle \nabla_{N}\mu,\nu\rangle\right).
\end{eqnarray*}

%%%%%%%%%%%%%%%%%%%%%%%%%%%%%%%%%%%%%%%%%%%%%
\subsection{Dirac operator}
In this subsection, we use the Dirac operator  approach to prove Proposition \ref{Prop-rigid} which is crucial for the proof of our theorems. For foundational insights into the  Dirac operator, readers are referred to the work of Lawson and Michelsohn \cite{Lawson1989}. The proof strategy for Proposition  \ref{Prop-rigid} is inspired by the work of Llarull \cite{LM1998}, and the computational steps share a parallel structure. For the sake of accessibility, we provide the details of the proof (see also  B\"{a}r-Brendle-Hanke-Wang's results in \cite{BBHW2023} for the case with a nonempty boundary.).

\begin{proposition}\label{Prop-rigid}
  Let $(M^{n+3}=\Sigma^{3}\times\mathbb{T}^{n},g)$ be a spin manifold with $g=g_{\Sigma^{3}}+\sum_{i=1}^{n}u_{i}^2d\theta^2$, where $u_{i}$ is positive smooth function on $\Sigma$ for each $1\leq i\leq n$. Assume that for $\mathbb{S}^{3}\times \mathbb{T}^{n}$ with the metric $g_{0}=g_{\mathbb{S}^{3}}+g_{\mathbb{T}^{n}}$, there exists a  map $f=(f_{1},id_{T^{n}}):M=\Sigma\times\mathbb{T}^{n}\to \mathbb{S}^{3}\times \mathbb{T}^{n}$ with non-zero degree and $f_{1}$ is $1$--Lipschitz. If $R_{g}(x)\geq R_{g_{0}}(f(x))=6$ for all $x\in M$. Then  $u_{i}$ are constants for $i=1,\cdots,n$, and   $(M^n, g)$ is isometric to $ \mathbb{S}^{3}\times \mathbb{T}^{1}_{r_{1}}\times\cdots\times \mathbb{T}^{1}_{r_{n}}$, where $r_i$, $i=1,\cdots,n$, are positive constants.
  \end{proposition}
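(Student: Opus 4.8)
The plan is to adapt Llarull's spinorial argument to the warped-product setting $M = \Sigma^3 \times \mathbb{T}^n$ with $g = g_{\Sigma^3} + \sum_i u_i^2 \, d\theta_i^2$. First I would set up the twisted Dirac operator: pull back the spinor bundle $\mathcal{E} \to \mathbb{S}^3 \times \mathbb{T}^n$ (the relevant $\mathbb{Z}/2$-graded bundle whose index over the model detects the degree, as in Llarull's construction on the even-dimensional double $\mathbb{S}^3 \times \mathbb{T}^n$ — here one passes to the appropriate even-dimensional setup or uses the $\mathbb{S}^3$-factor bundle twisted along the torus) via $f$ to obtain a bundle $f^*\mathcal{E}$ over $M$, and form the Dirac operator $D$ on $S_M \otimes f^*\mathcal{E}$. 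The Lichnerowicz–Schrödinger–Lurry–type formula gives $D^2 = \nabla^*\nabla + \tfrac14 R_g + \mathcal{R}^{f^*\mathcal{E}}$, where $\mathcal{R}^{f^*\mathcal{E}}$ is the curvature term of the twisting bundle. The key pointwise estimate, exactly as in Llarull, is that because $f_1$ is $1$-Lipschitz and the curvature operator of $\mathbb{S}^3$ is that of the round sphere with $R_{g_0} = 6$, one has $\mathcal{R}^{f^*\mathcal{E}} \geq -\tfrac14 R_{g_0}(f(x)) = -\tfrac32$ pointwise on the relevant spinors, so that $D^2 \geq \nabla^*\nabla + \tfrac14(R_g - 6) \geq \nabla^*\nabla$ with equality forcing rigidity.

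The delicate new ingredient compared to the compact Llarull theorem is the warping: $M$ is closed (compact without boundary, since $\Sigma^3$ is closed and $\mathbb{T}^n$ is compact) so there is no boundary term, but the metric is not a product, and the scalar curvature is $R_g = R_{g_\Sigma} - 2\sum_i \frac{\Delta_\Sigma u_i}{u_i} - 2\sum_{i<j} \frac{\langle \nabla u_i, \nabla u_j\rangle}{u_i u_j}$. I would argue: since $M$ is closed and spin with $\hat{A}$-type nonvanishing index coming from the non-zero degree of $f$ (Llarull's index computation shows the twisted Dirac operator has nonzero index precisely because $\deg f \neq 0$), there exists a nontrivial harmonic spinor $\psi$ with $D\psi = 0$. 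Integrating the Schrödinger–Lichnerowicz formula over the closed manifold $M$ gives
\[
0 = \int_M |\nabla \psi|^2 + \frac14 \int_M (R_g - 6)|\psi|^2 + \int_M \left(\mathcal{R}^{f^*\mathcal{E}} + \tfrac32\right)(\psi,\psi) \geq \int_M |\nabla\psi|^2 \geq 0.
\]
Hence $\nabla\psi \equiv 0$ (so $\psi$ is parallel and nowhere zero), $R_g \equiv 6$, and the curvature term vanishes identically, $\mathcal{R}^{f^*\mathcal{E}}(\psi,\psi) = -\tfrac32|\psi|^2$, meaning the $1$-Lipschitz map $f_1$ saturates the inequality on the support of $\psi$. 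Standard Llarull analysis of the equality case then forces $df_1$ to be a pointwise isometry on $\mathbb{S}^3$-directions (at least where $\psi \neq 0$, hence everywhere).

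From the existence of a parallel spinor $\psi$ on $(M, g)$ I would extract the product structure. A parallel spinor implies $(M,g)$ is Ricci-flat in a suitably strong sense along the torus directions, and more concretely, feeding $\nabla\psi = 0$ into the warped-product connection forms: the Clifford action of the torus-direction vectors $\partial_{\theta_i}/u_i$ combined with parallelism of $\psi$ forces $\nabla_\Sigma u_i = 0$, i.e. each $u_i$ is constant — this is the cleanest way to see $u_i \equiv r_i$. Alternatively, once $R_g \equiv 6$ and each Llarull equality holds, the identity $0 = -2\sum_i \frac{\Delta_\Sigma u_i}{u_i} - 2\sum_{i<j}\frac{\langle\nabla u_i,\nabla u_j\rangle}{u_i u_j} + (R_{g_\Sigma} - 6)$ combined with the Llarull equality case (which gives $R_{g_\Sigma} \geq 6$, even $= 6$, and $\Sigma^3$ round) can be integrated over $\Sigma$: $\int_\Sigma \sum_i \frac{\Delta_\Sigma u_i}{u_i} = \int_\Sigma \sum_i \frac{|\nabla u_i|^2}{u_i^2} \geq 0$, and together with the cross terms one forces all $\nabla u_i = 0$. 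Then $g = g_{\mathbb{S}^3} + \sum_i r_i^2 \, d\theta_i^2$, i.e. $(M,g) \cong \mathbb{S}^3 \times \mathbb{T}^1_{r_1} \times \cdots \times \mathbb{T}^1_{r_n}$. The main obstacle I anticipate is the bookkeeping in the twisted curvature estimate $\mathcal{R}^{f^*\mathcal{E}} \geq -\tfrac32$: one must carefully choose the bundle $\mathcal{E}$ (Llarull's bundle built from the two half-spinor bundles of $\mathbb{S}^3$, suitably tensored so that the index/degree argument works in dimension $n+3$), verify the Clifford-module structure is compatible with the warped metric on $M$, and confirm that the $1$-Lipschitz hypothesis on $f_1$ alone — not on the full $f$, which is only the identity on the torus — suffices to control the sphere-curvature contribution while the flat torus directions contribute nothing. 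Checking that the parallel spinor's existence is genuinely forced (nonvanishing index despite the non-product metric) is routine since the index is a homotopy invariant equal to its value in the product case $\Sigma = \mathbb{S}^3$, $u_i \equiv 1$.
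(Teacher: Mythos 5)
The overall strategy — twisted Dirac operator, Lichnerowicz formula, curvature estimate from the $1$-Lipschitz hypothesis, nonzero index from $\deg f\neq 0$, and then integrate to force equality — is the same as the paper's. But there is a genuine gap exactly at the step you flag as ``routine'': setting up a twisting bundle whose index is actually nonzero. The naive target $\mathbb{S}^3\times\mathbb{T}^n$ has vanishing Euler characteristic, so pulling back a spinor-type bundle from it produces index zero; no homotopy-invariance argument can rescue this, because the number you would be computing really is zero in the product model. The paper's essential new ingredient is the smash-product trick: pass from $M$ to $\hat M = M\times\mathbb{S}^1_{r_0}\times\cdots\times\mathbb{S}^1_{r_n}$ and compose $f$ with a $1$-Lipschitz degree-one map $h$ onto $(\mathbb{S}^3\wedge\mathbb{S}^1)\times(\mathbb{T}^1\wedge\mathbb{S}^1)^n\cong\mathbb{S}^4\times(\mathbb{S}^2)^n$, which does have nonzero Euler characteristic, so the Atiyah--Singer index of the corresponding twisted Dirac operator on $\hat M$ is a nonzero multiple of $\deg\tilde f$.

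A consequence you miss is that the clean pointwise estimate $\mathcal{R}^{f^*\mathcal{E}}\geq -\tfrac32$ does not hold as stated. After the smash-product construction, the curvature term splits into the $\mathbb{S}^3\to\mathbb{S}^4$ contribution, bounded below by $-\tfrac14\sum_{i\neq j=1}^3\lambda_i\lambda_j\|\phi\|^2\geq -\tfrac32\|\phi\|^2$, plus extra terms coming from the added circle directions of size $O(\sum_i c_i/r_i)$. These do \emph{not} vanish; they are only made arbitrarily small by taking the auxiliary radii $r_i\to\infty$. The paper's rigidity argument is precisely: if $R_g>6$ somewhere or some $\lambda_i<1$, then for $r_i$ large enough the positivity is strict, $\ker D_E^2=0$, contradicting the nonzero index. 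Without the large-radius limit you have no way to absorb the circle contributions, and without the smash-product target you have no nonzero index in the first place. So the step you defer as bookkeeping is in fact the heart of the proof.

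Once the index and curvature estimate are in place, your conclusion is fine: equality forces $R_g\equiv 6$ and $f_1$ a Riemannian isometry onto $\mathbb{S}^3$, hence $R_{g_\Sigma}=6$ and $\Sigma\cong\mathbb{S}^3$. Your alternative route to $u_i\equiv\text{const}$ via integrating $0=-2\sum u_i^{-1}\Delta_\Sigma u_i-2\sum_{i<j}\langle\nabla\log u_i,\nabla\log u_j\rangle$ over $\Sigma$ is essentially what the paper does and is correct; the parallel-spinor route would also work but is more than is needed after the scalar-curvature identity is available.
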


As a corollary, we have

\begin{corollary}[Chai-Pyo-Wan Theorem 1.3 in \cite{chai2023}]\label{Prop-rigid1}
Let $(\Sigma^3, g)$ be a compact orientable Riemannian manifold with the first eigenvalue
$$
\lambda_1(-\Delta +\frac12 R_g)\geq 3,
$$
and there is a $1$-Lipschitz map $f$: $\Sigma^3 \mapsto \mathbb{S}^3$ with non-zero degree,
then $(\Sigma^3, g)$ is isometric to $\mathbb{S}^3$.
\end{corollary}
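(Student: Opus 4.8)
The plan is to deduce the corollary from Proposition \ref{Prop-rigid} with $n=1$, by warping $(\Sigma^3,g)$ over a single circle with warping function equal to a positive first eigenfunction of $-\Delta_g+\tfrac12 R_g$; the point is that the spectral hypothesis $\lambda_1\geq 3$ is designed to become exactly the scalar curvature bound $R\geq 6$ on the resulting $4$-manifold, which is automatically spin.

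Concretely, first fix a smooth first eigenfunction $u$ of $L:=-\Delta_g+\tfrac12 R_g$ on the closed manifold $\Sigma^3$, so that $-\Delta_g u+\tfrac12 R_g u=\lambda_1 u$ with $\lambda_1\geq 3$; by the usual Perron-type argument for Schr\"odinger operators on compact manifolds we may take $u>0$. On $M^4:=\Sigma^3\times\mathbb{S}^1$ put the warped product metric $\bar g:=g+u^2\,d\theta^2$. Since $\Sigma^3$ is orientable it is parallelizable, hence spin, so $M^4=\Sigma^3\times\mathbb{S}^1$ is spin as well. The scalar curvature of a warped product over a one-dimensional flat fibre is
\[
R_{\bar g}=R_g-2\,\frac{\Delta_g u}{u},
\]
and substituting $\Delta_g u=\tfrac12 R_g\,u-\lambda_1 u$ gives $R_{\bar g}=2\lambda_1\geq 6$ identically on $M^4$.

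Next, produce the comparison map. The projection $\pi_\Sigma\colon(M^4,\bar g)\to(\Sigma^3,g)$ is $1$-Lipschitz, since $|d\pi_\Sigma(v)|_g\leq|v|_{\bar g}$ for every tangent vector $v$; hence $f_1:=f\circ\pi_\Sigma\colon M^4\to\mathbb{S}^3$ is $1$-Lipschitz. Setting $F:=(f_1,\operatorname{id}_{\mathbb{S}^1})\colon M^4=\Sigma^3\times\mathbb{S}^1\to\mathbb{S}^3\times\mathbb{T}^1$, we have $\deg F=\deg f\neq 0$, and $F$ has precisely the form required by Proposition \ref{Prop-rigid} (with $n=1$, $u_1=u$). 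Since $R_{\bar g}\geq 6$, that proposition applies and yields that $u$ is a positive constant and that $(M^4,\bar g)$ is isometric to $\mathbb{S}^3\times\mathbb{T}^1_{r_1}$; in particular $\lambda_1=3$ and $R_g\equiv 6$. Because $u$ is constant, $\bar g$ is a genuine Riemannian product $g\times(\text{flat }\mathbb{S}^1)$, so $(\Sigma^3,g)$ is a round metric on a spherical space form. To upgrade this to an isometry with $\mathbb{S}^3$ itself, one invokes the proof of Proposition \ref{Prop-rigid}, which (as in Llarull's argument) forces $f_1$ to be a local isometry; a degree-nonzero local isometry from the closed manifold $\Sigma^3$ to $\mathbb{S}^3$ is a Riemannian covering, hence—$\mathbb{S}^3$ being simply connected—an isometry, so $(\Sigma^3,g)$ is isometric to $\mathbb{S}^3$.

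I do not expect a substantive obstacle: the entire content lies in the bookkeeping of the second step, namely recognizing that warping by a first eigenfunction of $-\Delta_g+\tfrac12 R_g$ over one circle turns $\lambda_1\geq 3$ into the sharp scalar curvature bound $R_{\bar g}\geq 6$, and in checking that $(M^4,\bar g)$ and $F$ satisfy all hypotheses of Proposition \ref{Prop-rigid} (spin, warped-product form $g_{\Sigma}+u^2d\theta^2$, non-zero degree, $1$-Lipschitz first factor). The only mildly delicate point is the last step—extracting $(\Sigma^3,g)\cong\mathbb{S}^3$ from the product rigidity—which relies on the fact that the Dirac-operator proof of Proposition \ref{Prop-rigid} actually produces a pointwise isometry on the $\Sigma$-slices rather than merely the isometry type of the product.
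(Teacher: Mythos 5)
Your proposal is correct and is essentially the argument the paper leaves implicit: the Corollary is stated without proof immediately after Proposition~\ref{Prop-rigid}, and the intended derivation is exactly the warped-product trick you use (which, with the Jacobi operator in place of $-\Delta_g+\tfrac12R_g$, is also the mechanism the paper uses in the proof of Theorem~\ref{comprigidity1}). Your computation $R_{\bar g}=R_g-2u^{-1}\Delta_g u=2\lambda_1\ge 6$, the spin remark (orientable $3$-manifolds are parallelizable), the verification that $F=(f\circ\pi_\Sigma,\mathrm{id})$ satisfies the hypotheses of Proposition~\ref{Prop-rigid} with $n=1$, and the final extraction of $(\Sigma^3,g)\cong\mathbb{S}^3$ via the fact that the Dirac argument forces $f_1$ to be a local isometry, hence a covering onto the simply connected $\mathbb{S}^3$, all check out.
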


\begin{proof}[Proof of Proposition \ref{Prop-rigid}]
Since $\mathbb{S}^{3}\times \mathbb{T}^{n}$ has zero Euler characteristic, we need to change it into a manifold with non-zero Euler characteristic. Therefore we consider
\begin{multline}\label{Eqn-map}
  M\times \mathbb{S}^{1}_{r_0}\times \mathbb{S}^{1}_{r_1}\times\cdots\times \mathbb{S}^{1}_{r_{n}}\overset{f\times \frac{1}{r_0}id\times \cdots\times\frac{1}{r_n}id}{\longrightarrow}\mathbb{S}^{3}\times\mathbb{T}^{n}\times\underset{n+1}{\underbrace{\mathbb{S}^{1}\times\cdots\times\mathbb{S}^{1}}}\\
  \overset{h}{\to}(\mathbb{S}^{3}\wedge\mathbb{S}^{1})\times\underset{n}{\underbrace{(\mathbb{T}^{1}\wedge\mathbb{S}^{1})\times\cdots\times (\mathbb{T}^{1}\wedge\mathbb{S}^{1})}}\cong\mathbb{S}^{4}\times \underset{n}{\underbrace{\mathbb{S}^{2}\times\cdots\times\mathbb{S}^{2}}}
\end{multline}
where $\mathbb{S}^{1}_{r_{i}}$ is the one dimensional  circle of radius $r_{i}$ for $i=\{0,1,\cdots,n\}$, $f\times \frac{1}{r_0}id\times \cdots\times\frac{1}{r_n}id$ is defined as
\[
  (f\times \frac{1}{r_0}id\times\cdots \times \frac{1}{r_n}id)(p,t_{0},\cdots,t_{n})=(f(p),\frac{t_{0}}{r_0},\cdots,\frac{t_{n}}{r_n})
\]
for $(p,t_{0},\cdots,t_{n})\in M\times \mathbb{S}^{1}_{r_0}\times\cdots\times \mathbb{S}^{1}_{r_n},$ and where $h$ is a $1$-Lipschitz map into the smash product of non-zero degree see Figure 1.

\begin{center}
\begin{tikzpicture}
\draw (-4,0.1) circle (1);
\draw (-1,0.1) circle (1);
\draw (3,0.1) circle (1);
\draw (6,0.1) circle (1);
\draw [rotate around={180:(-4,0.1)}] (-3,0.1) arc (0:180:1 and .2);
\draw [densely dashed] (-3,0.1) arc (0:180:1 and .2);
%%%%%%%%%%%%%%%%%%%%%%%%%%%%%%%%%%%%%%%%%%%%%%%
\node [above] at (-4,1.1) {$\mathbb{S}^{3}$};
\node [above] at (-1,1.1) {$\mathbb{T}^{1}$};
\node [above] at (3,1.1) {$\mathbb{S}^{1}$};
\node [above] at (6,1.1) {$\mathbb{S}^{1}$};
%%%%%%%%%%%%%%%%%%%%%%%%%%%%%%%%%%%%%%%%%%%%
\node  at (-2.5,0.1) {$\times$};
\node  at (1,0.1) {$\times$};
\node  at (4.5,0.1) {$\times$};
\draw [->] (-4,-1)--(-1.5,-2);
\draw [->] (3,-1)--(-1,-2);
\node  at (-1.25,-1.4) {smash product};
%%%%%%%%%%%%%%%%%%%%%%%%%%%%%%%%%
\draw [->] (-1,-1)--(3,-2);
\draw [->] (6,-1)--(3.5,-2);
\node  at (3.25,-1.4) {smash product};
%%%%%%%%%%%%%%%%%%%%%%%%%%%%%%%%%%%%
\node  at (-1.25,-2.25) {$\mathbb{S}^{3}\wedge \mathbb{S}^{1}$};
\node  at (3.25,-2.25) {$\mathbb{T}^{1}\wedge \mathbb{S}^{1}$};
\node  at (1,-2) {$\times$};
\node  at (1,-2.7) {Figure 1: Example of smash product for $n=1$.};
%\node [above] at (-1,0.5) {$r_{i}$};
%\node [below] at (-3,-1.5) {Removing a conical neighborhood};
\end{tikzpicture}
\end{center}

Note that the metric on  $M\times\mathbb{S}_{r_0}^{1}\times \cdots\times \mathbb{S}_{r_n}^{1}$ is $ g+ds^2_{0}+\cdots+ds^2_{n}$ where $ds_{i}^2$ is the standard metric on $\mathbb{S}_{r_i}^{1}$, the metric on $\mathbb{S}^{3}\times\mathbb{T}^{n}\times\mathbb{S}^{1}\times\mathbb{S}^{1}\times\cdots\times\mathbb{S}^{1}$ is $ g_{0}+ds^2+\cdots +ds^2$ where $ds^2$ is the standard metric on $\mathbb{S}^{1}$, and the metric on $\mathbb{S}^{4}\times \mathbb{S}^{2}\times\cdots\times\mathbb{S}^{2}$ is $ \tilde{g}=g_{\mathbb{S}^{4}}+\sum_{i=1}^{n}g_{\mathbb{S}^{2}}$.

Define $\tilde{f}=h\circ(f\times \frac{1}{r_0}id\times\cdots \times \frac{1}{r_n}id)$. Then $\tilde{f}$ is of non-zero degree from $M\times\mathbb{S}_{r_0}^{1}\times\cdots\times \mathbb{S}_{r_n}^{1}$ to  $\mathbb{S}^{4}\times \mathbb{S}^{2}\times\cdots\times \mathbb{S}^{2}$. It is also $1$-Lipschitz for the $\Sigma^{3}$ part, since for vector $v$ tangent to $\Sigma^{3}$,
\begin{eqnarray*}
% \nonumber % Remove numbering (before each equation)
  \|\tilde{f}_{\ast}(v)\|&=&\|h_{\ast}f_{\ast}(v)\| \leq\|f_{\ast}v\|\\
  &=&\|(f_{1})_{\ast}v\|\leq \|v\|.
\end{eqnarray*}

Let $\hat{M}=M^{n+3}\times \mathbb{S}^1_{r_{0}}\times\cdots\times\mathbb{S}^{1}_{r_{n}}$. Note that the dimension of $\hat{M}$ is $2n+4$, which is even dimension. We adopt Llarull's setting as described in \cite{LM1998}. Consider a complex spinor bundles $S$ over $\hat{M}$ and $E_{0}$ over $\mathbb{S}^{4}\times \mathbb{S}^{2}\times\cdots\times \mathbb{S}^{2}$, respectively. And consider the bundle $S\otimes E$ over $\hat{M}$ for $E=\tilde{f}^{\ast}E_{0}$.

We can choose a basis $\{e_1,e_{2},e_{3},\cdots,e_{n+3},e'_{0}\cdots,e'_{n}\}$ of $g+ds^2_{0}+\cdots+ds^2_{n}$-orthonormal  around $x\in \hat{M}$ such that
$e_1,\cdots,e_{n+3}$ are tangent to $M^{n+3}$ and $e'_{i}$ are tangent to $\mathbb{S}_{r_i}^{1}$ for $i=0,\cdots, n$. Then choose $\tilde{g}$-orthonormal basis
\[\{\epsilon_{1},\epsilon_{2},\epsilon_{3},\cdots,\epsilon_{n+3},\epsilon'_{0},\cdots,\epsilon'_{n}\}\]
around $\tilde{f}(x)$ in $\mathbb{S}^{4}\times \mathbb{S}^{2}\times\cdots\times\mathbb{S}^{2}$ such that $\epsilon_{1},\epsilon_{2},\epsilon_{3},\epsilon'_{0}$ are tangent to $\mathbb{S}^{4}$ and $\epsilon_{3+i},\epsilon'_{i}$ are tangent to $\mathbb{S}^{2}$ for $i=0,\cdots,n$. Moreover, we can require that there exist positive scalars $\{\lambda_{i}\}_{i=1}^{n+3}$ such that $\tilde{f}_{\ast}e_{i}=\lambda_{i}\epsilon_{i}$ and $\{\tau_{i}\}_{i=0}^{n}$ such that $\tilde{f}_{\ast}e'_{i}=\tau_{i}\epsilon'_{i}$. For $\lambda_{i}$ and $1\leq i\leq 3$, since
\[\lambda_{j}^{2}=\tilde{g}(\lambda_{j}\epsilon_{j},\lambda_{j}\epsilon_{j})=\tilde{g}(\tilde{f}_{\ast}e_{j},\tilde{f}_{\ast}e_{j}),\]
then we have
\[\lambda_{j}^{2}=\tilde{g}(\tilde{f}_{\ast}e_{j},\tilde{f}_{\ast}e_{j})\leq g_{0}(f_{\ast}e_{i},f_{\ast}e_{i})\leq g(e_{i},e_{i})=1\]
and $0\leq\lambda_{i}\leq 1$.

For $\tau_{i}$ and $0\leq i\leq n$, we also have
\[\tau_{i}^{2}=\tilde{g}(\tilde{f}_{\ast}e'_{i},\tilde{f}_{\ast}e'_{i})\leq ds^2_{i}(\frac{e'_{i}}{r_{i}},\frac{e'_{i}}{r_{i}})=\frac{1}{r_{i}^2}.\]

For the twisted bundle $S\otimes E$ and its Dirac operator $D_{E}$, by the B-L-W-S formula
\begin{eqnarray}
% \nonumber % Remove numbering (before each equation)
  D_{E}^2=\nabla^{\ast}\nabla+\frac{R_{g+ds^{2}_{0}+\cdots+ds^{2}_{n}}}{4}+\mathcal{R}^{E}.
\end{eqnarray}
The curvature term $\mathcal{R}^{E}$ can be expressed as(see page 68 in \cite{LM1998}),
\begin{eqnarray*}
% \nonumber % Remove numbering (before each equation)
  \langle\mathcal{R}^{E}\phi,\phi\rangle &=& \frac{1}{4}\sum_{i\neq j=1}^{3}\sum_{k,l}\sum_{\alpha,\beta}\lambda_{i}\lambda_{j}a_{\alpha\beta}a_{kl}\langle e_{i}\cdot e_{j}\cdot \sigma_{\alpha},\sigma_{k}\rangle\times \langle \epsilon_{j}\cdot \epsilon_{i}\cdot v_{\beta},v_{l}\rangle\\
  && +\frac{1}{4}\sum_{i=1}^{3}\sum_{k,l}\sum_{\alpha,\beta}\lambda_{i}\tau_{0}a_{\alpha\beta}a_{kl}\langle e_{i}\cdot e'_{0}\cdot \sigma_{\alpha},\sigma_{k}\rangle\times \langle \epsilon'_{0}\cdot \epsilon_{i}\cdot v_{\beta},v_{l}\rangle\\
  && +\frac{1}{4}\sum_{j=1}^{3}\sum_{k,l}\sum_{\alpha,\beta}\tau_{0}\lambda_{j}a_{\alpha\beta}a_{kl}\langle e'_{0}\cdot e_{j}\cdot \sigma_{\alpha},\sigma_{k}\rangle\times \langle \epsilon_{j}\cdot \epsilon'_{0}\cdot v_{\beta},v_{l}\rangle\\
  && +\frac{1}{4}\sum_{j=1}^{n}\sum_{k,l}\sum_{\alpha,\beta}\lambda_{3+j}\tau_{j}a_{\alpha\beta}a_{kl}\langle e_{3+j}\cdot e'_{j}\cdot \sigma_{\alpha},\sigma_{k}\rangle\times \langle \epsilon'_{j}\cdot \epsilon_{3+j}\cdot v_{\beta},v_{l}\rangle\\
  &&+\frac{1}{4}\sum_{j=1}^{n}\sum_{k,l}\sum_{\alpha,\beta}\tau_{j}\lambda_{3+j}a_{\alpha\beta}a_{kl}\langle e'_{j}\cdot e_{3+j}\cdot \sigma_{\alpha},\sigma_{k}\rangle\times \langle \epsilon_{3+j}\cdot \epsilon'_{j}\cdot v_{\beta},v_{l}\rangle
\end{eqnarray*}
where $\langle,\rangle=g(,)$.
And, by Lemma 4.5 in \cite{LM1998}, we have
\begin{eqnarray*}
% \nonumber % Remove numbering (before each equation)
  \langle\mathcal{R}^{E}\phi,\phi\rangle &\geq& -\frac{1}{4}\sum_{i\neq j=1}^{3}\lambda_{i}\lambda_{j}\|\phi\|^2-\frac{1}{2}\left(\sum_{i=1}^{3}\lambda_{i}\tau_{0}\|\phi\|^2\right)-\left(\sum_{i=1}^{n}\frac{1}{2}\lambda_{i+3}\tau_{i}\|\phi\|^2\right)\\
  &\geq&-\frac{1}{4}\sum_{i\neq j=1}^{3}\lambda_{i}\lambda_{j}\|\phi\|^2-\left(\sum_{i=0}^{n}\frac{c_{i}}{r_{i}}\right)\|\phi\|^2
\end{eqnarray*}
for some constant $c_{i}$ independent $r_{i}$.
Hence, we have
\begin{eqnarray*}
% \nonumber % Remove numbering (before each equation)
  \int_{\hat{M}}\langle D^{2}_{E}\phi,\phi\rangle &\geq&\int_{\hat{M}}\|\nabla\phi\|^2+\int_{\hat{M}}\frac{R_{g}}{4}\langle \phi,\phi\rangle\\
  &&-\frac{1}{4}\sum_{i\neq j=1}^{3}\int_{\hat{M}}\lambda_{i}\lambda_{j}\langle \phi,\phi\rangle-\left(\sum_{i=0}^{n}\frac{c_{i}}{r_{i}}\right)\int_{\hat{M}}\langle \phi,\phi\rangle.
\end{eqnarray*}

Therefore, since $R_{g}\geq 6$, if $R_{g}\neq6$ for some point $p\in M$ or $\lambda_{i}<1$ for some $i=1,2,3$, we can choose $r_{i}$ sufficiency large such that $\langle D^{2}_{E}\phi,\phi\rangle>0$.
 Then $ker(D^2_{E})=0$.  %Following a similar proof to that in the Even dimensional case from \cite{LM1998}, a contradiction arises. Thus $R_{g}\equiv 6$ and $\Sigma\cong \mathbb{S}^{3}$. Hence $R_{g_{\Sigma}}=6$. However, by direct computing,
%\[R_{g}=R_{g_{\Sigma}}-2\sum_{i=1}^{n}u^{-1}_{i}\Delta_{g_{\Sigma}}u_{i}-2\sum_{\textcolor{red}{1\leq i<j\leq n}}\langle \nabla_{g_{\Sigma}}\log u_{i},\nabla_{g_{\Sigma}}\log u_{j}\rangle.\]
%Then we can deduce that $u_{i}$ is constant for each $i\in \{1,2,\cdots,n\}$.

Now we use $ker(D^2_{E})=0$ to reach a contradiction. This process is almost same with page 65 in \cite{LM1998}.  Recall that $\hat{M}$ is even dimension. The bundle $S$ and $E_{0}$ can be grading as $S=S^{+}\oplus S^{-}$ and $E_{0}=E_{0}^{+}\oplus E_{0}^{-}$ respectively. And $E=E^{+}\oplus E^{-}$ for $E^{\pm}=f^{\ast}E^{\pm}_{0}$. We consider the operator $D_{E^{+}}=D_{E}|_{S\otimes E^{+}}=D_{E}|_{S\otimes f^{\ast}E^{+}_{0}}$.
  Since the operator $D_{E}:\Gamma((S\otimes E^{+})\oplus(S\otimes E^{-}))\to \Gamma((S\otimes E^{+})\oplus(S\otimes E^{-}))$ preserves the direct sum and $ker(D_{E})=ker(D^2_{E})=0$, then
$$0=ker(D_{E|_{S\otimes E^{+}}})=ker(D_{E}).$$
Because $D_{E^{+}}=D^{+}_{E^{+}}\oplus D^{-}_{E^{+}}$, where $D^{\pm}_{E^{+}}:S^{\pm}\otimes E^{+}\to S^{\mp}\otimes E^{+}$ and $0=ker(D_{E^{+}})=ker(D^{+}_{E^{+}})\oplus ker(D^{-}_{E^{+}})$, there is $ker(D^{+}_{E^{+}})=ker(D^{-}_{E^{+}})=0$. By definition, we can write the index of $D_{E^{+}}$ as
\begin{eqnarray}\label{Indexzero}
  Index(D_{E^{+}})=dim(ker(D^{+}_{E^{+}}))-dim(ker(D^{-}_{E^{+}}))=0.
\end{eqnarray}

While, we can prove $Index(D_{E^{+}})\neq 0$. Let $ch(E^{+})$ be the Chern character of $E^{+}$ and $\hat{A}$ be the total $\hat{A}$-class of $\hat{M}$. For the definition of Chern character and $\hat{A}$-class one can see Chapter III section 11 in \cite{Lawson1989}. By the Atiyah-Singer Index Theorem, there is
\[Index(D_{E^{+}})=\{ch(E^{+})\hat{A}(\hat{M})\}[\hat{M}].\]

Let $ch^{i}$ be the $i^{th}$ symmetric polynomial in the Chern class $c_{i}$, with $ch^{i}\in H^{2i}(\hat{M})$.  Then $ch(E^{+})=dim(E^{+})+ch^{1}(E^{+})+\cdots+ch^{n+2}(E^{+})$. For $\mathbb{S}^{4}\times\mathbb{S}^{2}\times\cdots\times \mathbb{S}^{2}$ the Chern  character of the vector bundle $E^{+}_{0}$ is given by
\[ch(E^{+})=dim(E^{+}_{0})+\cdots+\frac{1}{(n+1)!}c_{n+2}(E^{+}_{0}).\]
Then, for $E^{+}=f^{\ast}(E^{+}_{0})$, there is
\begin{eqnarray*}
% \nonumber % Remove numbering (before each equation)
  ch(E^{+})&=&dim(E^{+}_{0})+\cdots+\frac{1}{(\frac{2n+4}{2}-1)!}\tilde{f}^{\ast}c_{n+2}(E^{+}_{0})\\
  &=&2^{2n+3}+\cdots+\frac{1}{(n+1)!}\tilde{f}^{\ast}c_{n+2}(E^{+}_{0}).
\end{eqnarray*}
Since $\hat{M}=\Sigma^{3}\times \mathbb{T}^{n}\times \mathbb{S}_{r_{0}}^{1}\times\cdots\times  \mathbb{S}_{r_{n}}^{1}$ and $\hat{A}(\Sigma^{3})=1$, $\hat{A}(\mathbb{S}^{1}_{r_{i}})=1$. Then, we have $\hat{A}(\hat{M})=1$. As a result,
\begin{eqnarray*}
% \nonumber % Remove numbering (before each equation)
  Index(D_{E^{+}}) &=& \{(2^{2n+3}+\cdots+\frac{1}{(n+1)!}\tilde{f}^{\ast}c_{n+2}(E^{+}_{0}))\hat{A}(\hat{M})\}[\hat{M}]\\
  &=&\frac{1}{2}\tilde{f}^{\ast}c_{n+2}(E^{+}_{0})[\hat{M}]=\frac{1}{2}\int_{\hat{M}} \tilde{f}^{\ast}c_{n+2}(E^{+}_{0})\\
  &=&\frac{1}{2}deg(\tilde{f})\int_{\mathbb{S}^{4}\times \mathbb{S}^{2}\times\cdots\times \mathbb{S}^{2}} c_{n+2}(E^{+}_{0}),
\end{eqnarray*}
Since $\chi(\mathbb{S}^{4})$ and $\chi(\mathbb{S}^{2})$ are non-zero, $\chi(\mathbb{S}^{4}\times \mathbb{S}^{2}\times\cdots\times\mathbb{S}^{2})=\chi(\mathbb{S}^{4})\cdot \chi(\mathbb{S}^{2})\cdot\cdots\cdot \chi(\mathbb{S}^{2})\neq 0$. Thus, by $deg(\tilde{f})\neq 0$,
\[\frac{1}{2}deg(\tilde{f})\int_{\mathbb{S}^{4}\times \mathbb{S}^{2}\times\cdots\times \mathbb{S}^{2}} c_{n+2}(E^{+}_{0})\neq 0,\]
because it is non-zero number multiple of the
Euler number of $\mathbb{S}^{4}\times \mathbb{S}^{2}\times\cdots\times\mathbb{S}^{2}$.
This leads to a contradiction with equation (\ref{Indexzero}). Thus $R_{g}\equiv 6$ and $f_{1}$ is isometry. Hence $\Sigma\cong \mathbb{S}^{3}$ and $R_{g_{\Sigma}}=6$. However, by direct computing,
\[R_{g}=R_{g_{\Sigma}}-2\sum_{i=1}^{n}u^{-1}_{i}\Delta_{g_{\Sigma}}u_{i}-2\sum_{1\leq i<j\leq n}\langle \nabla_{g_{\Sigma}}\log u_{i},\nabla_{g_{\Sigma}}\log u_{j}\rangle.\]
Thus, we can deduce that $u_{i}$ must be constant for each $i\in \{1,2,\cdots,n\}$.
\end{proof}

%%%%%%%%%%%%%%%%%%%%%%%%%%%%%%%%%%%%%%%%%%%%%%
%%%%%%%%%%%%%%%%%%%%%%%%%%%%%%%%%%%%%%%%%%%%%%
\section{Proof of main results}\label{Sec-main-result}
In this section, we will proceed with the proof of the main theorems in turn. The approach will be sequential, commencing with the compact case and progressing to the noncompact case, followed by a transition from lower to higher codimension. The desired results primarily encompass two aspects: nonexistence and rigidity, which will be individually discussed in various cases.
\subsection{Proof of Theorem \ref{comprigidity1}}
In this subsection, we focus on the compact case, in which we can establish the rigidity result for such manifolds. We prove Theorem \ref{comprigidity1} , specifically addressing the case $n=4$. For the case $5\leq n\leq 7$, the argument follows analogously due to the dimension reduction trick.

The proof of Theorem \ref{comprigidity1} begins with identifying a minimal hypersurface $\Sigma^3$ in $(M^4, g)$, followed by verifying that $\Sigma^3$ is isometric to the standard unit sphere $\mathbb{S}^3$. Subsequently, we construct a foliation near $\Sigma^3$, with each slice being isometric to the standard unit sphere $\mathbb{S}^3$. This establishes the local isometry of $(M^4, g)$ to $\mathbb{S}^3\times \mathbb{S}^{1}$.

Let us beginning with the following lemma, which enables us to construct a foliation of hypersurfaces with constant mean curvature in $M^{4}$ near $\Sigma$. 
\begin{lemma}[Bray-Brendle-Neves Proposition 3.2 \cite{HSA2010} or Zhu Lemma 3.3 \cite{Zhu2020}]\label{Lem-foliation-construct}
  If $\Sigma$ is an area-minimizing hypersurface in $M^{4}$ with vanished second fundamental form and vanished normal Ricci curvature, then we can construct a local foliation $\{\Sigma_{t}\}_{-\epsilon\leq t\leq \epsilon}$ in $M^{4}$ such that $\Sigma_{t}$ are of constant mean curvatures and $\Sigma_{0}=\Sigma$.
\end{lemma}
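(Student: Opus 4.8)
The plan is to realise hypersurfaces near $\Sigma$ as normal graphs and to solve a prescribed-mean-curvature equation via the implicit function theorem, paying attention to the fact that the relevant linearised operator has a one–dimensional kernel. Fix the unit normal $\nu$ along $\Sigma$ and, using the normal exponential map, parametrise hypersurfaces near $\Sigma$ by $\Sigma_{w}=\{\exp_{x}(w(x)\nu(x)):x\in\Sigma\}$ for $w\in C^{2,\alpha}(\Sigma)$ of small norm. The assignment $w\mapsto H(\Sigma_{w})\in C^{0,\alpha}(\Sigma)$, the mean curvature of $\Sigma_w$ with respect to the pushed-forward normal, is a smooth quasilinear elliptic operator whose linearisation at $w=0$ is the Jacobi operator $L_{\Sigma}=\Delta_{\Sigma}+|A_{\Sigma}|^{2}+\Ric_{g}(\nu,\nu)$. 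By hypothesis $A_{\Sigma}\equiv 0$ and $\Ric_{g}(\nu,\nu)\equiv 0$ on $\Sigma$, so $L_{\Sigma}=\Delta_{\Sigma}$, whose kernel on the closed manifold $\Sigma$ is exactly the constants.

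Because of this kernel one cannot solve $H(\Sigma_{w})=0$ directly; instead I would work modulo constants. Introduce
\[
\Phi:(-\epsilon_{0},\epsilon_{0})\times\mathcal{U}\longrightarrow\Big\{f\in C^{0,\alpha}(\Sigma):\int_{\Sigma}f\,d\mathcal{H}^{3}=0\Big\},\qquad
\Phi(t,u)=H(\Sigma_{t+u})-\frac{1}{\mathcal{H}^{3}(\Sigma)}\int_{\Sigma}H(\Sigma_{t+u})\,d\mathcal{H}^{3},
\]
where $\mathcal{U}$ is a small neighbourhood of $0$ in $\{u\in C^{2,\alpha}(\Sigma):\int_{\Sigma}u\,d\mathcal{H}^{3}=0\}$. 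Then $\Phi(0,0)=0$, and $D_{u}\Phi(0,0)$ is $\Delta_{\Sigma}$ restricted to mean-zero functions, an isomorphism onto the mean-zero $C^{0,\alpha}$ functions. The implicit function theorem produces a $C^{1}$ curve $t\mapsto u(t)\in\mathcal{U}$ with $u(0)=0$ and $\Phi(t,u(t))=0$ for $|t|$ small; equivalently, writing $w(t)=t+u(t)$, the hypersurface $\Sigma_{w(t)}$ has constant mean curvature for each such $t$, and $\Sigma_{w(0)}=\Sigma$.

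It remains to check that $\{\Sigma_{w(t)}\}$ is a genuine foliation, i.e.\ that the normal speed $\partial_{t}w(t)=1+u'(t)$ stays positive for $t$ small; it suffices to show $u'(0)=0$. Differentiating $\Phi(t,u(t))=0$ at $t=0$ gives $D_{t}\Phi(0,0)+\Delta_{\Sigma}\,u'(0)=0$. Now $D_{t}\Phi(0,0)$ is the variation of $H(\Sigma_{t})$ with unit normal speed minus its average, which by the first-variation (Jacobi) formula equals $L_{\Sigma}(1)-\mathrm{avg}\big(L_{\Sigma}(1)\big)=\big(|A_{\Sigma}|^{2}+\Ric_{g}(\nu,\nu)\big)-\mathrm{avg}(\cdots)=0$, again using the hypotheses. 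Hence $\Delta_{\Sigma}u'(0)=0$ with $\int_{\Sigma}u'(0)\,d\mathcal{H}^{3}=0$, forcing $u'(0)=0$, so $\partial_{t}w(0)=1$. Shrinking $\epsilon_{0}$ to some $\epsilon$, the leaves $\Sigma_{t}:=\Sigma_{w(t)}$, $-\epsilon\le t\le\epsilon$, are embedded, have constant mean curvature, satisfy $\Sigma_{0}=\Sigma$, and sweep out a tubular neighbourhood of $\Sigma$ in $M^{4}$.

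I expect the main subtlety to be precisely the degeneracy of $L_{\Sigma}$: both the passage to mean-zero functions in the definition of $\Phi$ and the verification $u'(0)=0$ rely essentially on the identity $|A_{\Sigma}|^{2}+\Ric_{g}(\nu,\nu)\equiv 0$ on $\Sigma$, which is exactly what the hypothesis on $\Sigma$ supplies; without it one could at best prescribe mean curvature in a weaker sense and would not obtain a foliation with $\Sigma$ as a leaf and unit initial normal speed.
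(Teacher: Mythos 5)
Your proposal is correct and follows essentially the same implicit-function-theorem argument used in the references the paper cites (Bray--Brendle--Neves, Proposition 3.2 in \cite{HSA2010}, and Zhu, Lemma 3.3 in \cite{Zhu2020}); the paper itself does not reprove this lemma. The only tacit assumption worth flagging is that $\Sigma$ is connected (so that $\ker\Delta_{\Sigma}$ is exactly one-dimensional, spanned by constants), which holds in the paper's application.
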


Now, let us recall Theorem \ref{comprigidity1} specifically for $n=4$.

\begin{theorem}%\label{comprigidity1}	
Let $(M^4, g)$ be a $4$-dimensional compact connected orientable Riemannian manifold without boundary, satisfying $R_g\geq 6$. Assume there is a non-zero degree and $1$-Lipschitz map $f:M^4\to \mathbb{S}^3\times \mathbb{S}^{1}$. Then $(M^4, g)$ is locally isometric to $\mathbb{S}^3\times \mathbb{S}^{1}$. Furthermore, for any $x\in\mathbb{S}^{1} $, $P \circ f(\cdot,x): \mathbb{S}^3 \to \mathbb{S}^3 $ is isometric.
\end{theorem}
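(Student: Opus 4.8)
The plan is to reduce the statement to Proposition \ref{Prop-rigid} by producing a suitable area-minimizing hypersurface, then to upgrade the pointwise rigidity to a local product structure via the foliation in Lemma \ref{Lem-foliation-construct}. First I would exploit the map $f\colon M^4\to\mathbb S^3\times\mathbb S^1$ of non-zero degree. Composing with the projection $\mathbb S^3\times\mathbb S^1\to\mathbb S^1$ gives a non-trivial class in $H^1(M^4;\mathbb Z)$, which is Poincar\'e dual to an embedded, closed, orientable hypersurface $\Sigma^3\subset M^4$; one chooses $\Sigma^3$ to be area-minimizing in its homology class (here $n=4\le 7$, so by the standard regularity theory for area-minimizing hypersurfaces in the relevant dimension $\Sigma^3$ is smooth and embedded). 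By the classical second-variation argument recalled at the start of Section \ref{Sec-minimal-surface}, the Jacobi operator $J_\Sigma=-\Delta_\Sigma+\tfrac12(R_\Sigma-R_M-|A_\Sigma|^2)$ is nonnegative on $\Sigma^3$.

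The crucial point is to run Proposition \ref{Prop-rigid} with $n=1$: I must produce on $\Sigma^3$ a warped-product metric $g_\Sigma+u^2d\theta^2$ (warping $\Sigma^3$ with a circle) whose scalar curvature is $\ge 6$, together with a non-zero-degree map to $\mathbb S^3\times\mathbb T^1$ that is $1$-Lipschitz on the $\mathbb S^3$-factor. The warping function $u$ should be taken to be the first eigenfunction of the Jacobi operator $J_\Sigma$ (positive, by Perron--Frobenius), so that the scalar curvature computation $R_{\bar g}=R_\Sigma-2u^{-1}\Delta_\Sigma u$ together with nonnegativity of $J_\Sigma$ and $R_M\ge 6$ yields $R_{\bar g}\ge R_M+|A_\Sigma|^2\ge 6$; equivalently one checks $\lambda_1(-\Delta_\Sigma+\tfrac12 R_\Sigma)\ge 3$ and invokes Corollary \ref{Prop-rigid1}. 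The map needed for Proposition \ref{Prop-rigid} is obtained by restricting $f$ to $\Sigma^3$, post-composing with the projection $P\colon\mathbb S^3\times\mathbb S^1\to\mathbb S^3$ to get $P\circ f|_{\Sigma}\colon\Sigma^3\to\mathbb S^3$, and crossing with the identity on the circle factor; the degree of this map is (up to sign) the original $\deg f$ times the intersection number of $\Sigma^3$ with the dual $\mathbb S^1$-fibre, hence non-zero, and $1$-Lipschitz-ness on the $\mathbb S^3$ part is inherited from $f$ being $1$-Lipschitz. Proposition \ref{Prop-rigid} (or Corollary \ref{Prop-rigid1}) then forces $(\Sigma^3,g_\Sigma)$ to be isometric to the round $\mathbb S^3$, $R_M\equiv 6$ along $\Sigma$, $A_\Sigma\equiv 0$, $\Ric_M(\nu,\nu)\equiv 0$ along $\Sigma$, and $P\circ f|_\Sigma\colon\mathbb S^3\to\mathbb S^3$ an isometry.

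With $\Sigma^3$ now known to be a totally geodesic round $\mathbb S^3$ with vanishing normal Ricci curvature, Lemma \ref{Lem-foliation-construct} produces a local foliation $\{\Sigma_t\}_{|t|\le\epsilon}$ by constant-mean-curvature hypersurfaces with $\Sigma_0=\Sigma$. The standard Bray--Brendle--Neves-type argument (using $R_M\ge 6$, the constancy of mean curvature, and the fact that $\mathbb S^3$ is the equality case of the relevant rigidity/eigenvalue inequality) shows each $\Sigma_t$ has $H_{\Sigma_t}\equiv 0$, vanishing second fundamental form, and induced metric isometric to the round $\mathbb S^3$; the normal field to the foliation has unit length and the metric splits as $g=dt^2+g_{\Sigma_t}$ with all $g_{\Sigma_t}$ round, giving the local isometry $(M^4,g)\cong\mathbb S^3\times\mathbb S^1$ on a neighbourhood of $\Sigma$ (and then everywhere by the usual continuation argument, since the set where $M$ looks locally like the product is open and closed). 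Finally, for $x\in\mathbb S^1$ the statement that $P\circ f(\cdot,x)\colon\mathbb S^3\to\mathbb S^3$ is an isometry follows because, in the product coordinates, each $\mathbb S^3\times\{x\}$ is (a translate of) one of the slices $\Sigma_t$ and $f$ restricted to it is, up to the $\mathbb S^1$-component, the isometry already obtained on $\Sigma$; more carefully, $P\circ f(\cdot,x)$ is $1$-Lipschitz of non-zero degree from round $\mathbb S^3$ to round $\mathbb S^3$, so by Llarull's theorem it is an isometry.

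The main obstacle I anticipate is the passage from the pointwise/infinitesimal rigidity at $\Sigma$ (totally geodesic, round, $\Ric(\nu,\nu)=0$) to the genuine local product splitting: one must show the CMC slices $\Sigma_t$ remain minimal and round for all small $t$, which requires a careful ODE/maximum-principle analysis of how the mean curvature and the metric evolve along the foliation, exactly as in Bray--Brendle--Neves and Zhu's work; the scalar curvature lower bound has to be fed in at each slice, and one needs the sharp eigenvalue characterization of $\mathbb S^3$ to close the argument. A secondary technical point is verifying that the homological hypersurface $\Sigma^3$ can be chosen so that the induced map to $\mathbb S^3$ genuinely has non-zero degree — this needs the compatibility of the intersection product with $\deg f$ and the choice of the $\mathbb S^1$-factor, which is straightforward but must be stated precisely.
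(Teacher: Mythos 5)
Your proposal follows the paper's proof essentially step-for-step: take an area-minimizing hypersurface $\Sigma^3$ Poincar\'e dual to the $\mathbb S^1$-factor, form the warped product $\Sigma\times\mathbb S^1$ with the first eigenfunction of the Jacobi operator, apply Proposition \ref{Prop-rigid} to conclude $\Sigma\cong\mathbb S^3$ round, totally geodesic, with $\operatorname{Ric}(\nu,\nu)=0$, then use the Bray--Brendle--Neves CMC foliation and a continuation argument to get the local product. The one place where the paper's mechanism differs from what you sketch is the step showing $H_{\Sigma_t}\equiv 0$: rather than an ODE/maximum-principle analysis along the foliation, the paper minimizes a brane functional (a constant-$\mu$ bubble with $0<\delta<H_{t_0}$, using $\Sigma$ and $\Sigma_{t_0}$ as barriers) and re-applies Proposition \ref{Prop-rigid} to the resulting slice's warped product, whose scalar curvature would be $\ge 6+\delta^2>6$ -- a contradiction.
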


\begin{proof}
Since the map $f$ has non-zero degree, according to the geometric measure theory(see Section 5.1.6 in Federer's book \cite{FH1969}), there is a closed orientable area-minimizing hypersurface $\Sigma^3$ of $M^4$ in the homologous class  $[f^{-1}(\mathbb{S}^{3}\times\{p\})]\in H_3(M, Z)$. So $f|_{\Sigma}:\Sigma^3\to \mathbb{S}^3$ has non-zero degree. Then by the stability condition, we obtain $J_{\Sigma}\geq 0$. Let $u>0$ denote the first eigenfunction of $J_{\Sigma}$. Then, %$[f^{-1}(\mathbb{S}^3\times \{p\})]\in  has non-zero degree

\begin{equation}\label{Eqn-Jacobi}
\begin{split}
J_{\Sigma}u&=-\Delta_{\Sigma}u-(\Ric_{g}(\nu,\nu)+|A_{\Sigma}|^2)u\\
&=-\Delta_{\Sigma}u+\frac{1}{2}\left(R_{\Sigma}-R_{g}-|A_{\Sigma}|^{2}\right)u\\
&=\lambda u
	\end{split}
\end{equation}
where $\lambda\geq 0$ is the first eigenvalue. Therefore, if we consider the manifold $(\Sigma\times\mathbb{S}^{1}, g_{u}=g|_{\Sigma}+u^2dt^2)$, which is spin, the scalar curvature
\[R_{g_{u}}=\frac{-2\Delta_{\Sigma}u}{u}+R_{\Sigma}=R_{g}+|A_{\Sigma}|^2+2\lambda\geq 6.\]
Since $( f_{1}=P_{1}\circ f|_{\Sigma},id):\Sigma\times \mathbb{S}^{1}\to \mathbb{S}^{3}\times\mathbb{S}^{1}$ is a non-zero degree map with $f_{1}$ being 1-Lipschitz, where $P_{1}:\mathbb{S}^{3}\times\mathbb{S}^{1}\to \mathbb{S}^{3}$ is the canonical projection, according to Proposition \ref{Prop-rigid}, we deduce that $\Sigma$ is the standard  unit $\mathbb{S}^{3}$ and $u$ is constant. Utilizing the equation (\ref{Eqn-Jacobi}), we obtain $\lambda=0$, $A_{\Sigma}=0$ and $\Ric_{g}(\nu,\nu)=0$.

Now we establish the existence of a local isometry $\tilde{\Phi}$ as stated in Theorem \ref{comprigidity1}.
According to Lemma \ref{Lem-foliation-construct}, there exists a constant mean curvature foliation $\{\Sigma_{t}\}_{-\epsilon\leq t\leq \epsilon}$ in $M$ with $\Sigma_{0}=\Sigma$. Let $H_{t}$ denote the mean curvature of $\Sigma_{t}$.

We claim that $H_{t}\equiv0$ for any $t\in (-\epsilon,\epsilon)$. It suffices to prove this for non-negative $t$, as the argument for the negative side is analogous. For contradiction, assume that the assertion fails. Then there exists a $t_{0}>0$ such that $H_{t_{0}}>0$, since otherwise $H_{t}\leq 0$ and the area-minimizing property of $\Sigma_{t}$ would imply $H_{t}\equiv0$. Let $\Omega_{t_{0}}$ denote the region enclosed by $\Sigma_{t_{0}}$ and $\Sigma$, and define the brane functional

\begin{eqnarray}
% \nonumber % Remove numbering (before each equation)
  \mathcal{B}(\Omega) = \int_{\partial \Omega\backslash \Sigma} d\mathcal{H}^{3}-\delta \int_\Omega d\mathcal{H}^{4},\quad 0<\delta<H_{t_{0}},
\end{eqnarray}
where $\Omega$ is any Borel subset of $\Omega_{t_{0}}$ with finite perimeter and $\Sigma\subset \partial \Omega$. Due to $0<\delta<H_{t_{0}}$, the hypersurfaces $\Sigma$ and $\Sigma_{t_{0}}$ serve as barriers. Therefore we can find a Borel set $\hat{\Omega}_{t_{0}}$ minimizing the brane functional $\mathcal{B}$, for which $\hat{\Sigma}=\partial \hat{\Omega}_{t_{0}}\backslash \Sigma$ is a smooth 2-side hypersurface disjoint from $\Sigma$ and $\Sigma_{t_{0}}$. Denote the induced metric of $\hat{\Sigma}$ from $(M,g)$ by $\hat{g}$. Since $\hat{\Sigma}$ is $\mathcal{B}$-minimizing, it has constant mean curvature $\delta$ and it is $\mathcal{B}$-stable. So the Jacobi operator
\begin{eqnarray}
% \nonumber % Remove numbering (before each equation)
  \hat{\mathcal{J}}=-\Delta_{\hat{g}}-(\operatorname{Ric}_{g}(\hat{\nu},\hat{\nu})+|\hat{A}|^2)=-\Delta_{\hat{g}}-\frac{1}{2}(R_{g}-R_{\hat{g}}+\delta^2+|\hat{A}|^2)
\end{eqnarray}
is nonnegative, where $\hat{\nu}$ is the unit normal vector field of $\hat{\Sigma}$ and $\hat{A}$ is the corresponding second fundamental form. Taking the first eigenfunction $\hat{u}$ of $\hat{\mathcal{J}}$ with respect to the first eigenvalue $\hat{\lambda}_{1}\geq 0$ and defining the metric
\begin{eqnarray}
% \nonumber % Remove numbering (before each equation)
  \bar{g}=\hat{g}+\hat{u}^2dt^2
\end{eqnarray}
on $\hat{\Sigma}\times\mathbb{S}^{1}$, we obtain
\[R_{\bar{g}}= R_{g}+\delta^2+|\hat{A}|^2+2\hat{\lambda}_{1}\geq 6+\delta^2.\]
Since $\hat{\Sigma}$ is homologous to $\Sigma$, the map $\hat{f}=(f_{1}=P_{1}\circ f|_{\hat{\Sigma}},id):\hat{\Sigma}\times\mathbb{S}^{1}\to \mathbb{S}^{3}\times\mathbb{S}^{1}$ has non-zero degree and $f_{1}$ is $1$-Lipschitz. By Proposition \ref{Prop-rigid}, we have $|\hat{A}|=0$ and $\delta=0$, which leads to a contradiction.

Since $H_{t}$ vanishes for any $t\in (-\epsilon,\epsilon)$, there holds $A(\Sigma_{t})=A(\Sigma)$, which yields that $\Sigma_{t}$ is also area-minimizing. Replacing $\Sigma$ by $\Sigma_{t}$ and combining with Proposition \ref{Prop-rigid}, $\Sigma_{t}$ is totally geodesic and has vanished normal Ricci curvature.

In the following, we establish the existence of a local isometry $\tilde{\Phi}:\Sigma\times \mathbb{R}\to M$. Let $\tilde{V}$ denote the normal variation vector field of the foliation $\Sigma_{t}$ and let $\tilde{\Phi}:\Sigma\times (-\epsilon,\epsilon)\to M$ be the flow generated by $\tilde{V}$. It follows that $\tilde{\Phi}$ is an embedding in a small neighborhood of $\Sigma$ and the pull-back of the metric $g$ is
  \[\tilde{\Phi}^{\ast}(g)=\phi^2dt^2+\tilde{\Phi}^{\ast}_{t}(g_{t}),\]
  where $\phi>0$ is the lapse function (see definition in (3.3) of \cite{HSA2010}) and $g_{t}$ is the induced metric on $\Sigma_{t}$ from $g$. Since $\Sigma_{t}$ is totally geodesic, we have $\partial_{t}\tilde{\Phi}^{\ast}_{t}(g_t)=2\phi A_{\Sigma_{t}}=0$, which implies that $\tilde{\Phi}_{t}^{\ast}(g_t)\equiv g|_{\Sigma}$. Additionally, the stability condition yields $\tilde{J}\phi=-\Delta\phi=0$, hence $\phi(\cdot,t)\equiv \text{const}$. Let
  \[r(t)=\int_{0}^{t}\phi(\cdot,s)ds.\]
  Then $\tilde{\Phi}^{\ast}(g)=dr^2+g|_{\Sigma}$. Therefore, $\tilde{\Phi}:\Sigma\times (-\epsilon,\epsilon)\to M$ is a local isometry. Through a continuous argument as Proposition 3.8 in \cite{HSA2010}, we conclude that there exists a local isometry $\tilde{\Phi}:\Sigma\times\mathbb{R}\to M$.
\end{proof}

\begin{remark}
    For the case $5\leq n\leq 7$, as in \cite{Zhu2020}, we need to make multiple $\mathbb{S}^{1}$ symmetry and the process is the  same as that in \cite{Zhu2020}. We only need to check  the $1$-Lipschitz property. However this holds when we restrict to each $\Sigma_{k}$ as construction in Proposition 2.2 in \cite{Zhu2020}.
\end{remark}
%%%%%%%%%%%%%%%%%%%%%%%%%%%%%%%%%%%%%%%%%%%%%%%%%%%%%%%%%%%%
%%%%%%%%%%%%%%%%%%%%%%%%%%%%%%%%%%%%%%%%%%%%%%%%%%%%%%%%%%%%
\subsection{Proof of Theorem \ref{noncompactrigidity1}}
In this subsection, we focus on the noncompact case with codimension 1 and prove Theorem \ref{noncompactrigidity1}. The key step is to find a minimal hypersurface in noncompact $M^{4}$. Then the remaining proof is similar to that of Theorem \ref{comprigidity1}.

\begin{theorem}%\label{noncompactrigidity1}
Let $(M^4, g)$ be a noncompact orientable connected, complete Riemannian manifold with scalar curvature $R_g\geq 6 $ and $f:M^4\to \mathbb{S}^3\times\mathbb{R}$ be a proper, $1$-Lipschitz  map with non-zero degree.	 Then $(M^4, g)$ is isometric to $\mathbb{S}^3\times\mathbb{R}$ provided $(M^4, g)$ has bounded geometry, i.e. $\sup_{M}\|Rm\|<\infty$ and its injective radius $inj(M^4, g)>0$.
\end{theorem}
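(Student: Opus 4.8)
The plan is to adapt the proof of Theorem~\ref{comprigidity1} (the $n=4$ compact case) to the noncompact setting, where the only genuinely new difficulty is producing a compact area-minimizing hypersurface $\Sigma^3$ in $M^4$ that maps with nonzero degree to $\mathbb{S}^3$. First I would use the properness of $f$ to localize: pick a regular value $(p,s)\in\mathbb{S}^3\times\mathbb{R}$ of $f$, and look at the compact submanifold $f^{-1}(\mathbb{S}^3\times\{s\})$, which carries the nonzero degree of $f$ in the sense that its image under $P_1\circ f$ (with $P_1$ the projection to $\mathbb{S}^3$) has nonzero degree. The goal is to minimize $3$-dimensional area in the homology class (or among hypersurfaces separating the two ends appropriately), obtaining a smooth closed orientable area-minimizing $\Sigma^3\subset M^4$ still dominating $\mathbb{S}^3$ with nonzero degree. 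The noncompactness means I cannot just quote Federer; instead I would invoke the standard argument (as in Schoen--Yau and Fischer-Colbrie--Schoen, and precisely the kind of argument used in the proof of the positive mass theorem) that a minimizing sequence of hypersurfaces, each confined between two level sets $f^{-1}(\mathbb{S}^3\times\{s_1\})$ and $f^{-1}(\mathbb{S}^3\times\{s_2\})$ which serve as barriers, has locally bounded area by the monotonicity formula, so it subconverges (in the flat topology and, by regularity in dimension $\le 7$, smoothly) to a closed minimizing hypersurface. Here is exactly where the bounded geometry hypothesis enters: $\sup_M\|Rm\|<\infty$ and $\mathrm{inj}(M^4,g)>0$ give uniform curvature and volume bounds needed for the compactness of the minimizing sequence and for the smooth limit to be a genuine closed hypersurface rather than escaping to infinity or degenerating.

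Once such a $\Sigma^3$ is in hand, the rest of the argument is essentially verbatim from Theorem~\ref{comprigidity1}. Stability of $\Sigma^3$ gives $J_\Sigma\ge0$; taking the first eigenfunction $u>0$ of $J_\Sigma$ and forming the warped product metric $g_u=g|_\Sigma+u^2\,dt^2$ on $\Sigma\times\mathbb{S}^1$, one computes $R_{g_u}=R_g+|A_\Sigma|^2+2\lambda\ge6$, and the map $(P_1\circ f|_\Sigma,\mathrm{id}):\Sigma\times\mathbb{S}^1\to\mathbb{S}^3\times\mathbb{S}^1$ has nonzero degree with $1$-Lipschitz first factor. Proposition~\ref{Prop-rigid} then forces $\Sigma$ to be the standard unit $\mathbb{S}^3$, $u$ constant, hence $\lambda=0$, $A_\Sigma=0$, and $\mathrm{Ric}_g(\nu,\nu)=0$. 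Then Lemma~\ref{Lem-foliation-construct} produces a constant-mean-curvature foliation $\{\Sigma_t\}$ near $\Sigma$; the $\mu$-bubble ($\mathcal{B}$-functional) argument shows each $H_t\equiv0$, so each $\Sigma_t$ is area-minimizing, and reapplying Proposition~\ref{Prop-rigid} makes each totally geodesic with vanishing normal Ricci curvature. The lapse function is then harmonic on each round $\mathbb{S}^3$, hence constant, and reparametrizing the normal flow yields a local isometry $\Sigma\times(-\epsilon,\epsilon)\to M$. Finally, a continuation argument extends this to a global isometry $\mathbb{S}^3\times\mathbb{R}\to M$: since $M$ is complete and connected and the foliation can be extended as long as the slices stay round (which they must, by the same rigidity at every stage), the leaves sweep out all of $M$; properness of $f$ guarantees the $\mathbb{R}$-parameter runs over all of $\mathbb{R}$ and the map is onto.

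The main obstacle is the construction and compactness of the minimizing hypersurface in the noncompact manifold, and in particular making rigorous the claim that a minimizing sequence does not lose area to infinity. Concretely, I would fix two regular values $s_1<s_2$ and restrict attention to hypersurfaces inside the compact region $N:=f^{-1}(\mathbb{S}^3\times[s_1,s_2])$ that are homologous (rel boundary) to $f^{-1}(\mathbb{S}^3\times\{s\})$; because $N$ is compact with the level sets as mean-convex-enough barriers (after perturbing $s_1,s_2$ so that $f^{-1}(\mathbb{S}^3\times\{s_i\})$ are smooth), standard geometric measure theory inside $N$ produces the minimizer, and its nonzero degree over $\mathbb{S}^3$ is inherited from $f$. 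This sidesteps the need to minimize over all of $M$. The bounded geometry hypothesis is then used in the foliation-extension step to ensure the constant-mean-curvature slices $\Sigma_t$ constructed via the implicit function theorem persist for a uniform time interval, so that the continuation argument à la Bray--Brendle--Neves (Proposition~3.8 of \cite{HSA2010}) globalizes the isometry; a secondary subtlety is verifying that the isometric copies of $\mathbb{S}^3\times(-\epsilon,\epsilon)$ patch together consistently and exhaust $M$, which follows from connectedness and completeness together with the fact that the set of $t$ for which $\Sigma_t$ is round and totally geodesic is both open and closed.
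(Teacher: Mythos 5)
Your proposal is on the right track for the second half (once a closed stable minimal hypersurface dominating $\mathbb{S}^3$ is in hand, the argument does reduce to the compact case via Proposition~\ref{Prop-rigid}), but the key step of producing that hypersurface has a genuine gap that the paper's argument is specifically designed to avoid.

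You propose to minimize area inside the compact slab $N:=f^{-1}(\mathbb{S}^3\times[s_1,s_2])$ using the preimage level sets $f^{-1}(\mathbb{S}^3\times\{s_i\})$ as barriers, calling them ``mean-convex-enough.'' But there is no reason for these level sets to be mean convex: $f$ is merely a $1$-Lipschitz proper map, and its level sets can have arbitrary mean curvature (Example~\ref{example1} in the paper, a paraboloid crossed with a sphere, illustrates exactly this kind of freedom). Without a barrier of the correct sign, a minimizing sequence inside $N$ can run into $\partial N$ and the minimizer need not be a smooth closed hypersurface in the interior of $M$, and you cannot conclude stability in $M$ either. The paper sidesteps this entirely by replacing area minimization with a $\mu$-bubble minimization: it builds $\phi$ as in Lemma~\ref{Lem-function-phi}, takes the family $h_\epsilon$ of Lemma~\ref{Lem-function-h} which blows up to $\pm\infty$ at the ends of $(-\frac{1}{4\epsilon},\frac{1}{4\epsilon})$, and minimizes $\mathcal{B}^\epsilon$; the divergence of $h_\epsilon$ serves as an automatic barrier so the minimizer $\Sigma_\epsilon$ is interior, no mean-convexity needed. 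The uniform area bound $\mathcal{H}^3(\Sigma_\epsilon)\le\Lambda$ (which uses $h_\epsilon<0$ on $[\frac18,\frac1{4\epsilon})$), and then a limit as $\epsilon\to0$, produce a stable minimal $\mathcal{S}$. Bounded geometry enters precisely at this limit step (Lemma~\ref{Lem-compact-surface}): by the Meeks--Yau lower volume bound for minimal surfaces, a noncompact $\mathcal{S}$ would have infinite volume, contradicting the uniform bound. So bounded geometry guarantees compactness of the limiting minimal hypersurface, not (as your second paragraph suggests) the persistence of the CMC foliation --- that part is handled exactly as in the compact case and needs no such hypothesis.

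Two smaller points. First, even in your own framework you never justify that the resulting hypersurface is \emph{stable} (i.e.\ $J_\Sigma\ge0$) in $M$ rather than merely in the slab; stability falls out cleanly from the $\mu$-bubble construction in the paper. Second, the paper does not close the global isometry by a foliation-continuation argument: it derives local isometry to $\mathbb{S}^3\times\mathbb{R}$ as in Theorem~\ref{comprigidity1}, observes that $\mathrm{Ric}_g\ge0$, notes $M^4$ has at least two ends (since $f$ is proper of nonzero degree onto $\mathbb{S}^3\times\mathbb{R}$), and applies the Cheeger--Gromoll splitting theorem. Your continuation argument might be made to work, but it is harder to make airtight (one must rule out the foliation closing up or developing singularities), whereas the splitting theorem gives the conclusion immediately.
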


Before the proof, we need some functions constructed by Zhu in \cite{ZJT2020}.

\begin{lemma}[Lemma 2.1 in \cite{ZJT2020}]\label{Lem-function-phi}
  There is a proper and surjective smooth function $\phi:M\to \mathbb{R}$ such that $\phi^{-1}(0)=\Sigma$ and $\operatorname{Lip}\phi<1$.
  Where $\Sigma\subset M$  is an orientable closed hypersurface associated with a signed distance function.
\end{lemma}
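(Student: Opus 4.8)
The plan is to take $\phi$ to be a smoothed and slightly contracted copy of the $\mathbb{R}$-factor of $f$, and to let $\Sigma$ be one of its regular level sets. First I would introduce the real-valued companion $f_2:=\pi_{\mathbb{R}}\circ f\colon M\to\mathbb{R}$, where $\pi_{\mathbb{R}}\colon\mathbb{S}^3\times\mathbb{R}\to\mathbb{R}$ is the projection. Since $\pi_{\mathbb{R}}$ is $1$-Lipschitz for the product metric and $f$ is $1$-Lipschitz, $f_2$ is $1$-Lipschitz. For compact $K\subset\mathbb{R}$ we have $f_2^{-1}(K)=f^{-1}(\mathbb{S}^3\times K)$, which is compact because $f$ is proper, so $f_2$ is proper. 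Moreover $f$ is surjective: at a regular value the preimage is nonempty (otherwise the local degree there, hence $\deg f$, would vanish), regular values are dense, and $\mathrm{im}\,f$ is closed since $f$ is proper; hence $\mathrm{im}\,f=\mathbb{S}^3\times\mathbb{R}$ and therefore $f_2$ is onto $\mathbb{R}$. As $M$ is connected, $\mathrm{im}\,f_2$ is an interval, and being all of $\mathbb{R}$ it is unbounded on both sides.

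Next I would smooth $f_2$ while spoiling the Lipschitz bound only slightly. Fix $\varepsilon\in(0,\tfrac12)$. Using a locally finite atlas and a subordinate partition of unity, mollify $f_2$ chart by chart at a radius $r(x)>0$ chosen pointwise small relative to the local injectivity radius and curvature, and patch the pieces with the partition of unity; this produces $g_1\in C^\infty(M)$ with $\operatorname{Lip} g_1\le 1+\varepsilon$ and $\|g_1-f_2\|_{C^0(M)}\le\varepsilon$. (The Lipschitz bound uses that mollification in a chart alters the Lipschitz constant by at most a chart-distortion/curvature error that is small for small $r$, and that the partition-of-unity patching contributes a gradient error bounded by $\|g_1-f_2\|_{C^0}$ times the fixed derivative bounds of the partition functions, again made small by shrinking $r$.) Because $\|g_1-f_2\|_{C^0}\le\varepsilon<\infty$, properness and surjectivity onto $\mathbb{R}$ pass from $f_2$ to $g_1$: indeed $g_1^{-1}([-R,R])\subset f_2^{-1}([-R-\varepsilon,R+\varepsilon])$ is compact, and $\mathrm{im}\,g_1$ is an interval unbounded on both sides, hence all of $\mathbb{R}$. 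Set $\phi_1:=(1+2\varepsilon)^{-1}g_1$; then $\phi_1$ is smooth, proper, surjective, and $\operatorname{Lip}\phi_1\le(1+\varepsilon)/(1+2\varepsilon)<1$ (and $\to 1$ as $\varepsilon\to0$, should one want the Lipschitz constant close to $1$).

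Finally I would select a regular level set. By Sard's theorem pick a regular value $t_0$ of $\phi_1$ and put $\phi:=\phi_1-t_0$; then $\phi$ is smooth, proper, surjective, with $\operatorname{Lip}\phi=\operatorname{Lip}\phi_1<1$, and $0$ is a regular value of $\phi$. Set $\Sigma:=\phi^{-1}(0)$. It is a smooth embedded hypersurface of $M$, compact because $\phi$ is proper (hence closed, without boundary), and two-sided because $\nabla\phi$ restricts to a nowhere-vanishing normal field along it; as $M$ is orientable, $\Sigma$ is orientable. A compact embedded two-sided hypersurface possesses a tubular neighborhood on which the signed distance to $\Sigma$ is a smooth function with unit gradient; this is the "signed distance function" referred to in the statement. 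If in addition one wishes $\Sigma$ to carry the degree of $f$, one notes that the degrees of $f|_{\Sigma}$ over the components of $\Sigma$ sum to $\deg f\neq 0$, so at least one component has $f|_{\Sigma'}\colon\Sigma'\to\mathbb{S}^3$ of non-zero degree.

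The only genuinely nonroutine point is the smoothing in the second step: one must verify that the passage from the merely Lipschitz $f_2$ to a smooth function on the noncompact manifold $M$ increases the Lipschitz constant by an arbitrarily small amount while keeping the $C^0$-error bounded, so that properness and surjectivity are preserved. This is exactly what the choice of a pointwise-small mollification radius together with the partition-of-unity patching estimate accomplishes; once it is in place, the contraction by $(1+2\varepsilon)^{-1}$ and Sard's theorem complete the argument.
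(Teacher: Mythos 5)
The paper does not prove this lemma; it quotes it from Zhu \cite{ZJT2020}, and the intended construction runs in the opposite direction from yours. There $\Sigma$ is a hypersurface fixed \emph{in advance} (in this paper, one already known to satisfy $(P_1\circ f)_*[\Sigma]=[\mathbb{S}^3]$), and $\phi$ is built from its signed distance function $\rho$: by Hopf--Rinow $\rho$ is automatically proper, it is $1$-Lipschitz, surjective because both components of $M\setminus\Sigma$ are unbounded, and smooth on a tubular neighborhood of $\Sigma$; one leaves it untouched near $\Sigma$, mollifies it in Greene--Wu fashion outside (with $C^0$-error smaller than the tube radius), and multiplies by a constant slightly less than $1$, which preserves $\phi^{-1}(0)=\Sigma$ exactly. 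Your route instead smooths $f_2=\pi_{\mathbb{R}}\circ f$ and defines $\Sigma$ a posteriori as a regular level set. The smoothing step you single out is indeed the crux and is correct as sketched: it is exactly the Greene--Wu approximation of a Lipschitz function, with the pointwise-small mollification radii and the cancellation $\sum_\alpha\nabla\chi_\alpha=0$ controlling the gradient of the patched function; properness, surjectivity and $\operatorname{Lip}\phi<1$ then follow as you say. (Minor point: $f$ is only Lipschitz, so surjectivity should be argued via the local topological degree at an arbitrary point rather than via regular values, but the conclusion is the same.)

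The one place your version needs more than a sentence is the degree of $P_1\circ f$ on \emph{your} $\Sigma=\phi^{-1}(0)$, which is what every invocation of this lemma in Section 3 actually uses. Since $\phi$ is a mollified, rescaled and translated version of $f_2$, the set $\phi^{-1}(0)$ is not a level set of $f$, so the assertion that the degrees over its components sum to $\deg f$ is not immediate. It is true: $(P_1\circ f,\phi)$ is proper and properly homotopic to $f$ through $(P_1\circ f,(1-s)f_2+s\phi)$ (the interpolation stays proper because the coefficient $(1-s)+s(1+2\varepsilon)^{-1}$ is bounded away from $0$ and the translation and $C^0$-errors are bounded), hence has degree $\deg f\neq 0$, and this degree is computed on the slice over the regular value $0$. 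With that supplied, your construction delivers everything the paper needs; just be aware that you have proved the statement with the quantifiers in a different order (``there exist $\Sigma$ and $\phi$'' rather than ``for the given $\Sigma$ there exists $\phi$''), so the degree property of $\Sigma$ must be carried along explicitly instead of being inherited from the choice made before the lemma is applied.
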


\begin{lemma}[Lemma 2.3 in \cite{ZJT2020}]\label{Lem-function-h}
  For any $\epsilon\in (0,1)$, there is a function
  \[h_{\epsilon}:\left(-\frac{1}{4\epsilon},\frac{1}{4\epsilon}\right)\to \R\]
  such that
  \begin{enumerate}[(1)]
    \item $h_{\epsilon}$ satisfies
    \[\frac{4}{3}h^2_{\epsilon}+2h_{\epsilon}^{'}=12\epsilon^2\quad\text{on}\quad \left(-\frac{1}{4\epsilon},-\frac{1}{8}\right]\cup\left[\frac{1}{8},\frac{1}{4\epsilon}\right)\]
    and there is a universal constant $C$ so that
    \[\sup_{-\frac{1}{2n}\leq t\leq \frac{1}{2n}}\left|\frac{4}{3}h_{\epsilon}^2+2h_{\epsilon}^{'}\right|\leq C\epsilon.\]
    \item $h^{'}_{\epsilon}<0$ and
    \[\lim_{t\to\mp \frac{1}{4\epsilon}}h_{\epsilon}(t)=\pm\infty.\]
    \item As $\epsilon\to 0$, $h_{\epsilon}$ converge smoothly to $0$ on any closed interval.
    \item $h_{\epsilon}<0$ on $\left[\frac{1}{8},\frac{1}{4\epsilon}\right)$.
  \end{enumerate}
\end{lemma}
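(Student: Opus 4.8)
The plan is to write $h_\epsilon$ down essentially explicitly: solve the Riccati equation of property (1) on the two outer intervals and glue in a hand‑made monotone interpolation on the central interval. Writing the equation as $h'=6\epsilon^2-\tfrac23 h^2$ and substituting $h=3\epsilon\,w$ turns it into $w'=2\epsilon(1-w^2)$, whose solutions with $w'<0$ are exactly the functions $w=\coth\!\bigl(2\epsilon(t-t_0)\bigr)$; these have a simple pole at $t=t_0$, satisfy $w>1$ to the right of the pole and $w<-1$ to the left, and are strictly decreasing. Accordingly I would define, on $(-\tfrac1{4\epsilon},-\tfrac18]$, $h_\epsilon(t)=3\epsilon\coth\!\bigl(2\epsilon t+\tfrac12\bigr)$ (pole at $t=-\tfrac1{4\epsilon}$), and on $[\tfrac18,\tfrac1{4\epsilon})$, $h_\epsilon(t)=3\epsilon\coth\!\bigl(2\epsilon t-\tfrac12\bigr)$ (pole at $t=\tfrac1{4\epsilon}$). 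A one‑line check using $\coth^2-\operatorname{csch}^2=1$ shows each satisfies $\tfrac43 h_\epsilon^2+2h_\epsilon'=12\epsilon^2$ exactly; since $\coth>1$ on $(0,\infty)$ the first piece stays above $3\epsilon$ and blows up to $+\infty$ at $-\tfrac1{4\epsilon}$, while the second stays below $-3\epsilon$ and blows up to $-\infty$ at $\tfrac1{4\epsilon}$. This gives property (1) on the outer intervals and properties (2) and (4) there for free.

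On the central interval $[-\tfrac18,\tfrac18]$ I would set $h_\epsilon=\eta\,h_L+(1-\eta)\,h_R$, where $h_L(t)=3\epsilon\coth(2\epsilon t+\tfrac12)$ and $h_R(t)=3\epsilon\coth(2\epsilon t-\tfrac12)$ are the two formulas above (both smooth on a neighbourhood of $[-\tfrac18,\tfrac18]$, their poles being at $\pm\tfrac1{4\epsilon}$), and $\eta$ is a fixed, $\epsilon$‑independent smooth cutoff with $\eta\equiv 1$ near $-\tfrac18$, $\eta\equiv 0$ near $\tfrac18$, and $\eta'\le 0$. Then $h_\epsilon$ agrees with $h_L$ near $-\tfrac18$ and with $h_R$ near $\tfrac18$, so the glued function is $C^\infty$ across both junctions. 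Monotonicity follows from $h_\epsilon'=\eta'(h_L-h_R)+\eta\,h_L'+(1-\eta)\,h_R'$ together with $h_L>0>h_R$, $h_L'<0$, $h_R'<0$ and $\eta'\le 0$ on this interval: every term is $\le 0$, and at each point at least one is $<0$, so $h_\epsilon'<0$. For the defect bound, on $[-\tfrac18,\tfrac18]$ one has $h_L,h_R=O(\epsilon)$, $h_L',h_R'=O(\epsilon^2)$ and $\eta'=O(1)$ with absolute constants (the cutoff is fixed and the arguments $2\epsilon t\pm\tfrac12$ range over a compact set bounded away from the pole of $\coth$ as $\epsilon\in(0,1)$), hence $h_\epsilon=O(\epsilon)$, $h_\epsilon'=O(\epsilon)$, and $\bigl|\tfrac43 h_\epsilon^2+2h_\epsilon'\bigr|=O(\epsilon)$ with a universal constant; this is the central estimate in (1) (and a fortiori it holds on any slightly larger window $[-\tfrac1{2n},\tfrac1{2n}]$, since on the overlap with the outer intervals the exact equation gives defect $12\epsilon^2\le 12\epsilon$). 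Property (3) is then immediate: on any fixed closed interval, once $\epsilon$ is small enough that the interval lies in the domain, $h_\epsilon$ is $3\epsilon$ times a $C^\infty$ function whose $C^k$ norms are bounded independently of $\epsilon$ (all arguments $2\epsilon t\pm\tfrac12$ stay in a fixed compact set away from the pole), so $h_\epsilon\to 0$ smoothly.

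I do not expect any single step to be a serious obstacle, but the delicate point is the incompatibility between properties (1) and (2): wherever $|h|<3\epsilon$ the exact equation forces $h'=6\epsilon^2-\tfrac23 h^2>0$, so a strictly decreasing function running from $+\infty$ to $-\infty$ cannot satisfy the equation near its zero. This is precisely why the equation is imposed only on the two outer intervals and replaced by the $O(\epsilon)$ bound in the middle, and why the interpolation must be built to be simultaneously $C^\infty$ across the two junctions, strictly decreasing, and $O(\epsilon)$‑close to the equation. Everything else --- the exact solution of the Riccati equation, the blow‑up rates, and the elementary bounds --- is routine.
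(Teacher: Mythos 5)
The paper does not prove this lemma at all; it is imported verbatim from Zhu \cite{ZJT2020}, so there is nothing in the present text to compare against. Your construction is correct and self-contained, and it is the standard way such auxiliary Riccati profiles are built: the substitution $h=3\epsilon w$ reduces $\tfrac{4}{3}h^2+2h'=12\epsilon^2$ to $w'=2\epsilon(1-w^2)$, whose decreasing branches are exactly $\coth\bigl(2\epsilon(t-t_0)\bigr)$; choosing the poles at $\mp\tfrac{1}{4\epsilon}$ gives the exact equation, the sign, and the blow-up on the two outer intervals, and the fixed-cutoff convex combination in the middle is smooth across the junctions (since $\eta\equiv 1$, resp.\ $\eta\equiv 0$, near $\mp\tfrac18$) and strictly decreasing because every term of $h_\epsilon'=\eta'(h_L-h_R)+\eta h_L'+(1-\eta)h_R'$ is $\le 0$ with the convex combination $\eta h_L'+(1-\eta)h_R'<0$. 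The defect estimate follows as you say from $h_L,h_R=O(\epsilon)$, $h_L',h_R'=O(\epsilon^2)$, $\eta'=O(1)$ with $\epsilon$-independent constants (the arguments $2\epsilon t\pm\tfrac12$ stay in a compact set away from $0$ on $[-\tfrac18,\tfrac18]$), and property (3) follows since $h_\epsilon/(3\epsilon)$ has $\epsilon$-uniform $C^k$ bounds on any fixed compact interval. Your closing remark about the incompatibility of the exact ODE with strict monotonicity near the zero of $h_\epsilon$, and why the lemma therefore only imposes the $O(\epsilon)$ bound on the central window, correctly identifies the only structural subtlety in the statement.
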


For the function $\phi$ constructed in Lemma \ref{Lem-function-phi}, denote
\[\Omega_{0}=\{x\in M|-\frac1{\epsilon}<\phi(x)<0\}.\]
Given any smooth function $h:(-T,T)\to\R$, define the functional, i.e. $\mu$-bubble, as
\[\mathcal{B}^{h}(\Omega)=\mathcal{H}^{3}(\partial^{\ast} \Omega)-\int_{M}(\chi_{\Omega}-\chi_{\Omega_{0}})h\circ \phi d\mathcal{H}^{4},\]
on
\[\mathcal{C}_{T}=\{\mbox{Caccipoli set }\Omega\subset M|\Omega \triangle \Omega_{0}\Subset \phi^{-1}((-T,T))\}.\]
For the minimization problem of the functional $\mathcal{B}^{h}$ on $\mathcal{C}_{T}$, we have the following existence result.

\begin{lemma}[Lemma 2.2 in \cite{ZJT2020}]\label{Lem-existence}
  Assume that $\pm T$ are regular values of $\phi$ and also the function $h$ satisfies
  \begin{align}\label{Eqn-barrier-condition}
    \lim_{t\to-T}h(t)=+\infty\quad\mbox{and} \quad \lim_{t\to T}h(t)=-\infty,
  \end{align}
  then there exists a smooth minimizer $\widehat{\Omega}$ in $\mathcal{C}_{T}$ for $\mathcal{B}^{h}$.
\end{lemma}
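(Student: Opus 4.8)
The plan is the direct method in the calculus of variations: take a minimizing sequence for $\mathcal B^{h}$ in $\mathcal C_{T}$, extract a $BV$-convergent subsequence, and check that the limit is an interior smooth minimizer; hypothesis (\ref{Eqn-barrier-condition}) is precisely what restores the compactness that non-compactness of $M$ would otherwise destroy. First I would reduce to a compact picture. Since $\phi$ is proper (Lemma \ref{Lem-function-phi}) and $\pm T$ are regular values, $N:=\phi^{-1}([-T,T])$ is a smooth compact manifold with boundary $\partial_-N=\phi^{-1}(-T)$, $\partial_+N=\phi^{-1}(T)$, and $\phi^{-1}((-T,T))$ is precompact. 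Every $\Omega\in\mathcal C_{T}$ agrees with $\Omega_{0}$ off $\phi^{-1}((-T,T))$, so $\mathcal B^{h}(\Omega)$ coincides, up to the fixed finite constant $\mathcal H^{3}\!\big(\partial\Omega_{0}\setminus\phi^{-1}((-T,T))\big)$, with the $\mu$-bubble functional (\ref{eqn-functional-times-sphere}) on $N$ with density $\mu=h\circ\phi$ and reference set $\Omega_{0}\cap N$ (which contains a collar of $\partial_-N$ and misses $\partial_+N$); on $\overset{\circ}{N}$ the density $\mu$ is smooth with $\mu\to+\infty$ at $\partial_-N$ and $\mu\to-\infty$ at $\partial_+N$ by (\ref{Eqn-barrier-condition}).

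The crux is an a priori confinement estimate: there exist $T_{1},T_{2}\in(0,T)$ such that every $\Omega$ with $\mathcal B^{h}(\Omega)\le\mathcal B^{h}(\Omega_{0})$ satisfies $\phi^{-1}((-T,-T_{1}])\subset\Omega$ and $\Omega\cap\phi^{-1}([T_{2},T))=\emptyset$. At the $\phi=T$ end this is a cut-and-paste comparison: replacing $\Omega$ by $\Omega\cap\{\phi<s\}$, $s\in(0,T)$, raises the perimeter by at most $\mathcal H^{3}(\{\phi=s\})$ while lowering the bulk term by $-\int_{\Omega\cap\{\phi>s\}}h\circ\phi\ge\big(\inf_{\{\phi>s\}}|h|\big)\,\mathcal H^{4}(\Omega\cap\{\phi>s\})$; by the coarea formula $\int_{s}^{T}\mathcal H^{3}(\{\phi=\tau\})\,d\tau\le\mathcal H^{4}(\phi^{-1}((s,T)))<\infty$, so one can take $s$ arbitrarily close to $T$ with $\mathcal H^{3}(\{\phi=s\})$ controlled, whereas $\inf_{\{\phi>s\}}|h|\to\infty$ by (\ref{Eqn-barrier-condition}); hence, were $\Omega$ to intrude into $\{\phi\ge T_{2}\}$ for $T_{2}$ close enough to $T$, the modification would strictly decrease $\mathcal B^{h}$ — impossible for a (near-)minimizer. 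The $\phi=-T$ end is symmetric, penalising the removal of mass from $\Omega_{0}\cap\{\phi<-T_{1}\}$, where $h\to+\infty$. In particular $\inf_{\mathcal C_{T}}\mathcal B^{h}>-\infty$.

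Granting confinement the rest is routine. A minimizing sequence $\Omega_{i}$ has reduced boundaries trapped in the compact set $\phi^{-1}([-T_{1},T_{2}])$, where $\mu$ is bounded, so from the bound on $\mathcal B^{h}(\Omega_{i})$ it has uniformly bounded perimeter; by $BV$ compactness a subsequence converges in $L^{1}$ to some $\widehat\Omega$ with $\widehat\Omega\triangle\Omega_{0}\subset\phi^{-1}([-T_{1},T_{2}])\Subset\phi^{-1}((-T,T))$, i.e. $\widehat\Omega\in\mathcal C_{T}$. Lower semicontinuity of perimeter together with continuity of the bulk term (its integrand is supported where $\mu$ is bounded) gives $\mathcal B^{h}(\widehat\Omega)\le\liminf\mathcal B^{h}(\Omega_{i})=\inf_{\mathcal C_{T}}\mathcal B^{h}$, so $\widehat\Omega$ minimizes. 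Finally, near $\widehat\Omega$ the functional is perimeter minus a bounded smooth bulk density, so $\partial^{*}\widehat\Omega$ is an $(\Lambda,r_{0})$-minimizing boundary; by the Federer–De Giorgi regularity theory $\partial\widehat\Omega$ is smooth away from a closed set of Hausdorff dimension $\le n-8$, which is empty since $n=\dim M\le 7$. (After the first paragraph one may instead simply quote the ``Existence of $\mu$-bubble'' lemma above for this $N$ and $\mu$.) The main obstacle is the confinement step of the second paragraph: it is the only place (\ref{Eqn-barrier-condition}) is used, and without it $\inf_{\mathcal C_{T}}\mathcal B^{h}$ might be $-\infty$ or fail to be attained inside $\mathcal C_{T}$.
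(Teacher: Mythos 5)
The paper cites this lemma directly from Zhu's \cite{ZJT2020} without giving its own proof, so there is no in-paper argument to compare against; I will assess your sketch on its own terms. Your overall plan — compactify to $N=\phi^{-1}([-T,T])$, confine competitors, apply $BV$ compactness, and invoke codimension-$8$ regularity — is the right skeleton, and your parenthetical remark at the end is the cleanest and correct route: after the first paragraph one should simply apply the paper's ``Existence of $\mu$-bubble'' lemma (Zhu's Proposition 2.1 in \cite{Zhu2021}) to $N$ with $\mu=h\circ\phi$; condition (\ref{Eqn-barrier-condition}) is precisely the hypothesis $\mu\in C^{\infty}(\overset{\circ}{N})$, $\mu\to\pm\infty$ on $\partial_{\mp}N$.

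The genuine gap is in the confinement paragraph. The truncation $\Omega\mapsto\Omega\cap\{\phi<s\}$ changes the bulk term by $\int_{\Omega\cap\{\phi\ge s\}}h\circ\phi$, a quantity whose magnitude is proportional to the intruding volume $\mathcal{H}^4(\Omega\cap\{\phi\ge s\})$. Your estimate therefore shows that for a (near-)minimizer this volume is bounded by $\mathcal{H}^{3}(\{\phi=s\})/\inf_{\{\phi>s\}}|h|$, i.e.\ it decays as $s\to T$; it does \emph{not} show it vanishes for some fixed $T_2<T$. If the intruding volume is already small, the bulk decrease may be smaller than the possible perimeter increase $\mathcal{H}^{3}(\{\phi=s\})$, so the truncation need not strictly decrease $\mathcal{B}^h$. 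Hence the claim ``$\Omega\cap\phi^{-1}([T_2,T))=\emptyset$ for every near-minimizer'' does not follow from the cut-and-paste comparison alone. This also leaves the ``continuity of the bulk term'' in the third paragraph unjustified, since the support of $\chi_{\Omega_i}-\chi_{\Omega_0}$ is not a priori uniformly bounded away from $\partial N$, where $h\circ\phi$ is unbounded. The missing ingredient is the maximum-principle/barrier step used in \cite{ZJT2020} and \cite{Zhu2021}: one first extracts a $BV$ limit $\widehat\Omega$ inside the compact manifold $N$ (where the reduced boundaries may a priori accumulate on $\partial N$), and then shows $\partial\widehat\Omega$ is bounded away from $\partial_{\pm}N$ by comparing its prescribed mean curvature $H=h\circ\phi$, which blows up to $\mp\infty$ near $\partial_{\pm}N$, with the uniformly bounded mean curvatures of the level sets $\{\phi=s\}$ for $s$ in a compact subinterval of $(-T,T)$. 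With this step supplied, the rest of your argument (lower semicontinuity, De Giorgi–Federer regularity for $n\le 7$) goes through.
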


Because $f$ is a non-zero degree map, there exists an orientable closed hypersurface $\Sigma$ satisfying $(P_{1}\circ f)_{\ast}[\Sigma]=[\mathbb{S}^{3}]$, where $[\Sigma]$ is not zero element in homology group and $P_{1}\circ f|_{\Sigma}:\Sigma\to \mathbb{S}^{3}$ is a non-zero degree map.
For this hypersurface $\Sigma$, we can use invoke \ref{Lem-function-phi} to construct a function $\phi$ such that $\Sigma=\phi^{-1}(0)$.

Now considering the function $h_{\epsilon}$ constructed in Lemma \ref{Lem-function-h}, we define $\mu_{\epsilon}=h_{\epsilon}\circ \phi$. Let functional 
\[\mathcal{B}^{\epsilon}(\Omega)=\mathcal{H}^{3}(\partial^{\ast} \Omega)-\int_{M}(\chi_{\Omega}-\chi_{\Omega_{0}})\mu_{\epsilon} d\mathcal{H}^{4},\]
be defined on
\[\mathcal{C}_{\epsilon}=\{\mbox{Caccipoli set }\Omega\subset M|\Omega \triangle \Omega_{0}\Subset \phi^{-1}\left(\left(-\frac{1}{4\epsilon},\frac{1}{4\epsilon}\right)\right)\}.\]
By combining Lemma \ref{Lem-existence} with the Sard's theorem, we conclude (as stated in Proposition 2.4 in \cite{ZJT2020}) that for almost every $\epsilon\in (0,1)$, there exists a smooth minimizer $\widehat{\Omega}_{\epsilon}$ in $\mathcal{C}_{\epsilon}$ for the functional $\mathcal{B}^{\epsilon}$.

Let $\Sigma_{\epsilon}=\partial \widehat{\Omega}_{\epsilon}$. Our objective is to prove the convergence of $\Sigma_{\epsilon}$ to a closed minimal hypersurface. The following lemma asserts that the volume of $\Sigma_{\epsilon}$ is uniformly bounded.

\begin{lemma}
   Let $\epsilon\in (0,1)$, then we have $\mathcal{H}^{3}(\Sigma_{\epsilon})\leq \Lambda<\infty$ for some constant $\Lambda$  which is independent on $\epsilon$.
\end{lemma}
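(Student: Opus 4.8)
The plan is to bound $\mathcal{H}^3(\Sigma_\epsilon)$ by comparing the minimizer $\widehat{\Omega}_\epsilon$ with the fixed reference set $\Omega_0$. Since $\widehat{\Omega}_\epsilon$ minimizes $\mathcal{B}^\epsilon$ over $\mathcal{C}_\epsilon$ and $\Omega_0\in\mathcal{C}_\epsilon$, we have $\mathcal{B}^\epsilon(\widehat{\Omega}_\epsilon)\leq\mathcal{B}^\epsilon(\Omega_0)=\mathcal{H}^3(\partial^\ast\Omega_0)$, because the bulk term vanishes when $\Omega=\Omega_0$. Hence
\[
\mathcal{H}^3(\Sigma_\epsilon)=\mathcal{H}^3(\partial^\ast\widehat{\Omega}_\epsilon)\leq \mathcal{H}^3(\partial^\ast\Omega_0)+\int_M(\chi_{\widehat{\Omega}_\epsilon}-\chi_{\Omega_0})\mu_\epsilon\, d\mathcal{H}^4.
\]
So the task reduces to bounding the bulk integral on the right independently of $\epsilon$.

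The key difficulty, and the main obstacle, is that $\mu_\epsilon=h_\epsilon\circ\phi$ blows up to $\pm\infty$ near $\phi=\mp\frac{1}{4\epsilon}$, so one cannot simply use an $L^\infty$ bound on $\mu_\epsilon$. The resolution is to exploit the sign structure: by Lemma \ref{Lem-function-h}(2), $h_\epsilon'<0$, so $h_\epsilon$ is decreasing, $h_\epsilon\to+\infty$ as $t\to-\frac{1}{4\epsilon}$ and $h_\epsilon\to-\infty$ as $t\to\frac{1}{4\epsilon}$. The symmetric difference $\widehat{\Omega}_\epsilon\triangle\Omega_0$ is compactly contained in $\phi^{-1}((-\frac1{4\epsilon},\frac1{4\epsilon}))$, and on the region where $\widehat{\Omega}_\epsilon$ contains points not in $\Omega_0$ (i.e. $\chi_{\widehat{\Omega}_\epsilon}-\chi_{\Omega_0}=+1$), one expects $\phi$ to be positive (the minimizer can only grow ``outward'' toward larger $\phi$, where $\mu_\epsilon=h_\epsilon\circ\phi<0$), and on the region where $\chi_{\widehat{\Omega}_\epsilon}-\chi_{\Omega_0}=-1$, one expects $\phi<-\frac1\epsilon$, where $h_\epsilon\circ\phi>0$. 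In either case the product $(\chi_{\widehat{\Omega}_\epsilon}-\chi_{\Omega_0})\mu_\epsilon\leq 0$ where $\mu_\epsilon$ is large in absolute value, so the bulk term is bounded above by the contribution from the region where $|\mu_\epsilon|$ is controlled. More precisely, I would split $M$ into the ``core'' $\phi^{-1}([-\tfrac18,\tfrac18])$ (or $[-\tfrac{1}{2n},\tfrac{1}{2n}]$), where $|h_\epsilon|\leq C$ by part (1) together with part (3), and the two ``tails''; on the tails the integrand has a favorable sign by the barrier argument already implicit in Lemma \ref{Lem-existence}, so it contributes nonpositively. This gives
\[
\int_M(\chi_{\widehat{\Omega}_\epsilon}-\chi_{\Omega_0})\mu_\epsilon\,d\mathcal{H}^4\leq C\cdot\mathcal{H}^4\big(\phi^{-1}([-\tfrac18,\tfrac18])\big)=:C',
\]
with $C'$ finite and independent of $\epsilon$, since $\phi^{-1}([-\tfrac18,\tfrac18])$ is a fixed compact set.

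To make the sign argument on the tails rigorous, I would invoke the maximum principle / barrier property: since $h_\epsilon$ satisfies $\tfrac43 h_\epsilon^2+2h_\epsilon'=12\epsilon^2\geq 0$ on the tails and $\operatorname{Lip}\phi<1$, the slices $\phi^{-1}(\pm t)$ act as barriers forcing $\widehat{\Omega}_\epsilon$ to stay trapped, exactly as in the proof of Lemma \ref{Lem-existence} and Proposition 2.4 in \cite{ZJT2020}; consequently $\widehat{\Omega}_\epsilon\triangle\Omega_0$ is confined to a region where the appropriate sign holds. Combining the comparison inequality with this bulk estimate yields
\[
\mathcal{H}^3(\Sigma_\epsilon)\leq \mathcal{H}^3(\partial^\ast\Omega_0)+C'=:\Lambda<\infty,
\]
where $\Lambda$ depends only on $\phi$, $\Omega_0$, and the universal constant from Lemma \ref{Lem-function-h}, but not on $\epsilon$. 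I expect the only genuinely delicate point to be the justification that the symmetric difference cannot escape into the far tails where $\mu_\epsilon$ is large with the ``wrong'' sign, but this is precisely the trapping phenomenon that the barrier condition \eqref{Eqn-barrier-condition} is designed to produce, so it follows from the cited results of Zhu.
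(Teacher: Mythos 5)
Your proposal takes essentially the same route as the paper: compare the minimizer against $\Omega_0$ to reduce to bounding the bulk integral, split that integral into a fixed compact core $\phi^{-1}([-\tfrac18,\tfrac18])$ (where $h_\epsilon$ is uniformly bounded by Lemma \ref{Lem-function-h}(1),(3)) and the two tails, and discard the tails by a sign observation. One clarification: the tail step you flag as ``the only genuinely delicate point'' requires no barrier or trapping argument at all, because the sign is automatic from the definition of $\Omega_{0}$. On $\{\phi>\tfrac18\}$ one has $\chi_{\Omega_{0}}=0$, hence $\chi_{\widehat{\Omega}_\epsilon}-\chi_{\Omega_{0}}\geq 0$, while $\mu_\epsilon<0$ there by Lemma \ref{Lem-function-h}(4); on $\{-\tfrac{1}{4\epsilon}<\phi<-\tfrac18\}$ one has $\chi_{\Omega_{0}}=1$, hence $\chi_{\widehat{\Omega}_\epsilon}-\chi_{\Omega_{0}}\leq 0$, while $h_\epsilon>0$ there (it decreases from $+\infty$ and stays above $3\epsilon$ by the ODE). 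In particular your statement that $\chi_{\widehat{\Omega}_\epsilon}-\chi_{\Omega_{0}}=-1$ ``where $\phi<-\tfrac1\epsilon$, where $h_\epsilon\circ\phi>0$'' is a slip --- that set lies inside $\Omega_{0}$, so $-\tfrac1\epsilon<\phi<0$, and $h_\epsilon$ is not even defined for $\phi\leq -\tfrac{1}{4\epsilon}$ --- but the sign conclusion you need is correct for the simpler reason above, so the argument goes through and matches the paper's.
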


\begin{proof}
Because $\widehat{\Omega}_{\epsilon}$ is a minimizer,
  \begin{equation} %\label{}
  \begin{split}
  	\mathcal{B}^{\epsilon}(\widehat{\Omega}_{\epsilon}) & =\mathcal{H}^{3}(\partial \widehat{\Omega}_{\epsilon})-\int_{\Omega_{0}}(\chi_{\widehat{\Omega}_{\epsilon}}-\chi_{\Omega_{0}})\mu_{\epsilon}d\mathcal{H}^{4}-\int_{M\setminus\Omega_{0}}(\chi_{\widehat{\Omega}_{\epsilon}}-\chi_{\Omega_{0}})\mu_{\epsilon}d\mathcal{H}^{4}\\
    &=\mathcal{H}^{3}(\partial \widehat{\Omega}_\epsilon)-\int_{\{0\leq\phi\leq \frac{1}{8}\}}\chi_{\widehat{\Omega}_{\epsilon}}\mu_{\epsilon}d\mathcal{H}^{4}-\int_{\{\phi>\frac{1}{8}\}}\chi_{\widehat{\Omega}_{\epsilon}}\mu_{\epsilon}d\mathcal{H}^{4}\\
    &\leq \mathcal{H}^{3}(\Sigma).
    \end{split}
    \end{equation}
  Let
  \[\Lambda=\left|\int_{\{0\leq\phi\leq \frac{1}{8}\}}\chi_{\Omega_{\epsilon}}\mu_{\epsilon}d\mathcal{H}^{4}\right|+\mathcal{H}^{3}(\Sigma).\]
  Since $\mu_{\epsilon}<0$ on $\left[\frac{1}{8},\frac{1}{4\epsilon}\right)$, we have
  \[\mathcal{H}^{3}(\Sigma_{\epsilon})=\mathcal{H}^{3}(\partial \widehat{\Omega}_{\epsilon})\leq \Lambda.\]
\end{proof}

By geometric measure theory (see Lemma 4.10 in \cite{HLS2023}), for any fixed compact set $K\subset M$, it follows that $\Sigma_{\epsilon}\cap K\to \mathcal{S}\cap K$ as $\epsilon\to 0$. Recalling property $(3)$ of $h_{\epsilon}$, we deduce that $\mathcal{S}$ is a stable minimal surface in the compact set $K$. Subsequently, we establish the compactness of $\mathcal{S}$.

\begin{lemma}\label{Lem-compact-surface}
  $\mathcal{S}\subset M$ is compact.
\end{lemma}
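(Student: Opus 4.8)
The plan is to combine the uniform area bound $\mathcal{H}^3(\Sigma_\epsilon)\le\Lambda$ of the previous lemma with the monotonicity formula, the bounded geometry hypothesis entering in an essential way.

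First I would record that, since $h_\epsilon\to0$ smoothly on compact sets (property $(3)$ of Lemma~\ref{Lem-function-h}) and $\Sigma_\epsilon\cap K\to\mathcal{S}\cap K$ for every compact $K\subset M$, the limit $\mathcal{S}$ is the support of a stationary $3$-varifold $V$ in $M^4$ (a stable minimal hypersurface on each compact set, as already noted). Exhausting $M$ by precompact open sets and using lower semicontinuity of mass under this (local) convergence, together with $\|V_{\Sigma_\epsilon}\|(M)=\mathcal{H}^3(\Sigma_\epsilon)\le\Lambda$, gives the global bound $\|V\|(M)\le\Lambda<\infty$. Since $\mathcal{S}=\operatorname{spt}\|V\|$ is closed in $M$ and $(M^4,g)$ is complete, it then suffices to prove that $\mathcal{S}$ is \emph{bounded}.

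The crux is a uniform density lower bound. Using the bounded geometry hypothesis I would fix $r_0>0$ and $c_0>0$, depending only on $\sup_M\|Rm\|$ and $\operatorname{inj}(M^4,g)$, such that the monotonicity formula for stationary varifolds on geodesic balls (with ambient curvature correction) yields
\[
\|V\|\big(B_{r_0}(p)\big)\ \ge\ c_0\qquad\text{for every }p\in\mathcal{S}.
\]
If $\mathcal{S}$ were unbounded, I would choose $p_1,p_2,\dots\in\mathcal{S}$ with $d_g(p_i,p_j)\ge2r_0$ for $i\neq j$ inductively: once $p_1,\dots,p_k$ are chosen, $\bigcup_{i\le k}B_{2r_0}(p_i)$ is bounded, so some $p_{k+1}\in\mathcal{S}$ lies outside it. The balls $B_{r_0}(p_j)$ are then pairwise disjoint, so
\[
\Lambda\ \ge\ \|V\|(M)\ \ge\ \sum_{j\ge1}\|V\|\big(B_{r_0}(p_j)\big)\ \ge\ \sum_{j\ge1}c_0\ =\ +\infty,
\]
a contradiction. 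Hence $\mathcal{S}$ is bounded, and being closed in the complete manifold $M$, it is compact.

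The only genuinely delicate point is the uniformity of $r_0$ and $c_0$ over all $p\in\mathcal{S}$: the monotonicity radius must stay below the injectivity radius and be controlled by the ambient curvature near $p$, and the monotone quantity's constant degenerates as these quantities do; it is precisely the hypotheses $\sup_M\|Rm\|<\infty$ and $\operatorname{inj}(M^4,g)>0$ that make the estimate uniform on $M$. (This is also why bounded geometry is invoked in Theorem~\ref{noncompactrigidity1} — without it, $\mathcal{S}$ could a priori escape to infinity.)
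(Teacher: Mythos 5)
Your proposal is correct and follows essentially the same route as the paper's proof: both derive from bounded geometry a uniform lower bound on the mass of the minimal hypersurface in small geodesic balls (the paper cites Lemma~1 of Meeks--Yau, you invoke the monotonicity formula directly --- the same underlying estimate), and both combine this with the uniform bound $\mathcal{H}^{3}(\Sigma_{\epsilon})\leq\Lambda$ and a disjoint-ball covering argument to force $\mathcal{S}$ to be bounded. The only cosmetic difference is that the paper applies the density bound to $\Sigma_{\epsilon}$ for small $\epsilon$ and contradicts the bound on $\mathcal{H}^{3}(\Sigma_{\epsilon})$, whereas you pass to the limit varifold $V$ and contradict $\|V\|(M)\leq\Lambda$; these are equivalent.
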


\begin{proof}
Since $\mathcal{S}$ is minimal and $M$ has bounded geometry, by Lemma 1 in \cite{MY1980}, for any $r_0<inj(M^n,g)$ and any $p\in \mathcal{S}$, we have  $\mathcal{H}^{3}(B_{r_{0}}(p)\cap \mathcal{S})\geq C_{r_0}>0$ for some constant $C_{r_0}$ depending only on $r_{0}$, $inj(M^4,g)$ and $\sup_M \|Rm||$, where $B_{r_{0}}(p)$ denotes the geodesic ball in $(M^4,g)$ with radius $r_0$ and center $p$. Then for $\epsilon$ sufficiency small, $\mathcal{H}^{3}(B_{r_{0}}(p)\cap\Sigma_{\epsilon})\geq \frac12 C_{r_0}>0$. Therefore, by a suitable covering argument, we know that if $\mathcal{S}$ is noncompact, then $\mathcal{H}^{3}(\mathcal{S})=\infty$, which implies  $\mathcal{H}^{3}(\Sigma_{\epsilon})\to\infty$ as $\epsilon\to 0$. While $\mathcal{H}^{3}(\Sigma_{\epsilon})\leq \Lambda<\infty$, we get a contradiction.
\end{proof}

\begin{proof}[Proof of Theorem \ref{noncompactrigidity1}] By Lemma \ref{Lem-compact-surface}, there exists a closed oriented hypersurface $\mathcal{S}$ which is homologous to $\Sigma$. Consequently, the map $P_{1}\circ f|_{\mathcal{S}}:\mathcal{S}\to \mathbb{S}^{3}$ has non-zero degree and is also $1$-Lipschitz. Therefore, the map $(P_{1}\circ f|_{\mathcal{S}},id):\mathcal{S}\times\mathbb{S}^{1}\to \mathbb{S}^{3}\times\mathbb{S}^{1}$ also has non-zero degree. The remaining proof is the same as that of Theorem \ref{comprigidity1}, concluding that $(M^4,g )$ is locally isometric to $\mathbb{S}^3\times \mathbb{R}$. Thus $(M^{4},g)$ has nonnegative Ricci curvature. Utilizing the splitting Theorem and noting that  $M^{4}$ has at least two ends, we deduce that $(M^{4},g)$ is isometric to $\mathbb{S}^{3}\times \mathbb{R}$.
\end{proof}
%%%%%%%%%%%%%%%%%%%%%%%%%%%%%%%%%%%%%%%%%%%%%%%
\subsection{Proof of Theorem \ref{noncompactrigidity2}}
In this subsection, we focus on the nonexistence and prove Theorem \ref{noncompactrigidity2}. The bounded geometry assumption becomes dispensable in the absence of a rigidity conclusion. A crucial step  is to construct the function $\mu$.
\begin{theorem}%\label{noncompactrigidity2}
Let $(M^4, g)$ be a  noncompact orientable connected complete Riemannian manifold with scalar curvature $R_g> 6$. Then there is no proper and $1$-Lipschitz  map  $f:M^4\to \mathbb{S}^3\times\mathbb{R}$ with non-zero degree.
\end{theorem}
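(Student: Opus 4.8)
The plan is to argue by contradiction, following the strategy employed in Theorem~3.5 of Cecchini--R\"ade--Zeidler \cite{Ce_2023}. Suppose such a proper, $1$-Lipschitz map $f:M^4\to \mathbb{S}^3\times\mathbb{R}$ of non-zero degree exists. As in the proof of Theorem~\ref{noncompactrigidity1}, since $f$ has non-zero degree there is an orientable closed hypersurface $\Sigma\subset M^4$ with $(P_1\circ f)_\ast[\Sigma]=[\mathbb{S}^3]$, and by Lemma~\ref{Lem-function-phi} a proper surjective function $\phi:M\to\mathbb{R}$ with $\operatorname{Lip}\phi<1$ and $\phi^{-1}(0)=\Sigma$. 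The strict inequality $R_g>6$ does not immediately give a uniform lower bound, so the first step is to pass to the band $N=\phi^{-1}([-T,T])$ for a large regular value $T$; on this compact band $R_g\geq 6+2\delta_0$ for some $\delta_0>0$. The key new ingredient, compared with the function $h_\epsilon$ used before, is to choose the warping/weight function $\mu$ on $(-T,T)$ solving an ODE of the shape $\tfrac{4}{3}\mu^2+2\mu'\le 2\delta_0$ with $\mu\to+\infty$ at $-T$ and $\mu\to-\infty$ at $T$; such a $\mu$ exists once $T$ is large enough relative to $\delta_0$ (this is precisely the band-width argument: a band that is too wide cannot support such a $\mu$, but here we get to pick $T$).

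Next I would run the $\mu$-bubble machinery. By Lemma~2.2 of \cite{ZJT2020} (or the existence lemma in Section~\ref{Sec-minimal-surface}), with $\mu_\cdot=\mu\circ\phi$ there is a smooth minimizer $\widehat\Omega$ of the functional $\mathcal{B}^{\mu}$ in the appropriate class, with $\Sigma^3:=\partial\widehat\Omega$ a closed hypersurface homologous to $\Sigma$; hence $P_1\circ f|_{\Sigma^3}:\Sigma^3\to\mathbb{S}^3$ still has non-zero degree and is $1$-Lipschitz. Stability of the $\mu$-bubble gives that the Jacobi operator $\mathcal{J}=-\Delta_{\Sigma^3}+\tfrac12(R_{\Sigma^3}-R_g-H^2-|A|^2-2\langle\nabla_M\mu_\cdot,\nu\rangle)$ is non-negative; let $w>0$ be its first eigenfunction with eigenvalue $\lambda_1\ge 0$. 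Form the warped product $(\bar M^4,\bar g)=(\Sigma^3\times\mathbb{S}^1,\ g_{\Sigma^3}+w^2dt^2)$, which is automatically spin. A direct computation, using $H_{\Sigma^3}=\mu_\cdot$, $|\nabla\phi|<1$ and the ODE satisfied by $\mu$, gives
\[
R_{\bar g}=R_g+H^2+|A|^2+2\lambda_1+2\langle\nabla_M\mu_\cdot,\nu\rangle\ \ge\ R_g-\Big(\tfrac43\mu^2+2\mu'\Big)|\nabla\phi|^2+\tfrac13 H^2+\cdots\ >\ 6,
\]
where the cross terms are absorbed by completing the square in $H$ exactly as in \cite{ZJT2020,Ce_2023}; the point is that the deficit $R_g-6\ge 2\delta_0$ exactly compensates the contribution $\tfrac43\mu^2+2\mu'\le 2\delta_0$, and strictness of $R_g>6$ makes the final inequality strict.

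Finally, I would build a contradiction with the compact rigidity statement. Since $(P_1\circ f|_{\Sigma^3},\mathrm{id}):\Sigma^3\times\mathbb{S}^1\to\mathbb{S}^3\times\mathbb{S}^1$ has non-zero degree and is $1$-Lipschitz on the $\Sigma^3$ factor (and an isometry on the $\mathbb{S}^1$ factor after rescaling, which only decreases the Lipschitz constant), Theorem~\ref{comprigidity1} applied to $(\bar M^4,\bar g)$ forces $\bar g$ to be locally isometric to $\mathbb{S}^3\times\mathbb{S}^1$, in particular $R_{\bar g}\equiv 6$, contradicting $R_{\bar g}>6$. (Alternatively one invokes Proposition~\ref{Prop-rigid} directly, since $\bar M^4=\Sigma^3\times\mathbb{T}^1$ is a warped product of the required form and $\bar g$ has scalar curvature $\ge 6$ but not $\equiv 6$, which is impossible.) The main obstacle is the middle step: arranging the weight function $\mu$ together with the lapse $w$ so that the curvature terms coming from the $\mu$-bubble stability, the warping by $w$, and the Lipschitz bound $|\nabla\phi|<1$ all combine to keep $R_{\bar g}$ strictly above $6$ — this is where the precise form of the ODE for $\mu$ and the completion-of-the-square estimate must be handled carefully, and where the finiteness of the band width $T$ (hence the necessity of choosing $T$ large, using only that $\phi$ is proper) enters.
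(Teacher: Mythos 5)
Your high-level strategy matches the paper's exactly: argue by contradiction, pick a separating hypersurface $\Sigma$ and a proper Lipschitz function $\phi$ as in Lemma~\ref{Lem-function-phi}, restrict attention to a band, produce a $\mu$-bubble $\bar\Sigma$, pass to the warped product $\bar\Sigma\times\mathbb{S}^1$ via the first Jacobi eigenfunction, and derive a contradiction from Proposition~\ref{Prop-rigid}. That is indeed the proof in the paper (itself modeled on Cecchini--R\"ade--Zeidler). However, the technical heart of the argument — the construction of $\mu$ — is where your sketch has concrete problems.

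First, the differential inequality you impose on $\mu$ is in the wrong direction. From the stability inequality one obtains
\[
R_{\bar g}\ \geq\ R_g \;+\; H^2+|A|^2 \;+\; 2\langle\nabla_M(\mu\circ\phi),\nu\rangle \;+\;2\lambda_1
\ \geq\ R_g+\tfrac{4}{3}\mu^2+2\mu',
\]
using $H=\mu\circ\phi$, $|A|^2\geq H^2/3$, $\mu'\leq 0$ and $|\nabla\phi|<1$. To conclude $R_{\bar g}>6$ one therefore needs a \emph{lower} bound $\tfrac43\mu^2+2\mu'\geq -c$ with $c$ small compared with the excess $R_g-6$; the upper bound $\tfrac43\mu^2+2\mu'\leq 2\delta_0$ you wrote is useless here (and the displayed estimate with the factor $-(\tfrac43\mu^2+2\mu')|\nabla\phi|^2$ does not follow from the second variation formula in Section~\ref{Sec-minimal-surface}).

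Second, and more importantly, your plan to use a single ``$\tan$''-type $\mu$ on the band $(-T,T)$ and take $T$ large does not work when $\inf_M R_g=6$. The model function satisfying $\tfrac43\mu^2+2\mu'\geq -12\kappa$ with $\mu$ decreasing from $+\infty$ to $-\infty$ blows up on an interval of length exactly $\pi/(2\sqrt\kappa)$, so $\kappa\sim 1/T^2$; but the compensating excess available on $\phi^{-1}([-T,T])$ is $\delta_K(T):=\min_{\phi^{-1}([-T,T])}R_g-6$, which may decay faster than $1/T^2$. This is precisely why the paper does \emph{not} use a single profile: it patches three pieces, $h_1(t)=\tfrac{3}{2(t+a)}$ and $h_3(t)=-\tfrac{3}{2(b-t)}$ on the outer parts (which satisfy $\tfrac43 h^2+2h'=0$ exactly, hence consume no scalar-curvature excess and can be stretched over an arbitrarily long band with blow-up at the ends), and the tangent profile $h_2=-3\sqrt\kappa\tan(2\sqrt\kappa t)$ only on a small fixed interval $[-\epsilon_2,\epsilon_2]\subset(-T,T)$ in order to cross zero. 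This decouples the band width from $\kappa$, so one may fix $T$, set $\delta_K=\delta_K(T)>0$, and then take $\kappa<\delta_K/24$ freely. Without this three-piece construction (or an equivalent device) the band-width/$\delta$ tension you yourself flagged as ``the main obstacle'' is not resolved, so the proposal as written has a genuine gap.
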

\begin{proof}
We prove by contradiction. Suppose there exists a such map $f$. Since $f$ is a non-zero degree map, there exists an orientable closed hypersurface $\Sigma$ such that $(P_{1}\circ f)_{\ast}[\Sigma]=[\mathbb{S}^{3}]$, where $[\Sigma]$ is non-zero in homology group and $P_{1}\circ f|_{\Sigma}:\Sigma\to \mathbb{S}^{3}$ is a non-zero degree map.
For this $\Sigma$, we can use Lemma \ref{Lem-function-phi} to construct $\phi$ satisfying $\Sigma=\phi^{-1}(0)$.

Firstly, we fix a large compact set $K=\phi^{-1}([-T,T])$ such that $R_{g}\geq 6+\delta_{K}$ and $\frac{\delta_{K}}{2}>12\kappa $ for some $T>0$ and small $\kappa>0$. Then we construct a function $h_{\kappa}$ as the function in the process of proof of Corollary 3.6 in \cite{Ce_2023}. For some positive constants $\epsilon_{1}$, $\epsilon_{2}<\frac{\pi}{4\sqrt{\kappa}}$, $\epsilon_{3}$, $\delta_{1}$, $\delta_{2}$ and $a,b>0$, let
    \begin{eqnarray}
        h_{\kappa}&=\begin{cases}
            h_{1}=\frac{3}{2(t+a)}, \mbox{ if } t\in [-a+\delta_{1}, -\epsilon_{1}] \\
            h_{2}=-3\sqrt{\kappa}\tan\left({2\sqrt{\kappa}t}\right), \mbox{ if } t\in \left[-\epsilon_{2},\epsilon_{2}\right] \\
            h_{3}=-\frac{3}{2(b-t)} \mbox{ if } t\in [\epsilon_{3},b-\delta_{2}].
        \end{cases}
    \end{eqnarray}

    Since $h_{1}(t)$ can be arbitrarily large  when $t$ is sufficiently close to  $-a$,  and can be arbitrarily close to $\frac{3}{2a}$ when $|t|$ is small enough, we can choose $a>T>\epsilon_{1}>\epsilon_{2}>0$ such that $h_{1}(-\epsilon_{1})>h_{2}(-\epsilon_{2})$ and choose $\delta_{1}>0$ such that $h_{1}(-a+\delta_{1})> H_{\{(h_{1}\circ\phi)^{-1}(-a+\delta_{1})\}}$ and $a-\delta_1+\epsilon>T$ for some small $\epsilon>0$,  where $H_{\{(h_{1}\circ\phi)^{-1}(t)\}}$ denotes the mean curvature of the hypersurface $(h_{1}\circ\phi)^{-1}(t)$  with respect to the outward unit normal vector.

    Similarly, we can choose  $0<\epsilon_{2}<\epsilon_{3}<T<b$ such that $h_{3}(\epsilon_{3})>h_{2}(\epsilon_{2})$ and choose $\delta_{2}>0$ such that $h_{3}(b-\delta_{2})< H_{\{(h_{3}\circ\phi)^{-1}(b-\delta_{2})\}}$ and $b-\delta_{2}+\epsilon>T$. Then we can use the function $\hat{h}$ constructed in Lemma 6.1 in \cite{Ce_2023} to smooth $h_{\kappa}$  and then get
    $\mu_{\kappa}$ satisfying
    \[\mu_{\kappa}(-a+\delta_{1}-\epsilon)>H_{\{(\mu\circ \phi)^{-1}(-a+\delta_{1}-\epsilon)\}},\quad \mu_{\kappa}(b-\delta_{2}+\epsilon)<H_{\{(\mu\circ \phi)^{-1}(b-\delta_{2}+\epsilon)\}}\]
    and
    \begin{eqnarray}
    \begin{cases}
        -\frac{4}{3}\mu^2_{\kappa}(t)-2\mu_{\kappa}^{'}(t)\leq 12\kappa ,\quad \mbox{ if } t\in (-T,T) ;\\
        -\frac{4}{3}\mu^2_{\kappa}(t)-2\mu_{\kappa}^{'}(t)\leq 0,\quad \mbox{ if } t\in [-a+\delta_{1}-\epsilon,-T]\cup [T,b-\delta_{2}+\epsilon];\\
      \mu_{\kappa}^{'}\leq 0 , \quad \mbox{ if } t\in [-a+\delta_{1}-\epsilon,b-\delta_{2}+\epsilon].
    \end{cases}
    \end{eqnarray}

Now we consider the functional
\[\mathcal{B}^{\kappa}(\Omega)=\mathcal{H}^{3}(\partial^{\ast} \Omega)-\int_{M}(\chi_{\Omega}-\chi_{\Omega_{0}})\mu_{\kappa}\circ \phi d\mathcal{H}^{4},\]
defined on
\[\mathcal{C}_{\kappa}=\{\mbox{Caccipoli set }\Omega\subset M|\Omega \triangle \Omega_{0}\Subset \phi^{-1}\left(\left(-a+\delta_{1}-\epsilon,b-\delta_{2}+\epsilon\right)\right)\}.\]
   According to Lemma \ref{Lem-existence}, there is a smooth minimizer $\bar{\Omega}$ in $\mathcal{C}_{\kappa}$ for the functional $\mathcal{B}^{\kappa}$.
   Let $\Bar{\Sigma}=\partial \Bar{\Omega}$, then $\Bar{\Sigma}$ is homologous to $\Sigma$. Therefore the map $f_{1}=P_{1}\circ f_{\Bar{\Sigma}}:\Bar{\Sigma}\to \mathbb{S}^{3}$ has non-zero degree and is $1$-Lipschitz. By the stability condition of the $\mu_{\kappa}$-bubble, we have
   \begin{eqnarray*}
     -\Delta_{\Bar{\Sigma}}u+\frac{1}{2}R_{\Bar{\Sigma}}u&\geq& \frac{1}{2}( R_{g}u+\left(\frac{4}{3}\mu_{\kappa}^2-2|\mu'_{\kappa}||\nabla \phi|\right)u)\\
     &\geq& \frac{1}{2}(R_{g}u+\left(\frac{4}{3}\mu_{\kappa}^2+2\mu'_{\kappa}\right)u)\\
     &>& 3 u.
   \end{eqnarray*}
   Thus, on $(\Bar{\Sigma}\times \mathbb{S}^{1})$ endowed with the metric $\bar{g}=g_{\Bar{\Sigma}}+u^2d\theta^2$, we have $R_{\Bar{g}}>6$ and the map $(f_{1},id):\Bar{\Sigma}\times\mathbb{T}^{1}\to \mathbb{S}^{3}\times \mathbb{S}^{1}$ has non-zero degree, with $f_{1}$ being $1$-Lipschitz. Combining with Proposition \ref{Prop-rigid}, we get a contradiction. This implies that there is no such map $f$.
 \end{proof}
%%%%%%%%%%%%%%%%%%%%%%%%%%%%%%%%%%%%%%%%%%%%%%%
\subsection{Proof of Theorem \ref{noncompactrigidity3}}
In this subsection, we focus on the noncompact case with higher codimension and prove Theorem \ref{noncompactrigidity3}. Since the Example \ref{example1} provides a counterexample to both the rigidity and the nonexistence, we only consider the nonexistence of manifolds with larger uniformly lower bound of scalar curvature here.

\begin{theorem}%\label{noncompactrigidity3}
For any $\delta>0$, let $(M^n, g)$, where $5\leq n\leq 7$, be a  noncompact orientable connected complete Riemannian manifold with scalar curvature $R_g\geq 6+\delta $.  Then there is no proper and $1$-Lipschitz  map  $f:M^n\to \mathbb{S}^3\times\mathbb{R}^{n-3}$ with non-zero degree.
\end{theorem}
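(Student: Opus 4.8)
The plan is to reduce the higher-dimensional statement to Theorem \ref{noncompactrigidity2} (the case $n=4$) via a dimension-reduction argument combined with the toric band trick already outlined in the introduction. First I would exploit the product structure of the target: in $\mathbb{R}^{n-3}$ there is a flat torus band $\mathbb{T}=\mathbb{T}^{n-4}\times[-L,L]$ of arbitrarily large width $L$ (take a fundamental domain of a lattice in the first $n-4$ coordinates times a long interval in the last one, or more simply an embedded band of large width and length inside $\mathbb{R}^{n-3}$ whose double cover is a torus cross interval). Since $f:M^n\to \mathbb{S}^3\times\mathbb{R}^{n-3}$ is proper with non-zero degree, I would choose a regular value so that $f^{-1}(\mathbb{S}^3\times \mathbb{T})$ contains a connected component $\mathbb{L}$ which is a compact band in $M^n$ (with $\partial\mathbb{L}=\partial_-\mathbb{L}\cup\partial_+\mathbb{L}$ mapping into $\mathbb{S}^3\times\mathbb{T}^{n-4}\times\{-L\}$ and $\{+L\}$ respectively) and such that the restriction $f|_{\mathbb{L}}:(\mathbb{L},\partial_\pm\mathbb{L})\to(\mathbb{S}^3\times\mathbb{T},\partial_\pm)$ still has non-zero degree; this is the standard "band component of non-zero relative degree" selection (cf. Gromov–Lawson, Cecchini–Zeidler). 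The width $L$ can be taken as large as we like.

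Next, on the band $\mathbb{L}$ I would run the $\mu$-bubble construction exactly as in the proof of Theorem \ref{noncompactrigidity2}: using the signed-distance-type function $\phi:\mathbb{L}\to[-L',L']$ coming from the $\mathbb{R}$-factor direction of the band (composition of $f$ with the projection to the interval coordinate), and a warping function $\mu_\kappa$ of the Cecchini–Räde–Zeidler type (Lemma 6.1 and Corollary 3.6 in \cite{Ce_2023}), I obtain — for $L$ large relative to $\delta$ and $\kappa$ small relative to $\delta$ — a smooth $\mu_\kappa$-bubble $\bar\Sigma^3\subset \mathbb{L}$ homologous to a slice, so that $P_1\circ f|_{\bar\Sigma}:\bar\Sigma\to\mathbb{S}^3$ has non-zero degree and is $1$-Lipschitz. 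The stability inequality for the $\mu_\kappa$-bubble, combined with the scalar curvature bound $R_g\ge 6+\delta$ and the differential inequalities satisfied by $\mu_\kappa$ (namely $-\tfrac{4}{3}\mu_\kappa^2-2\mu_\kappa'\le 12\kappa<\tfrac{\delta}{2}$ on the relevant interval), yields on $\bar\Sigma\times\mathbb{S}^1$ with the warped metric $\bar g=g_{\bar\Sigma}+u^2 d\theta^2$ (where $u>0$ is the first eigenfunction of the Jacobi operator) the strict bound $R_{\bar g}>6$. Since $(f_1,\mathrm{id}):\bar\Sigma\times\mathbb{S}^1\to\mathbb{S}^3\times\mathbb{S}^1$ has non-zero degree with $f_1$ being $1$-Lipschitz, applying Proposition \ref{Prop-rigid} (or directly Theorem \ref{comprigidity1} in dimension $4$) forces $R_{\bar g}\equiv 6$, a contradiction. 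Hence no such map $f$ exists.

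For $5\le n\le 7$ one extra point must be addressed: the $\mu$-bubble produced in the band $\mathbb{L}\subset M^n$ has dimension $n-1\in\{4,5,6\}$, which is still $\le 7$ so regularity of the minimizer holds (this is exactly where the bound $n\le 7$ is used). But this $(n-1)$-dimensional $\mu$-bubble is not yet $3$-dimensional, so I would iterate: on $\bar\Sigma^{n-1}$ (warped with $\mathbb{S}^1$ to control scalar curvature) I again select a band component over a torus sub-band coming from the remaining $\mathbb{R}^{n-4}$ factor of the target and take a further $\mu$-bubble, repeating $n-4$ times in total until I arrive at a $3$-dimensional closed hypersurface $\Sigma^3$, warped-product with several circles $\mathbb{S}^1$, which carries scalar curvature $>6$ (or $\ge 6$ with strictness somewhere) and admits a $1$-Lipschitz non-zero-degree map to $\mathbb{S}^3\times\mathbb{T}^{n-3}$; this is precisely the setup of Proposition \ref{Prop-rigid}, and it contradicts strictness. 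Throughout the iteration I only need to verify that the $1$-Lipschitz property of the $\mathbb{S}^3$-component is preserved when restricting to each successive hypersurface — which it is, since restriction only decreases the operator norm of $df$ — exactly as in the remark following the proof of Theorem \ref{comprigidity1} and as in \cite{Zhu2020}.

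\textbf{Main obstacle.} The technical heart of the argument is the bookkeeping in the band-selection and iteration step: one must guarantee at each stage that (i) some connected component of the preimage of the torus sub-band is a \emph{compact} band carrying non-zero relative degree onto $\mathbb{S}^3\times(\text{torus})$, and (ii) that after warping with $\mathbb{S}^1$ and choosing $L$ large and $\kappa$ small, the accumulated scalar-curvature defect $\delta$ is not exhausted — i.e. the differential inequalities for the successive warping functions $\mu_\kappa$ can be arranged with total "loss" $<\delta$. Making the choices of band widths $L$ and constants $\kappa,\epsilon_i,\delta_i$ uniform enough to survive $n-4$ iterations, while keeping all intermediate hypersurfaces in the allowable dimension range for regularity, is the delicate part; the geometry (stability inequality, Proposition \ref{Prop-rigid}) is already in hand.
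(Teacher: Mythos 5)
Your proposal follows the same overall route as the paper: select a band $\mathbb{L}\subset M^n$ over a torical band in the $\mathbb{R}^{n-3}$ factor, construct a $\mu$-bubble inside the band, warp the resulting hypersurface with a circle to control scalar curvature, and contradict Proposition~\ref{Prop-rigid}. Where you diverge --- and the source of your flagged delicate part --- is the plan to iterate the band-plus-$\mu$-bubble step $n-4$ more times to reduce all the way to a closed $3$-manifold. The paper performs exactly \emph{one} $\mu$-bubble inside $\mathbb{L}$, using the explicit barrier $\mu(\rho)=(n-1)a\cot(a\rho)$ with $a=\sqrt{\delta/(2n(n-1))}$ (simpler than the Cecchini--R\"{a}de--Zeidler smoothing, which is unnecessary here since $R_g\geq 6+\delta$ is assumed outright). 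It then observes that the resulting warped manifold $\bar\Sigma^{n-1}\times_u\mathbb{S}^1$ is a \emph{closed} $n$-manifold with $R>6$ admitting a non-zero degree map to $\mathbb{S}^3\times\mathbb{T}^{n-3}$ whose $\mathbb{S}^3$-part is $1$-Lipschitz, and invokes the already-proven compact Theorem~\ref{comprigidity1} to reach a contradiction. The dimension reduction from $n-1$ down to $3$ happens inside Theorem~\ref{comprigidity1} via area-minimizing hypersurfaces (Schoen--Yau torical symmetrization), which are free --- they consume no scalar curvature defect and require no band structure --- so there is no accumulated loss over stages and no uniformity to arrange. Your concern about exhausting $\delta$ over $n-4$ iterations is real for your plan but simply does not arise in the paper's. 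Your approach could probably be pushed through with care, but invoking the compact theorem after a single $\mu$-bubble is cleaner and sidesteps the hardest bookkeeping you identified.
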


\begin{proof}
	Let $\mathbb{T}$ be a torical band in $\mathbb{R}^{n-3}$ with sufficient large width. So there is a proper and $1$-Lipschitz map $\Phi$: $\mathbb{S}^3 \times\mathbb{T} \to \mathbb{S}^3 \times \mathbb{T}^{n-4}\times [-1,1]$ with non-zero degree. Let $\mathbb{L}$ be one of components of $f^{-1}( \mathbb{S}^3 \times\mathbb{T})$	such that $\partial \mathbb{L}:=\partial_+\mathbb{L} \cup \partial_-\mathbb{L}$ are smooth hypersurfaces, and $f: \mathbb{L}\to \mathbb{S}^3 \times\mathbb{T} $ has non-zero degree. Here we assume
	
	$$
	 \Phi\circ f(\partial_{\pm} \mathbb{L})=\mathbb{S}^3 \times \mathbb{T}^{n-4}\times \{\pm 1\}	,
	$$
and let
 $$P:\mathbb{S}^3 \times \mathbb{T}^{n-4}\times [-1,1]\to \mathbb{S}^3 \times \mathbb{T}^{n-4}\times \{-1\} ,
 $$	
be the standard projection (As Figure 2 shows.).
\begin{center}
\begin{tikzpicture}
\draw (2,2.8) parabola bend (5.5,3) (9,2.8);
\draw (2,1.8) parabola bend (5.5,1.6) (9,1.8);
\draw (2,2.8) arc (90:270:.1 and .5);
\draw [rotate around={180:(2,2.3)}] (2,2.8) arc (90:270:.1 and .5);
\draw [densely dashed] (9,2.8) arc (90:270:.1 and .5);
\draw [rotate around={180:(9,2.3)}] (9,2.8) arc (90:270:.1 and .5);
\node[left] (L) at (1,2.3) {$\mathbb{L}\subset M$};
%%%%%%%%%%%%%%%%%%%%%%%%%%%%%%%%%%%%%%%%%%%%%%%%%%%%%%%%%%%%%%%%%%%%%%%
\draw (2,0.3) parabola bend (5.5,0.5) (9,0.3);
\draw (2,-0.7) parabola bend (5.5,-0.8) (9,-0.7);
\draw (2,0.3) arc (90:270:.1 and .5);
\draw [rotate around={180:(2,-0.2)}] (2,0.3) arc (90:270:.1 and .5);
\draw [densely dashed] (9,0.3) arc (90:270:.1 and .5);
\draw [rotate around={180:(9,-0.2)}] (9,0.3) arc (90:270:.1 and .5);
\node[left] (L) at (1,-0.2) {$\mathbb{S}^{3}\times \mathbb{T}\subset \mathbb{S}^{3}\times\mathbb{R}^{n-3}$};
%%%%%%%%%%%%%%%%%%%%%%%%%%%%%%%%%%%%%%%%%%%%%%%%%%%%%%%%%%%%%%%%%%%%%%%
\draw (2,-1.8) parabola bend (5.5,-1.8) (9,-1.8);
\draw (2,-2.8) parabola bend (5.5,-2.8) (9,-2.8);
\draw (2,-1.8) arc (90:270:.1 and .5);
\draw [rotate around={180:(2,-2.3)}] (2,-1.8) arc (90:270:.1 and .5);
\draw [densely dashed] (9,-1.8) arc (90:270:.1 and .5);
\draw [rotate around={180:(9,-2.3)}] (9,-1.8) arc (90:270:.1 and .5);
\node[left] (L) at (1,-2.3) {$\mathbb{S}^{3}\times \mathbb{T}^{n-4}\times [-1,1]$};
%%%%%%%%%%%%%%%%%%%%%%%%%%%%%%%%%%%%%%%%%%%%%%%%%%%%%%%%%%%%%%%%%%%%%%%
\draw (2,-3.5) arc (90:270:.1 and .5);
\draw [rotate around={180:(2,-4)}] (2,-3.5) arc (90:270:.1 and .5);
\node[left] (L) at (1,-4) {$\mathbb{S}^{3}\times \mathbb{T}^{n-4}\times \{-1\}$};
%%%%%%%%%%%%%%%%%%%%%%%%%%%%%%%%%%%%%%%%%%%%%%%%%%%%%%%%%%%%%%%%%%%%%%%
\draw [->] (5.5,1.5)--(5.5,0.6);
\node[left] at (5.5,1) {$f$};
\draw [->] (5.5,-0.9)--(5.5,-1.7);
\node[left] at (5.5,-1.3) {$\Phi$};
\draw [->] (2,-2.9)--(2,-3.4);
\node[right] at (2,-3.2) {$P$};
\node[left] at (4.5,-4.5) {Figure 2};
%%%%%%%%%%%%%%%%%%%%%%%%%%%%%%%%%%%%%%%%%%%%%%%%%%%%%%%%%%%%%%%%%%%%%%%
\end{tikzpicture}
\end{center}

 It is straightforward to observe that $\Phi\circ f:\mathbb{L}\to \mathbb{S}^3 \times \mathbb{T}^{n-4}\times [-1,1]$ is a proper and $1$-Lipschitz  map  with non-zero degree.
Let $a=\sqrt{\frac{\delta}{2n(n-1)}}$, and define
$$
\mu(\rho)=(n-1)a \cot (a\rho),
$$
where $\rho$ is a smooth modification of the distance function to $\partial_-\mathbb{L}$  in $(M^n, g)$ satisfying $|\nabla \rho|\leq 1$. Here we have already assume that the width of $\mathbb{L}$ is bigger than $a^{-1}\pi$. With the function $\mu: \mathbb{L}\mapsto \mathbb{R}$ defined, the remaining steps align with the proof of Theorem \ref{noncompactrigidity2}. Consequently, we complete the proof of Theorem \ref{noncompactrigidity3}.
\end{proof}

\subsection{Proof of Theorem \ref{noncompactrigidity4}}
In this subsection, we focus on rigidity of compact perturbations of $ \mathbb{S}^3\times\mathbb{R}^{n-3}$ with $R_g\geq 6$, and prove Theorem \ref{noncompactrigidity4}. With the isometry at the infinity, this case is finally reduced to Theorem \ref{comprigidity1}.
\begin{theorem}%\label{noncompactrigidity4}
Let $(M^n, g)$, where $4\leq n\leq 7$,  be a  noncompact orientable connected and complete Riemannian manifold with scalar curvature $R_g\geq 6 $. Let $f:M^n\to \mathbb{S}^3\times\mathbb{R}^{n-3}$ be a proper and $1$-Lipschitz  map with non-zero degree. Then $(M^n, g)$ is  isometric to $\mathbb{S}^3\times\mathbb{R}^{n-3}$ provided $f$ is isometric outside a compact domain of $M^n$.
\end{theorem}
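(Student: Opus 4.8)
The plan is to compactify the noncompact ends of $M^n$ so that the hypothesis of Theorem \ref{comprigidity1} applies. Since $f$ is isometric outside a compact domain $\Omega\Subset M^n$, the complement $M^n\setminus\Omega$ is a disjoint union of ends, each of which is isometric via $f$ to a corresponding piece of $\mathbb{S}^3\times\mathbb{R}^{n-3}$ lying over the complement of a large ball in $\mathbb{R}^{n-3}$. In particular, each end is isometric to $\mathbb{S}^3\times(\mathbb{R}^{n-3}\setminus B_R)$ for some $R>0$. The first step is to observe that $\mathbb{R}^{n-3}\setminus B_R$ admits a well-understood compactification: we may glue in a large flat torus region, or more directly, cap off the end. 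Concretely, I would mimic the end-compactification used in proofs of the Positive Mass Theorem (as the authors suggest): outside an even larger radius $R'>R$, replace the Euclidean metric on $\mathbb{R}^{n-3}\setminus B_{R'}$ by a metric that closes up smoothly to a compact manifold while keeping scalar curvature controlled. Since $\mathbb{S}^3$ already contributes scalar curvature $6$ and the flat $\mathbb{R}^{n-3}$ factor contributes $0$, the product metric has scalar curvature exactly $6$ on the end, and any $C^0$-small, compactly supported perturbation of the $\mathbb{R}^{n-3}$-factor metric changes the scalar curvature by an arbitrarily small amount; but we need $R_g\ge 6$ to be preserved exactly, so instead I would glue in a \emph{compact} piece on which the metric is a product $\mathbb{S}^3\times (\text{compact flat-like piece})$ and argue the gluing region can be taken with scalar curvature $\ge 6$ by a bending construction in the $\mathbb{R}^{n-3}$-directions alone (the Gromov–Lawson / Schoen–Yau bending, which only ever decreases scalar curvature in a controlled, correctable way, or is exactly neutral when the base is flat and we bend toward a torus).

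The cleanest route, which I would pursue first, is this: each end of $M^n$ is \emph{exactly} isometric to $\mathbb{S}^3\times(\mathbb{R}^{n-3}\setminus B_R)$. Pick $R'\gg R$ and on each end replace $\mathbb{R}^{n-3}\setminus B_{R'}$ by one half of a large flat torus $\mathbb{T}^{n-3}_L$ with $L$ large; the transition region $B_{2R'}\setminus B_{R'}$ in $\mathbb{R}^{n-3}$ can be smoothly matched to a neck in $\mathbb{T}^{n-3}_L$ because both are flat away from the bend, and the bend can be made with nonnegative scalar curvature contribution (one is deforming a flat annulus to a flat cylinder-then-closing-torus, which costs nothing to first order and can be arranged to have $R\ge 0$ of the base, hence $R_g\ge 6$ of the product, by a standard one-dimensional-profile computation as in Lemma \ref{Lem-function-h}-type constructions). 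Carrying this out on every end produces a \emph{closed}, orientable, connected Riemannian manifold $(\widetilde M^n,\widetilde g)$ with $R_{\widetilde g}\ge 6$, together with a map $\widetilde f:\widetilde M^n\to \mathbb{S}^3\times\mathbb{T}^{n-3}$ obtained by composing $f$ on the compact core with the obvious isometric/projection maps on the glued-in tori; this $\widetilde f$ is $1$-Lipschitz and has nonzero degree because $f$ is proper of nonzero degree and the gluing was degree-preserving on each end.

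Next I would apply Theorem \ref{comprigidity1} to $(\widetilde M^n,\widetilde g,\widetilde f)$ to conclude $(\widetilde M^n,\widetilde g)$ is locally isometric to $\mathbb{S}^3\times\mathbb{T}^{n-3}$, and in particular the compact core $\Omega\subset M^n$ — which sits inside $\widetilde M^n$ unchanged — is locally isometric to $\mathbb{S}^3\times\mathbb{T}^{n-3}$. Hence $(M^n,g)$ is everywhere locally isometric to $\mathbb{S}^3\times\mathbb{R}^{n-3}$: on the ends by hypothesis, on the core by the above. To upgrade local isometry to global isometry $(M^n,g)\cong\mathbb{S}^3\times\mathbb{R}^{n-3}$, I would note that a local isometry to $\mathbb{S}^3\times\mathbb{R}^{n-3}$ means $g$ is locally a product; examining the foliation by the $\mathbb{S}^3$-factors (as in the proof of Theorem \ref{comprigidity1}, where the totally geodesic $\mathbb{S}^3$ slices extend and the splitting $\tilde\Phi:\Sigma\times\mathbb{R}\to M$ is produced) shows $(M^n,g)$ is a flat $\mathbb{R}^{n-3}$-bundle of round $3$-spheres; completeness together with the isometric structure at infinity (which pins down the holonomy and the bundle to be trivial product on each end) forces the global product. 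Alternatively, the nonzero-degree proper map $f$ into the simply-connected-in-the-$\mathbb{R}^{n-3}$-direction target, combined with local triviality, directly forces $M^n\cong\mathbb{S}^3\times\mathbb{R}^{n-3}$ as smooth manifolds with the product metric.

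The main obstacle I expect is the gluing/capping step: one must close off each Euclidean-product end to a \emph{compact} manifold while maintaining $R_g\ge 6$ \emph{exactly}, not just approximately. Small perturbations generically violate the sharp bound $R\ge 6$ on a set of positive measure. The resolution is that the $\mathbb{S}^3$ factor is untouched and one only bends in the flat $\mathbb{R}^{n-3}$ base, where one has the freedom to run a one-dimensional profile (radial) construction keeping the base scalar curvature $\ge 0$ — for a warped/rotationally-symmetric flat-to-torus transition this is an ODE inequality that can genuinely be solved with equality-or-better, because we are deforming within the class of nonnegatively-curved flat manifolds; this is exactly the kind of profile-function argument already invoked via Zhu's functions in Lemma \ref{Lem-function-h} and the Cecchini–Räde–Zeidler construction cited in the proof of Theorem \ref{noncompactrigidity2}. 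Once the compactification is in hand with $R\ge 6$ preserved, everything else is a direct appeal to Theorem \ref{comprigidity1} plus the splitting-type arguments already developed in the paper.
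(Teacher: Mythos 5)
Your overall reduction --- compactify the end, apply Theorem \ref{comprigidity1}, then upgrade to a global isometry via a splitting-type argument --- matches the paper's strategy. The gap is in how you carry out the compactification.

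The paper does \emph{not} cap off, bend, or smoothly transition the end. Because $f$ is a genuine isometry outside $K$, the end of $(M^n,g)$ is \emph{exactly} $\mathbb{S}^3\times\bigl(\mathbb{R}^{n-3}\setminus\text{compact}\bigr)$ with the flat product metric. The paper chooses $K_1=f^{-1}\bigl(\mathbb{S}^3\times[-a,a]^{n-3}\bigr)$ with $a$ large enough that $K$ lies in its interior, so $\partial K_1=\mathbb{S}^3\times\partial([-a,a]^{n-3})$ carries the exact flat product metric, and then \emph{identifies the opposite faces of the cube by translations}. This is a quotient by isometries of the flat factor: the metric descends unchanged, $R_{\bar g}\ge 6$ holds identically (no transition region exists), the result $\bar M$ is a smooth closed manifold, and $\bar f:\bar M\to\mathbb{S}^3\times\mathbb{T}^{n-3}$ is still $1$-Lipschitz of non-zero degree. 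Then Theorem \ref{comprigidity1} gives local isometry to $\mathbb{S}^3\times\mathbb{T}^{n-3}$, hence nonnegative Ricci on $K_1$; combined with the exact product structure on the end this gives nonnegative Ricci globally, and the Cheeger--Gromoll splitting theorem (applied to the $n-3$ independent lines coming from the Euclidean factor at infinity) yields $(M^n,g)\cong\mathbb{S}^3\times\mathbb{R}^{n-3}$.

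Your proposed gluing of $\mathbb{R}^{n-3}\setminus B_{R'}$ to a flat-torus-minus-a-disk is the step that would fail. Those two flat pieces do not match smoothly along the gluing sphere: the distance spheres in $\mathbb{R}^{n-3}$ have second fundamental form $\tfrac1r g_{\mathbb{S}^{n-4}}$, whereas the boundary of a small disk removed from a flat torus is nearly totally geodesic (or again round but with the curvature ``pointing the other way''), so any smooth interpolation has a transition region in which the scalar curvature of the base dips \emph{below} zero on a set of positive measure. A one-dimensional (rotationally symmetric) profile cannot fix this with equality $R\ge 0$; that is precisely the well-known subtlety in Positive Mass Theorem compactifications, which is why the classical PMT proof follows the gluing by a further conformal deformation and only needs $R\ge 0$ in a weaker, asymptotic sense --- not the sharp pointwise bound $R_g\ge 6$ your argument requires. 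Moreover, for $n=4$ the base is $\mathbb{R}$ and the end is a pair of rays; ``cap each ray with half a flat torus'' has no meaning there, whereas the paper's cube identification ($[-a,a]$ with endpoints glued to form $\mathbb{S}^1$) handles every $n$ uniformly. In short: you correctly identified the need to compactify, but the correct mechanism is a quotient by isometries (face identification of a flat cube), which changes nothing metrically, rather than any kind of bending or capping, which necessarily does.
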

\begin{proof}
    Since $f$ is isometric outside a compact domain $K$, we can choose a larger compact set $K_{1}\subset M$ such that $\partial K_{1}=\mathbb{S}^{3}\times (\partial [-a,a]^{n-3})$ for sufficiently large $a$.  By gluing the opposite sides of $[-a,a]^{n-3}$, we obtain a new compact manifold $\Bar{M}$ and a map $\bar{f}:\bar{M}\to \mathbb{S}^{3}\times \mathbb{T}^{n-3}$ with non-zero degree and $1$-Lipschitz property. According to Theorem \ref{comprigidity1}, $\Bar{M}$ is local isometric to $\mathbb{S}^{3}\times \mathbb{T}^{n}$. Then, in the compact set $K_{1}$, the scalar curvature is $6$ and the Ricci curvature is nonnegative. Consequently, $M$ has non-negative Ricci curvature. Finally, invoking the splitting theorem, we conclude that $(M^n, g)$ is isometric to $\mathbb{S}^3 \times \mathbb{R}^{n-3}$ due to the existence of $n-3$ geodesic lines with distinct directions.
\end{proof}
%    Text of article.

%    Bibliographies can be prepared with BibTeX using amsplain,
%    amsalpha, or (for "historical" overviews) natbib style.

%%%%%%%%%%%%%%%%%%%%%%%%%
\bibliographystyle{amsplain}
\bibliography{mybib.bib}
%    Insert the bibliography data here.

\end{document}